\newlist{assertions}{enumerate}{3}
\setlist[assertions,1]{label=\textbf{($\mathcal{A}$\arabic*)}, ref=\textbf{($\mathcal{A}$\arabic*)}}
\setlist[assertions,2]{label=\textbf{(\alph*)},
                   ref=\theenumi\textbf{(\alph*)}}
\setlist[assertions,3]{label=\bfseries(\roman*),
                   ref=\theenumii\textbf{.(\roman*)}}
\crefname{assertionsi}{}{}
\crefname{assertionsii}{}{}
\crefname{assertionsiii}{}{}
\crefname{equation}{}{}
\Crefname{equation}{Equation}{Equations}
\theoremstyle{plain}
\newtheorem{thm}{Theorem}[section]
\crefname{thm}{theorem}{theorems}
\Crefname{thm}{Theorem}{Theorems}
\newtheorem{lemma}[thm]{Lemma}
\crefname{lemma}{lemma}{lemmas}
\Crefname{lemma}{Lemma}{Lemmas}
\newtheorem{prop}[thm]{Proposition}
\crefname{prop}{proposition}{propositions}
\Crefname{prop}{Proposition}{Propositions}
\theoremstyle{definition}
\theoremstyle{remark}
\newtheorem*{remark}{Remark}
\numberwithin{equation}{section}
\DeclareMathAlphabet{\mathcal}{OMS}{cmsy}{m}{n}
\newcommand{\T}{\mathbb{T}}
\newcommand{\SL}{\operatorname{SL}}
\newcommand{\Z}{\mathbb{Z}}
\newcommand{\PR}{\widehat{\mathbb{R}}}
\newcommand{\R}{\mathbb{R}}
\newcommand{\Gr}{\operatorname{Gr}}
\newcommand{\E}{\mathcal{E}}
\newcommand{\UE}{\mathcal{E}^{\mathcal{U}}}
\newcommand{\CondUH}{(\mathcal{UH})}
\newcommand{\CondFirst}{(\mathcal{C}1)}
\newcommand{\CondSecond}{(\mathcal{C}2)}
\newcommand{\DC}{(\mathcal{DC})}
\newcommand{\deriv}{\partial_\theta}
\newcommand{\derivE}{\partial_E}
\newcommand{\attrPlain}{\psi^u}
\newcommand{\attr}[1]{\attrPlain(\theta_{#1})}
\newcommand{\repPlain}{\psi^s}
\newcommand{\rep}[1]{\repPlain(\theta_{#1})}
\title{Asymptotic laws for a class of quasi-periodic Schrödinger cocycles at the lowest energy of the spectrum}
\author{Thomas Ohlson Timoudas}
\begin{document}

\maketitle

\begin{abstract}
Let $(\omega, A_E)$ be a quasi-periodic Schrödinger cocycle, where $\omega$ is a Diophantine irrational. The potential is assumed to be $C^2$ with a unique non-degenerate minimum, and the coupling constant is assumed to be large.

We show that, as the energy approaches the lowest energy of the spectrum from below, the distance between the Oseledets-directions, in projective coordinates, is asymptotically linear. Moreover, we show that the $C^2$-norm of the Oseledets-directions, in projective coordinates, grows asymptotically (almost) like the inverse of the square root of the distance.

Both of these results confirm numerical observations.
\end{abstract}

\section{Introduction}

Consider a cocycle $A: \T \to \SL(2, \R)$ over an irational circle rotation, given by
\begin{align*}
(\theta, x) \mapsto (\theta + \omega, A(\theta)x),
\end{align*}
where $\omega$ is irrational. In this paper we shall consider the family of cocycles
\begin{align}
A_E(\theta) = \begin{pmatrix} 0 & 1 \\ -1 & \lambda v(\theta) - E \end{pmatrix},\label{SchrodingerCocycle}
\end{align}
where $E \in \R$ is a parameter (the energy), $\lambda > 0$ the coupling constant, and $v: \T \to \R$. The resulting system is called a {\em quasi-periodic Schrödinger cocycle}, due to its relation to the Schrödinger equation. For more information about this connection, we refer to \cite{Damanik2017Survey}. Set
\begin{align*}
A^n(\theta) = \begin{cases} A(\theta + (n-1)\omega) \cdots A(\theta) &n \geq 1, \\
Id &n = 0 \\ A(\theta - n\omega)^{-1} \cdots A(\theta - \omega)^{-1} &n \leq -1. \end{cases}
\end{align*}
For every $E$ we have an important quantity $L(E)$, the (top) Lyapunov exponent. For Lebesgue-a.e. $\theta \in \T$, it holds that
\begin{align*}
L(E) = \lim \limits_{n \to \infty} \frac1{n} \log \|A_E^n(\theta)\| \geq 0.
\end{align*}
We say that a cocycle $A$ is uniformly hyperbolic if there are two continuous functions $W^u, W^s: \T \to \Gr(1, \R^2)$ spanning the whole space ($W^u(\theta) \oplus W^s(\theta) = \R^2$), that are invariant
\begin{equation}
\begin{gathered}
A(\theta)W^s(\theta) = W^s(\theta + \omega), \text{ and}\\
A(\theta)W^u(\theta) = W^u(\theta + \omega),
\end{gathered}\label{InvarianceContinuousSplitting}
\end{equation}
and satisfy for some $c > 0, 0 < r < 1$ that
\begin{equation}
\begin{gathered}
\|A^n(\theta)v\| \leq cr^n \|v\|, \text{ for } v \in W^s(\theta), \text{ and}\\
\|A^{-n}(\theta)v\| \leq cr^n \|v\|, \text{ for } v \in W^u(\theta), \label{UniformHyperbolicityCondition}
\end{gathered}
\end{equation}
for every $n \geq 0$, and $\theta \in \T$. We call $W^u$ and $W^s$ the unstable and stable subspaces, respectively. Since they are continuous and span the whole of $\R^2$, it is clear that the minimum angle between the spaces is bounded away from 0:
\begin{align*}
\min \limits_{\theta \in \T} \angle(W^s(\theta), W^u(\theta)) > 0.
\end{align*}
In summary, if $L(E) > 0$ and we have such a continuous splitting, the cocycle is uniformly hyperbolic. In the case that $L(E) > 0$ but there is not such continuous splitting, the cocycle is called non-uniformly hyperbolic, the splitting is only measurable, and
\begin{align*}
\inf \limits_{\theta \in \T} \angle(W^s(\theta), W^u(\theta)) = 0.
\end{align*}
In this case, the constant $c$ in \cref{UniformHyperbolicityCondition}, will depend non-uniformly on $\theta$.

Naturally, we ask ourselves how a system can bifurcate from uniformly hyperbolic behaviour, to non-uniformly hyperbolic. In \cite{HaroLLaveHypBreakdown}, they numerically studied how the minimum distance and Lyapunov exponent behaves at the bifurcation point, bur for a different class of systems. Their findings were that
\begin{gather*}
\min \limits_{\theta \in \T} \angle(W_t^s(\theta), W_t^s(\theta)) \sim t - t_0, \text{ and}\\
L(t) - L(t_0) \sim (t - t_0)^\alpha, \text{ for some } \alpha < 0,
\end{gather*}
where $t$ is a parameter and the bifurcation happens at the critical parameter $t_0$. That is, the angle between the directions was observed to behave asymptotically linearly in the parameter, and the Lyapunov exponent according to some power law in the parameter.

Recently, the linear behaviour of the angle was verified for a certain class of systems, in \cite{BjerkHyperbolicity}. In a different setting, the distance between two invariant tori was shown to behave asymptotically linearly at the point of collision, in \cite{TimoudasQPL}.

More generally, we may ask how the directions of these subspaces (the curves given by their graphs) merge at the point of collision. In \cite[4.14]{HermanMinorer}, there is a discussion about this process. One of the problems given there, about minimal sets, was answered positively in the paper \cite{BjerkSchrLowEnergy}, for a class of Schrödinger cocycles. That result was later generalized to a larger class of systems (without linear structure) in \cite{FuhrmannMinimal}. We believe that the results in this paper should also be possible to generalize in the same way.

\section{Our results}

In this paper, we will assume that $v: \T \to \R$ is a $C^2$ function, having a unique non-degenerate minimum. We will consider the system
\begin{align}
A_E(\theta) = \begin{pmatrix} 0 & 1 \\ -1 & \lambda^2 v(\theta) - E \end{pmatrix},\label{SchrodingerCocycleSquare}
\end{align}
with the coupling constant $\lambda$ in \cref{SchrodingerCocycle} replaced by $\lambda^2$. Since we consider only positive coupling constants, this is no restriction.  This system is exactly the one considered in \cite{BjerkSchrLowEnergy}, where, using methods similar to the ones in \cite{Young_1997}, it was shown that $L(E) > 0$ uniformly for $E \in (-\infty, E_0]$, where $E_0$ is the lowest energy of the spectrum, provided $\lambda$ is large enough.

From now on, we will use projective coordinates $(1, r)$, and let $r$ represent the direction $(1, r)$, and $\infty$ the direction $(0, 1)$. Then $A_E$ gives us the projective cocycle
\begin{align*}
\Phi_E(\theta, r) = (\theta + \omega, \lambda^2 v(\theta) - E - 1/r).
\end{align*}
Note that, given $r$, it is possible to recover the expansion rate of the original system $A_E$, since
\begin{align*}
    A_E(\theta) \begin{pmatrix} 1 \\ r \end{pmatrix} = r \begin{pmatrix} 1 \\ \Phi_E(\theta, r) \end{pmatrix}.
\end{align*}
Given invariant subspaces $W^u_E$ and $W^s_E$ of the cocycle $A_E$ as above, we obtain directions that are invariant under $\Phi_E$. That is, \cref{InvarianceContinuousSplitting} gives us functions $r^u_E: \T \to \PR$ and $r^s_E: \T \to \PR$, where the projective line $\PR$ is simply the real line together with a point at infinity, that satisfy the invariance relations
\begin{gather*}
(\theta + \omega, r^u_E(\theta + \omega)) = \Phi_E(\theta, r^u_E(\theta)), \text{ and}\\
(\theta + \omega, r^s_E(\theta + \omega)) = \Phi_E(\theta, r^s_E(\theta)).
\end{gather*}
When $E < E_0$, where $E_0$ is the lowest energy of the spectrum of the corresponding Schrödinger operator, the graphs will satisfy $\frac1{C} < r^s_E < r^u_E < C$ for some $C$ uniformly in $\theta$ and $E < E_0$. This is explained and shown in \cite[4.8--4.14]{HermanMinorer}.

\begin{figure}[h]
    \centering
    \begin{subfigure}[t]{0.45\textwidth}
    	\includegraphics[width=\textwidth]{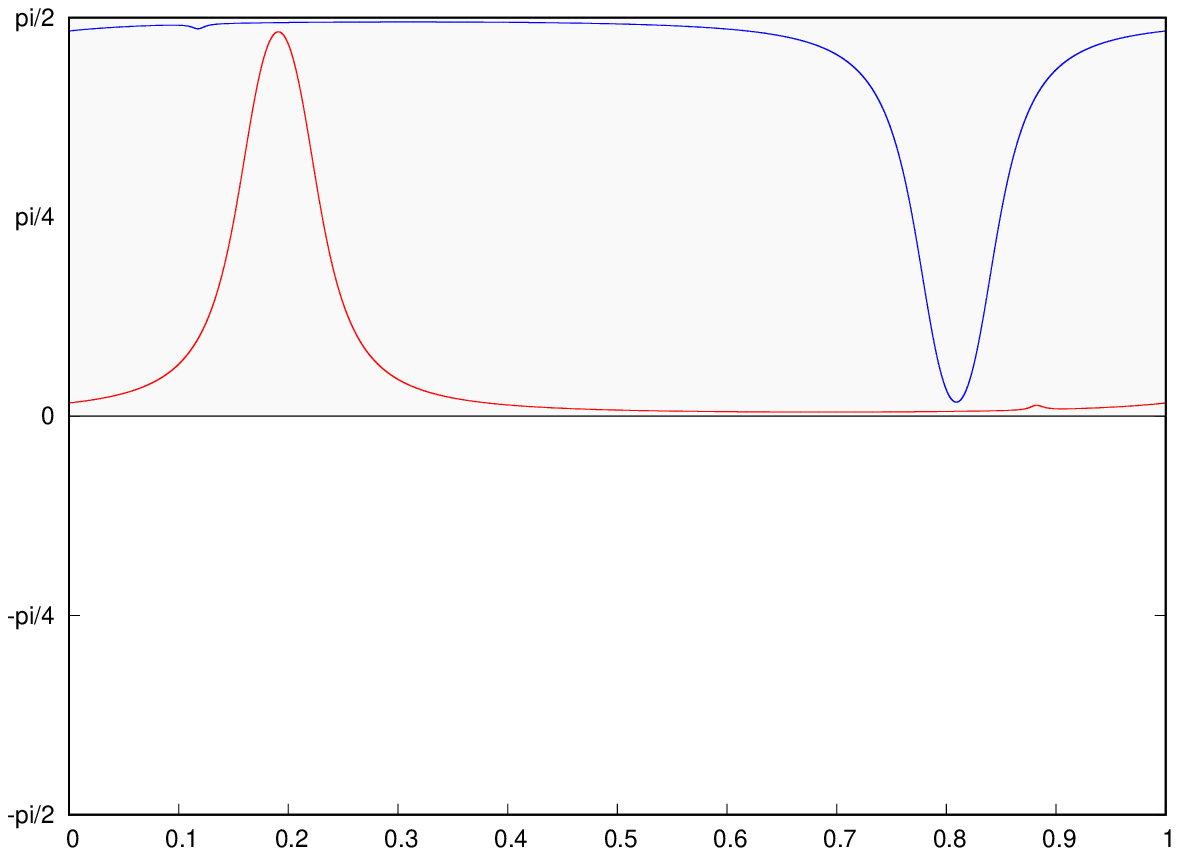}
    	\caption{Invariant cone $(0, \pi/2)$ (shaded gray) when $E < E_0$. Lower curve (red) is the stable direction, and the upper curve (blue) the unstable one.}\label{FigCone}
    \end{subfigure}
    \hfill
	\begin{subfigure}[t]{0.45\textwidth}
    	\includegraphics[width=\textwidth]{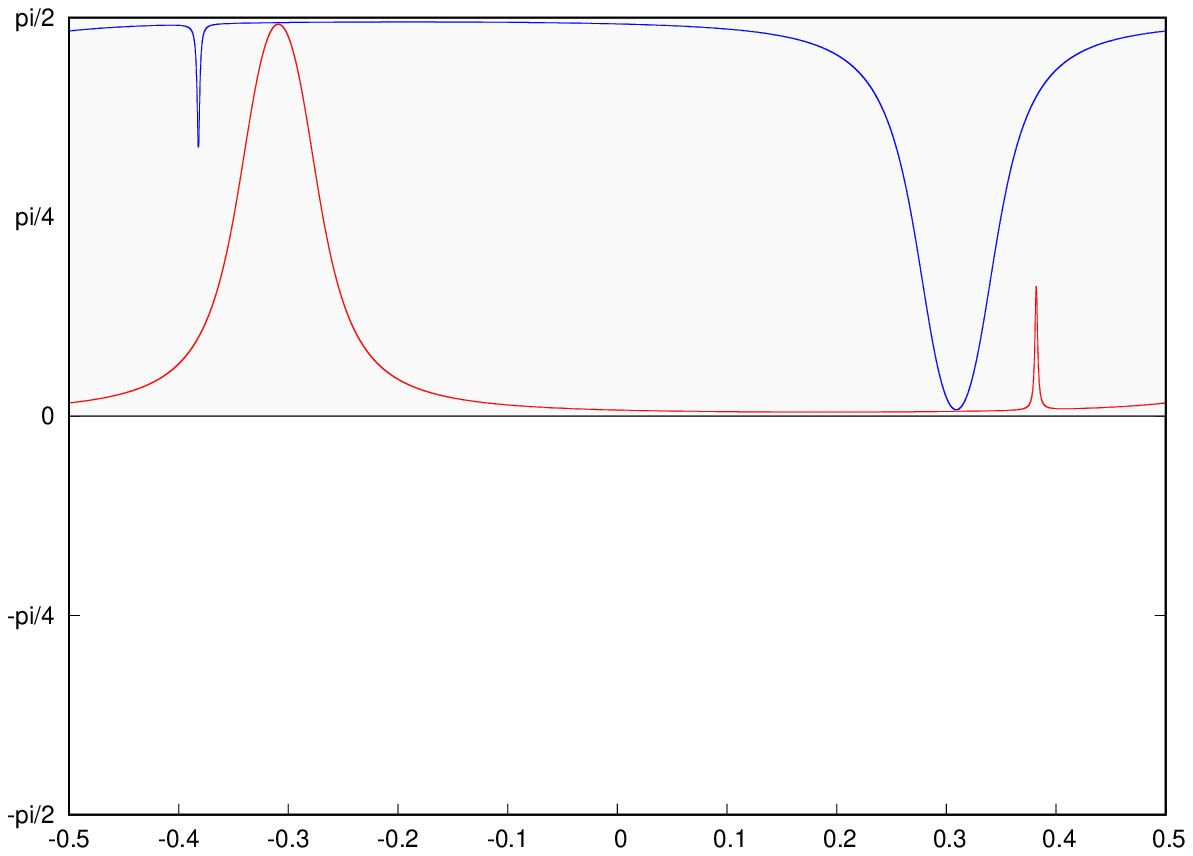}
    	\caption{Development of peaks as $E$ approaches $E_0$. Lower curve (red) is the stable direction, the upper one (blue) the unstable one.}\label{FigPeaks}
    \end{subfigure}
    \caption{For the simulations we used the almost-Mathieu potential $\cos(2\pi \theta)$ with $\omega = (\sqrt{5} - 1)/4$ and $\lambda^2 = 30$.}\label{FigInvariantCone}
\end{figure}

In \cref{FigInvariantCone}, the invariant directions have been approximated through simulation, and we can clearly see that they lie within some positive cone in $(0, \pi/2)$. Due to numerical reasons, we were not able to reliably depict the development of successive peaks (the wrinkling process). For illustrations of that process at a more advanced stage, we refer to \cite{BjerkHyperbolicity,Figueras_Haro_15,TimoudasQPL}, and their references. We wish to study this process.

In order to quantify what we mean by wrinkling, we have chosen to focus on the $C^1$-norm of the curves $r^u_E(\theta)$ and $r^s_E(\theta)$. Since the curves are in some invariant set $[\frac1C, C]$ (where $C$ is positive), when $E < E_0$, the norm is finite. We remark that it is in general not finite, and other coordinates may be more appropriate for treating general energies. We show that the second derivatives blow up according to the asymptotic law
\begin{align*}
\frac1{C_1} |E - E_0|^{-1/2 - \epsilon} \leq \| r^u_E\|_{\mathcal{C}^1} \leq C_1 |E - E_0|^{-1/2},
\end{align*}
where $C_1 > 0$ and $\epsilon$ goes to 0 as $E \nearrow E_0$. The same type of asymptotics holds for $r^s_E$. Our method also gives local information about the $C^1$-norm. Higher derivatives could be studied using the same method, but it is not clear to us how one might achieve this without avoiding long computations.

The results and methods are similar to the ones in the paper \cite{TimoudasQPL}, by the present author, where the system was given by a quasi-periodically forced logistic map. In that paper, there are two invariant graphs, one repelling and one attracting. There, the attracting graph $\psi_t$ satisfied the asymptotics
\begin{align*}
\frac1{C_1} (t - t_0)^{-1/2} \leq \|\psi_t\|_{\mathcal{C}^1} \leq C_1 (t - t_0)^{-1/2},
\end{align*}
at some critical parameter $t_0$, and the repelling one was just $0$ at every point. That repelling graph was in an expanding region at every point, whereas in the present model, the repelling graph $r^s_E$ cycles between expanding and contracting regions. This cycling is exactly why there is a loss of exponent, and we suspect that it can not be removed. However, we do remark that, for a large measure of parameters, there is no $\epsilon$ in the lower bound.

Such norms have also been studied numerically, and found to satisfy similar power laws. For instance, in \cite{Figueras_Haro_15}, they numerically observe a similar asymptotic, but for the blow-up of the $\mathcal{C}^2$-norm.

In general, the Lyapunov exponent is not continuous (see \cite{WangYouDiscontinuity}, for an example where the potential is perturbed). At least for analytic potentials, it has been shown to be continuous in the parameter $E$ (see \cite{BourgainJitomirskaya}). For such potentials, the Lyapunov exponent is known to be at least Hölder continuous in the parameter $E$ (see \cite{BourgainHolderRegularity,GoldSchlagHolderCont}).

In a subsequent paper, written jointly with Jordi-Lluis Figueras, we will show an asymptotic law for the Lyapunov exponent in the same setting as the one considered here, as $E \nearrow E_0$.

We are now ready to state our main results. Denote by $E_0$ be the lowest energy in the spectrum, and let $\attrPlain_E: \T \to \PR$ and $\repPlain_E: \T \to \PR$ be the unstable and stable projective directions, respectively, where $\PR$ is the real line with a point at infinity. We only consider irrationals $\omega$ satisfying the Diophantine condition
\begin{align}
\inf \limits_{p \in \Z} |n\omega - p| > \frac{\kappa}{|n|^\tau}, \text{ for every $n \in \Z \backslash \{0\}$}, \tag*{$\DC_{\kappa,\tau}$}\label{DC}
\end{align}
for some constants $\kappa > 0$ and $\tau \geq 1$. This condition allows us to obtain lower bounds on return times.

\begin{thm}[Main result]\label{MainResult}
Suppose that $\omega$ satisfies \ref{DC}, and the potential $v: \T \to \T$ is $C^2$ and has a unique minimum. Then there is a $\lambda_0(\omega) > 0$ such that, if $\lambda > \lambda_0$, the minimum distance (in projective coordinates) between $\attrPlain_E$ and $\repPlain_E$, is attained in a unique point $\theta_c(E)$ depending only on $E$, and is asymptotically linear:
\begin{align}
\delta(E) = \min \limits_{\theta \in \T} |\attrPlain_E(\theta) - \repPlain_E(\theta)| = C_1 \cdot (E_0 - E) + o(E_0 - E),\label{eq: MainResultLinearDistance}
\end{align}
as $E \nearrow E_0$, where $C_1 > 0$ is independent of $E$.

Furthermore, there is a positive $\epsilon = \epsilon(E)$ satisfying $\lim \limits_{E \nearrow E_0} \epsilon = 0$, and a $C_2 > 0$ independent of $E$, such that
\begin{align}
\frac1C_2 \cdot \bigg( \frac1{\sqrt{d(E)}} \bigg)^{1 - \epsilon} \leq \|\attrPlain_E\|_{\mathcal{C}^1} \leq C_2 \cdot \frac1{\sqrt{d(E)}},\label{eq: MainResultNormGrowth}
\end{align}
and the same inequality is true if we replace $\|\attrPlain_E\|_{\mathcal{C}^1}$ with $\|\repPlain_E\|_{\mathcal{C}^1}$.
\end{thm}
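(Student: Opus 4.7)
The plan is to combine a local analysis near the critical point $\theta_c$ -- the limit as $E \nearrow E_0$ of the minimizer $\theta_c(E)$, expected to coincide with the unique minimum $\theta_0$ of $v$ -- with a global argument exploiting strong hyperbolicity of $A_E$ away from $\theta_c$ for large $\lambda$, and the Diophantine property of $\omega$. The key structural identity, obtained by subtracting the two invariant equations, is that $\Delta_E := \attrPlain_E - \repPlain_E$ satisfies the multiplicative cocycle relation
\begin{align*}
\Delta_E(\theta + \omega) = \frac{\Delta_E(\theta)}{\attrPlain_E(\theta)\repPlain_E(\theta)}.
\end{align*}
At any local minimum $\theta_c(E)$ of $\Delta_E$ this forces $(\attrPlain_E \repPlain_E)(\theta_c(E)) \leq 1$; since the product of the two frozen fixed points of $A_E(\theta)$ is identically $1$, both invariant directions must approach $1$ at $\theta_c(E)$, and the frozen trace $\lambda^2 v(\theta_c(E)) - E$ must approach $2$, as $E \nearrow E_0$.

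For the linear asymptotic \cref{eq: MainResultLinearDistance}, I would first establish uniqueness of $\theta_c(E)$ for $E$ sufficiently close to $E_0$, using the non-degenerate minimum of $v$ together with the cocycle identity to rule out further minimizers. Substituting the ansatz $\attrPlain_E(\theta) = 1 + a_E(\theta)$, $\repPlain_E(\theta) = 1 + b_E(\theta)$ into the invariant equations near $\theta_c$ and linearizing in the small quantities $(a_E, b_E, E_0 - E, \theta - \theta_c)$, together with the quadratic Taylor expansion of $v$ at $\theta_0$, yields a two-scale problem whose leading-order solution gives $\delta(E) = \Delta_E(\theta_c(E)) = C_1(E_0 - E) + o(E_0 - E)$ with $C_1 > 0$.

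For \cref{eq: MainResultNormGrowth}, differentiating the invariant equation for $\attrPlain_E$ in $\theta$ produces the scalar cocycle
\begin{align*}
D(\theta + \omega) = \lambda^2 v'(\theta) + \frac{D(\theta)}{\attrPlain_E(\theta)^2}, \qquad D := (\attrPlain_E)',
\end{align*}
whose convergent backward-iterated solution is
\begin{align*}
D(\theta) = \lambda^2 \sum_{n=1}^{\infty} v'(\theta - n\omega) \prod_{k=1}^{n-1} \attrPlain_E(\theta - k\omega)^{-2}.
\end{align*}
Outside a neighborhood $U$ of $\theta_c$ of width $\asymp \sqrt{\delta(E)}$ the factor $\attrPlain_E(\theta - k\omega)^{-2}$ is of order $\lambda^{-4}$ (strong contraction of the product), while inside $U$ it is $\approx 1$. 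The upper bound $|D(\theta)| \leq C_2/\sqrt{\delta(E)}$ then follows by summing contributions of excursions of the backward orbit through $U$, each of length controlled by \ref{DC}. An analogous argument for $\repPlain_E$, combined with the relation $(\attrPlain_E \repPlain_E)(\theta_c(E)) \approx 1$, yields the corresponding upper bound for $\|\repPlain_E\|_{\mathcal{C}^1}$.

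The hardest step will be the lower bound in \cref{eq: MainResultNormGrowth}. For this I would construct an explicit $\theta_*(E)$ whose backward orbit lingers maximally inside $U$, exploiting \ref{DC} in a \emph{resonant} fashion to produce particularly close returns near $\theta_c$. The accumulated product at such $\theta_*$ falls short of its supremum only by a factor $\delta(E)^{-\epsilon}$ with $\epsilon = \epsilon(E) \to 0$ as $E \nearrow E_0$. The $\epsilon$-loss, absent in \cite{TimoudasQPL}, is structural: because $\repPlain_E$ cycles between expanding and contracting regions of the projective dynamics rather than being a fixed repelling fiber, the chosen backward orbit cannot simultaneously avoid all segments along which $\attrPlain_E^{-2}$ is suboptimally large, and quantifying this unavoidable waste in terms of the Diophantine constants $(\kappa, \tau)$ is the delicate technical point.
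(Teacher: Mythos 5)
Your proposal is a plausible-sounding heuristic, but it diverges substantially from the paper's actual proof and, as stated, has several concrete gaps that would need filling.

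The paper does not work directly with an ansatz near a degenerate frozen fixed point. It is built on the multi-scale inductive construction inherited from \cite{BjerkSchrLowEnergy} (the scales $I_n$, $M_n$, $N_n$, the energy intervals $\UE_n$, and Conditions $\CondFirst_n$, $\CondSecond_n$, $\CondUH_n$). This machinery is what gives the precise control over when orbits are in $B^u$ versus $B^s$, which your sketch replaces by informal statements like ``$\attrPlain_E(\theta-k\omega)^{-2}$ is of order $\lambda^{-4}$'' outside $U$. Without a quantitative handle on how long excursions near $I_0$ persist at every scale — precisely what the $M_n$, $N_n$ hierarchy provides — your backward series for $D(\theta)$ cannot be summed with the claimed precision.

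Two specific errors. First, your frozen-fixed-point argument is misapplied: the minimizer $\theta_c(E)$ lies in $I_n + \omega$, not in $I_0$, so it is $v(\theta_c - \omega)$, not $v(\theta_c)$, that is close to the minimum of $v$; the frozen trace at $\theta_c - \omega$ is near $-E \in [-1,1]$ (an elliptic range, complex ``fixed points''), not near $2$. The inequality $\attrPlain_E(\theta_c)\repPlain_E(\theta_c) \leq 1$ at the minimizer is correct, but it does not force both curves to approach $1$. Second, the paper's linear asymptotic \cref{eq: MainResultLinearDistance} is obtained by showing that $\derivE(\attrPlain_E - \repPlain_E)$ is close to $-1$ on the critical interval (\cref{DerivE}) and that $\derivE^2$ is uniformly bounded (\cref{SecondDerivE}), then Taylor-expanding in $E$; your two-scale linearization, while potentially workable, is not made precise and does not obviously give a nonzero slope.

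The most important missing idea is the mechanism for the $C^1$ asymptotics. The paper's argument hinges on the identity
\begin{align*}
\deriv(r_{k+1} - s_{k+1}) = (r_{k+1} - s_{k+1})\frac{\deriv(r_0 - s_0)}{r_0 - s_0}\Pi_{0,k}(s_0,r_0) + R_{0,k}(r_0,s_0),
\end{align*}
applied at stopping times $\sigma^\pm$ that mark where the two curves separate, \emph{combined with} the quadratic lower and upper bounds on $\attrPlain_E - \repPlain_E$ near $\theta_c$ (Assumption \cref{AssAlmostQuadratic}) and the interval length bound $|I| \gtrsim l(E)\sqrt{\delta(E)}$ (Assumption \cref{eq: AssIntervalLength}). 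The crucial observation is that the ratio $\deriv d(\theta_0)/d(\theta_0)$ is maximized, with value $\sim 1/\sqrt{\delta(E)}$, precisely when $(\theta_0 - \theta_c)^2 \sim \delta(E)$, and the interval is just long enough to guarantee such a $\theta_0$ exists. Your proposal contains neither the stopping-time decomposition nor the quadratic-shape argument; choosing a ``resonant $\theta_*$'' is not the mechanism used, and the $\epsilon$-loss is accounted for by bounding the distortion factor $\Pi_{0,\sigma^+}$ via the geometric-sum structure of $\sum_j D_{j,\sigma^+}^{-1}$, not by any explicit construction of an extremal orbit. As written, your sketch identifies the correct qualitative picture (hyperbolicity away from the minimum, $\epsilon$-loss from the cycling) but lacks the lemmas that actually produce the exponent $1/2$.
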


Using the first statement about $d(E)$, the second one reduces to the inequality
\begin{align*}
\frac1C \cdot \bigg( \frac1{\sqrt{E_0 - E}} \bigg)^{1 - \epsilon} \leq \|\attrPlain_E\|_{\mathcal{C}^1} \leq C \cdot \frac1{\sqrt{E_0 - E}},
\end{align*}
where the constant $C > 0$ and independent of $E$. We obtain a similar inequality for $\|\repPlain_E\|_{\mathcal{C}^1}$.

\begin{remark}
For a relatively large set of $E$ close to $E_0$ (in the sense of Lebesgue measure), we can in fact get rid of this $\epsilon$. That is, up to uniform constants, the asymptotics behaves like the square root for most energies. By increasing $\lambda$, the relative measure of such energies can be made arbitrarily close to full.

However, it also seems like the $\epsilon$ can not be removed. That is, for some positive measure of energies (going to 0 as $\lambda$ increases), the $\epsilon$ can not be removed!
\end{remark}

We stress that the methods in this paper do not rely on the linear structure of the model, and should be possible to generalize to other systems. However, the asymptotics obtained in this paper may not be universal, but depend on resonances and certain properties of the forcing map. We will shed some light on this dependence in the the next section, where we discuss the mechanisms behind the process.

We are confident that the methods contained in this paper can be extended to cover the spectral gaps as well; however, this may need some further work to obtain appropriate estimates for the spectral gaps. The reason we have chosen to study only the lowest energy is because the required estimates have already been established in \cite{BjerkSchrLowEnergy}.

\section{Outline of the paper}

The model we consider has already been studied in \cite{BjerkSchrLowEnergy}, and in order to avoid redoing a lot of work, we will simply summarize the main statements about the inductive construction used in that paper (see \cref{SecInduction}). We introduce the notation we use, as well as some basic assumptions and results, in \cref{sec: Notation}. There is a sketch of the proof, as well as a toy model to illustrate why we might expect the result to hold, in \cref{sec: ProofSketch}.

In \cref{SecAbstractGrowthEstimates,SecAbstractDerivativeEstimates,SecGrowthFormulas}, we develop formulae, and collect some statements about growth estimates that we will later use together with the results in \cref{SecInduction}. These are all used in \cref{SecSettingUp} to prove that the list of \textit{assumptions}, that are stated at the beginning of \cref{SecProofMainTheorem}, hold for our model.

From these assumptions, we prove \cref{MainResult} in \cref{SecProofMainTheorem}. The assumptions have nothing to do with the linear structure of the system, and similar formulae can be developed for other systems. Therefore, the method should work for more general systems.

\section{Notation, assumptions and basics}\label{sec: Notation}

\subsection{Diophantine irrationals}

We recall that an irrational $\omega$ is said to be Diophantine if
\begin{align}
\inf \limits_{p \in \Z} |n\omega - p| > \frac{\kappa}{|n|^\tau}, \text{ for every $n \in \Z \backslash \{0\}$}, \tag*{$\DC_{\kappa,\tau}$}
\end{align}
where $\kappa > 0$ and $\tau \geq 1$. Diophantine irrationals are desirable in these types of problems because they have very good return properties.
\begin{lemma}\label{DiophantineReturnBounds}
Let $I$ be an interval in $\T$ of length $\epsilon > 0$. Then
\begin{align*}
I \cap \bigcup \limits_{0 < |n| \leq N} (I + n\omega) = \emptyset, 
\end{align*}
where $N = \left[ \left(\frac{\kappa}{\epsilon} \right)^{1/\tau} \right]$ ($[x]$ denotes the integer part of $x$).
\end{lemma}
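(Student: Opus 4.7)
The plan is to translate the disjointness claim into a statement about the circle distance $\|n\omega\|_\T := \inf_{p \in \Z} |n\omega - p|$, and then apply the Diophantine condition directly.

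First I would observe that, since $I$ is an interval of length $\epsilon$ on $\T$, the translate $I + n\omega$ intersects $I$ if and only if $\|n\omega\|_\T < \epsilon$ (and meets $I$ in at most a boundary point when $\|n\omega\|_\T = \epsilon$). Thus proving the lemma reduces to verifying
\begin{align*}
\|n\omega\|_\T > \epsilon \quad \text{for every } n \text{ with } 0 < |n| \leq N.
\end{align*}

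Next I would invoke \ref{DC}. For such $n$, the hypothesis gives the strict lower bound $\|n\omega\|_\T > \kappa/|n|^\tau$. On the other hand, the definition $N = [(\kappa/\epsilon)^{1/\tau}]$ yields $N \leq (\kappa/\epsilon)^{1/\tau}$, hence $N^\tau \leq \kappa/\epsilon$, so that $\kappa/N^\tau \geq \epsilon$. Combining these for $0 < |n| \leq N$ gives
\begin{align*}
\|n\omega\|_\T > \frac{\kappa}{|n|^\tau} \geq \frac{\kappa}{N^\tau} \geq \epsilon,
\end{align*}
which is exactly the inequality needed. Therefore $I \cap (I + n\omega) = \emptyset$ for every $n$ in the stated range, and taking the union over $0 < |n| \leq N$ completes the proof.

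There is no real obstacle here; the only point requiring minor care is to keep the strict inequality coming from \ref{DC} aligned with the (at worst) non-strict bound $\kappa/N^\tau \geq \epsilon$ produced by rounding $(\kappa/\epsilon)^{1/\tau}$ down to an integer, so that one still gets strict disjointness of the intervals rather than a mere boundary touch.
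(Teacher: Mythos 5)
Your proof is correct. The paper itself does not reproduce an argument for this lemma—it cites \cite[Lemma 3.1]{TimoudasQPL}—but the route you take (reduce disjointness of $I$ and $I+n\omega$ to the bound $\|n\omega\|_\T > \epsilon$, then chain the strict Diophantine inequality $\|n\omega\|_\T > \kappa/|n|^\tau$ with $\kappa/N^\tau \geq \epsilon$ coming from $N = [(\kappa/\epsilon)^{1/\tau}]$) is the standard and essentially unique proof, and your care about keeping the strict inequality on the Diophantine side is exactly what handles the boundary case.
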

That is, the first return time from an interval $I$ to itself is always greater than some fixed constant times $|I|^{-1/\tau}$. For a proof of this fact, see for instance \cite[Lemma 3.1]{TimoudasQPL}.

\subsection{Basic notation}\label{sec: BasicNotation}

As is customary, we use the notation $(\theta_k, r_k) = \Phi_E^k(\theta_0, r_0)$. We also use $\pi_1, \pi_2$ to denote the projections onto the first and second coordinates, respectively:
\begin{align*}
\pi_1 (\theta_k, r_k) = \theta_k, \text{ and}\\
\pi_2 (\theta_k, r_k) = r_k.
\end{align*}
The skew-product structure ensures that points that start in the same fibre will always be in the same fibre. Therefore, given a $\theta_0 \in \T$, and $r_0, s_0, z_0$ points in the same fibre, we refer to
\begin{align*}
(\theta_k, r_k), (\theta_k, s_k) \text{ and } (\theta_k, z_k),
\end{align*}
simply as $r_k, s_k$ and $z_k$, respectively. The map $\Phi_E$ induces the fibre-wise map
\begin{align}
r_{k+1} = \lambda^2 v(\theta_k) - E - 1/r_k.\label{ProjectiveMap}
\end{align}
We immediately get the relation
\begin{align}
r_{k+1} - s_{k+1} = \frac{r_k - s_k}{r_ks_k}.\label{DistanceNextStep}
\end{align}
Since we consider only the invariant set $[\frac1C, C]$, where the invariant curves $r^u_E$ and $r^s_E$ are when $E < E_0$, then orientation is preserved: if $s_0, r_0 \in B$, then $s_0 \leq r_0$ implies that $s_1 \leq r_1$. From now on, we will assume that $s_0 \leq r_0$, but let $z_0$ be an arbitrary point of reference, in no particular relation to either $r_0$ or $s_0$. Let us introduce the notation
\begin{gather*}
d(\theta_i) = r_i - s_i,\\
D_{j, k}(r_0, s_0) = \frac1{r_j s_j \cdots r_k s_k}, \text{ and}\\
\Pi_{j, k}(r_0, s_0) = \prod \limits_{i = j}^k \frac{r_i}{s_i},
\end{gather*}
where $j \leq k$ are integers. If $j = k$, we will simply write $D_j(r_0, s_0)$ and $\Pi_j(r_0, s_0)$. Using induction, \cref{DistanceNextStep} gives us the relation
\begin{align*}
r_{k+1} - s_{k+1} = D_{j, k}(r_0, s_0) \cdot (r_j - s_j),
\end{align*}
for every $j \leq k$, and so $D_{j, k}$ is simply the factor by which distance is changed between the $j$-th and the $(k + 1)$-th step. We may relate these factors for different points:
\begin{align}\label{DistanceProductsRelation}
D_{j, k}(r_0, z_0) = \frac1{r_j s_j \cdots r_k s_k} \prod \limits_{i = j}^k \frac{s_i}{z_i} = D_{j, k}(r_0, s_0) \Pi_{j, k}(s_0, z_0).
\end{align}
Thus, $\Pi_{j, k}$ can be considered a sort of distortion factor for comparing distance growth between different points.

\subsection{Assumptions and specific notation used in the construction}\label{SecInductionNotation}

By shifting $E$ and $\theta$ linearly, we may assume that $v(\theta)$ has a unique non-degenerate global minimum equal to 0, at the point $\theta = 0$. Using Taylor expansion, we can see that if $\lambda > 0$ is sufficiently large, the set
\begin{align*}
\{ \theta : v(\theta) \leq 10/\lambda \}
\end{align*}
is contained in an interval of length $c_0/\sqrt{\lambda}$, centered at 0, for some constant $c_0$ depending only on $v$. Set
\begin{align}
I_0 &= \{ \theta : |\theta| \leq c_0/(2\sqrt{\lambda}) \}, \text{ and}\\
M_0 &= [\lambda^{1/(4\tau)}].
\end{align}
Then $I_0$ contains $\{ \theta : v(\theta) \leq 10/\lambda \}$, which can be thought of as the interval where the system experiences rotation. In light of \cref{DiophantineReturnBounds}, we see that the return time from $I_0$ to itself is bounded from below by the constant
\begin{align}
N_0 = \left(\frac{\kappa \sqrt{\lambda}}{c_0} \right)^{1/\tau} \sim \lambda^{1/(2\tau)},\label{InitialScaleConstantsRatio}
\end{align}
where $\kappa$ and $\tau$ are the constants appearing in the Diophantine condition \ref{DC} and depend only on $\omega$. Therefore, $M_0 \sim \sqrt{N_0}$, if $\lambda$ is large enough. Later on, we will construct infinite sequences
\begin{align*}
I_0 &\supset I_1 \supset \cdots,\\
M_0 &< M_1 < \cdots, \text{ and }\\
N_0 &< N_1 < \cdots.
\end{align*}
As above, for each $k > 0$, $N_k$ will be a lower bound for the return time from $I_k$ to itself, and $M_k \sim \sqrt{N_k}$ when $\lambda$ is very large. Now, we turn to the invariant sets for our fibres. Set
\begin{align*}
B &= [\lambda^{-2}, \lambda^2],\\
B^u &= [\lambda, \lambda^2], \text{ and }\\
B^s &= [\lambda^{-2},\lambda^{-1}].
\end{align*}
The set $B$ will be invariant for the set of energies $E$ that we will consider. The system is contracting in the region $\T \times B^u$ (the candidate for our first approximation of the unstable direction). Similarly, the system expands in $\T \times B^s$ (the candidate for our stable direction).

The set of energies we consider is
\begin{align*}
\E_{-1} = [-1,1].
\end{align*}
It can be easily shown that the our cocycle is uniformly hyperbolic for $E \in (-\infty, -1)$. The interval $E_{-1}$ serves as our initial guess as to where $E_0$ is located, and in fact contains it. We will later on construct an infinite sequence of energy intervals
\begin{align*}
\E_{-1} \supset \E_0 \supset \cdots,
\end{align*}
``zooming'' in on the lowest energy $E_0$. If we write $\E_n = [E^-_n, E^+_n]$, then for every $n \geq 0$, we set
\begin{align}
\UE_n = [E_n^-, E_{n+1}^-) \subset \E_n \backslash \E_{n+1}.
\end{align}
We will use the induction scheme in \cref{SecInduction} to control the dynamics for the energies $E \in \UE_n$. We remark that $\bigcup \limits_{n = -1}^\infty \UE_n = (-1, E_0)$. In particular, this means that the dynamics is uniformly hyperbolic for $E \in \UE_n$.

The following result says that, $B^u$ ($B^s$) is forwards (backwards) invariant, as long as we stay sufficiently far away from the minimum of the potential $v$ (i.e. outside of the interval $I_0$). Therefore, any interesting effects on the dynamics will be a consequence of getting close to the minimum of $v$.

\begin{lemma}\label{NextInGoodIfNotInBad}
Suppose that $E \in \mathcal{E}_{-1} = [-1, 1]$, and that $z_0 \in B$. Then
\begin{gather*}
    z_0 \not\in B^s \text{ and } \theta_0 \not\in I_0, \text{ imply that } z_1 \in B^u, \text{ and } \\
    z_0 \not\in B^u \text{ and } \theta_0 \not\in I_0 + \omega, \text{ imply that } z_{-1} \in B^s.
\end{gather*}
\end{lemma}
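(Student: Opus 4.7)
The plan is a direct computation from the fibre-wise recursion $r_{k+1} = \lambda^2 v(\theta_k) - E - 1/r_k$, exploiting the bound $v(\theta) > 10/\lambda$ that holds outside $I_0$ (since, by construction, $I_0 \supset \{v \leq 10/\lambda\}$) and the fact that $|E| \leq 1$ throughout $\mathcal{E}_{-1}$.

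For the forward implication, I would assume $z_0 \in B \setminus B^s$, so $z_0 > \lambda^{-1}$, and $\theta_0 \notin I_0$, so $\lambda^2 v(\theta_0) > 10\lambda$. The upper bound $z_0 \leq \lambda^2$ gives $1/z_0 \geq \lambda^{-2}$, and the lower bound $z_0 > \lambda^{-1}$ gives $1/z_0 < \lambda$. Plugging in,
\[
    z_1 = \lambda^2 v(\theta_0) - E - 1/z_0 > 10\lambda - 1 - \lambda \geq \lambda
\]
for $\lambda$ large, establishing $z_1 \geq \lambda$. The corresponding upper bound $z_1 \leq \lambda^2$ comes from
\[
    z_1 \leq \lambda^2 \|v\|_\infty + 1 - \lambda^{-2},
\]
which is $\leq \lambda^2$ once $\lambda$ is large enough (absorbing the constants $\|v\|_\infty$ and the $+1$ into the $\lambda^2$ term, possibly after the standing normalization).

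For the backward implication, I would invert the recursion to obtain $z_{-1} = 1/(\lambda^2 v(\theta_{-1}) - E - z_0)$ and apply the same two estimates. The hypothesis $\theta_0 \notin I_0 + \omega$ translates to $\theta_{-1} \notin I_0$, so $\lambda^2 v(\theta_{-1}) > 10\lambda$; the hypothesis $z_0 \notin B^u$ gives $z_0 < \lambda$. Hence the denominator is $> 10\lambda - 1 - \lambda > \lambda$, yielding $z_{-1} < \lambda^{-1}$. For the lower bound $z_{-1} \geq \lambda^{-2}$, I would bound the denominator from above by $\lambda^2 \|v\|_\infty + 1 - \lambda^{-2} \leq \lambda^2$, which is again routine for large $\lambda$. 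An auxiliary sanity check is that the denominator is positive, which follows from the same inequality used for the upper bound on $z_{-1}$.

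The only delicate step is bookkeeping: one must verify that the dominant term $10\lambda$ (produced by $\lambda^2 v$ outside $I_0$) beats both $E$ and $1/z_0$ (resp.\ $z_0$) for $\lambda$ large, and that the bound $\lambda^2 \|v\|_\infty$ respects the upper endpoint of $B$. I expect no real obstacle beyond making these constants explicit; the entire proof should be a few lines given the definitions of $I_0$, $B$, $B^u$, $B^s$ and the standing assumption that $\lambda$ is sufficiently large.
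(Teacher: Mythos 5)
Your strategy — a direct computation from the fibre map $r_{k+1}=\lambda^2 v(\theta_k)-E-1/r_k$, using $v(\theta)>10/\lambda$ outside $I_0$ to drive the leading term and $|E|\le 1$, $z_0\in B$ for the error terms — is exactly the natural argument here. The paper does not supply a proof of this lemma (it is a basic consequence of the definitions, imported from the construction in \cite{BjerkSchrLowEnergy}), so there is nothing to compare against; your route is the only reasonable one, and the half of each implication that you treat as the main point (the lower bound $z_1\ge\lambda$, respectively the upper bound $z_{-1}\le\lambda^{-1}$) is clean and correct.

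However, the other half of each implication — the upper endpoint $z_1\le\lambda^2$, and dually $z_{-1}\ge\lambda^{-2}$ — is not actually established by your estimate. You bound
\[
z_1 \le \lambda^2\|v\|_\infty + 1 - \lambda^{-2},
\]
and claim this is $\le\lambda^2$ for $\lambda$ large, ``absorbing the constants.'' But this requires $\|v\|_\infty \le 1-\lambda^{-2}+\lambda^{-4}$, which is strictly less than $1$; even after the natural rescaling that makes $\|v\|_\infty=1$, the displayed quantity is $\lambda^2+1-\lambda^{-2}>\lambda^2$. The constant $+1$ coming from $|E|\le 1$ cannot be absorbed into $\lambda^2$ no matter how large $\lambda$ is, so ``$\lambda$ large'' does not close this. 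The same issue appears symmetrically in your lower bound for $z_{-1}$. This is a genuine gap in the argument as written, though it likely reflects a convention the present paper leaves implicit: either a strict normalization $\|v\|_\infty<1$, or $B$ and $B^u$ in the source are defined with a looser upper endpoint (e.g.\ $2\lambda^2$ or $\lambda^2\|v\|_\infty+1$). You should state explicitly which normalization you are invoking and verify the inequality holds under it, rather than appealing to ``$\lambda$ large,'' which does not suffice here.
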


As the energy gets closer to $E_0$, the appropriate scale $n$ will increase, since we will need more time to recover from the worse growth estimates (where the loss of uniformity happens), therefore requiring longer return times before tackling the "bad returns" to $I_n$. Moreover, we set
\begin{align}
\Xi^u_n = \bigcup \limits_{i=0}^{n} \bigcup \limits_{m=1}^{M_i} (I_i + m\omega),\label{ExceptionalForwardSystem}\\
\Xi^s_n = \bigcup \limits_{i=0}^{n} \bigcup \limits_{m=0}^{M_i} (I_i - m\omega),\label{ExceptionalBackwardSystem}\\
\Theta_n = \T \backslash ( \Xi^u_n \cup \Xi^s_n).
\end{align}
The sets $\Xi^u_n$ and $\Xi^s_n$ should be thought of as the "immediate vicinity" of $I_0$, where at each scale the immediate vicinity is considered greater in terms of iterates. These sets are where we "lose information" about the invariant directions, and $\Theta_n$ is where we have almost perfect information about them, at scale $n$. Note that, since each $M_i \sim \sqrt{N_i}$, the vast majority of iterates spend time in $\Theta_n$. This is the basis of the construction.

In order to locate the invariant directions, we have to make an initial guess. They will be, for the two respective directions, the boxes
\begin{equation}
\begin{gathered}
B^u_n =  (I_n - M_n\omega) \times B^u = \{(\theta, r) | \theta \in I_n - M_n\omega, r \in B^u\}, \\
B^s_n = (I_n + M_n\omega) \times B^u = \{(\theta, r) | \theta \in I_n + M_n\omega, r \in B^s\}.\label{InitialBoxes}
\end{gathered}
\end{equation}
Iterating these boxes will help us construct the invariant curves. To do so, we wish to look at the intersection of the forward iterates of the first box
\begin{align*}
A^u_n = \Phi^{M_n + 1}(B^u_n) = \{(\theta, r) | \theta \in I_n + \omega, \phi^{u,-}_n(\theta, E) \leq r \leq \phi^{u,+}_n(\theta, E) \},
\end{align*}
with the backward iterates of the second one
\begin{align*}
A^s_n = \Phi^{-M_n + 1}(B^s_n) = \{(\theta, r) | \theta \in I_n + \omega, \phi^{s, -}_n(\theta, E) \leq r \leq \phi^{s,+}_n(\theta, E) \}.
\end{align*}
If they don't intersect, scale $I_n$ will be sufficient to establish uniform estimates from these initial guesses. In fact, if $E \in [-1, E_0)$, where $E_0$ is the lowest energy of the spectrum, then there will be an $n$ such that they $A^u_n$ and $A^s_n$ do not intersect, and in fact $\phi^{s,+}_n < \phi^{u,-}_n$.

\section{Ideas and sketch of the proof}\label{sec: ProofSketch}

\subsection{Illustrating the idea behind norm growth through a model example}

Suppose that we have two functions $\phi_0, \psi_0: [-a, a] \to \R$ that are quadratically separated:
\begin{align*}
\psi_0(\theta) - \phi_0(\theta) = d + \theta^2,
\end{align*}
where $d > 0$ is some constant. Suppose that we generate a new functions $\psi_1$ over $[-a, a]$, given by
\begin{align*}
\psi_1(\theta) - \phi_0(\theta) = r(\psi_0(\theta) - \phi_0(\theta)) = rd + r \theta^2,
\end{align*}
for some $r > 1$. That is, $\psi_1$ is obtained simply by separating $\psi_0$ and $\phi_0$ by a factor $r$. We similarly obtain functions $\psi_2, \dots, \psi_n$ by 
\begin{align}
\psi_k(\theta) - \phi_0(\theta) = r(\psi_{k-1}(\theta) - \phi_0(\theta)) = r^k(\psi_0(\theta) - \phi_0(\theta)).\label{ExDistanceGrowthRelation}
\end{align}
For any given $\theta \in [-a, a]$, we record the first $\sigma = \sigma(\theta) \geq 0$ such that
\begin{align*}
\psi_\sigma(\theta) - \phi_0(\theta) \geq \delta > 0.
\end{align*}
We say that $\delta$ is the distance at which the curves (the graphs of the functions) become separated/decorrelated. We immediately see that
\begin{align*}
\frac{\delta}{d + \theta^2} \leq r^\sigma \leq \frac{r\delta}{d + \theta^2}.
\end{align*}
When we differentiate the relation \cref{ExDistanceGrowthRelation} with respect to $\theta$, we obtain
\begin{align*}
\deriv (\psi_k(\theta) - \phi_0(\theta)) = r^k \deriv (\psi_0(\theta) - \phi_0(\theta)) = 2r^k \theta.
\end{align*}
Therefore, when $k = \sigma$, we have
\begin{align*}
2 \delta \frac{\theta}{d + \theta^2} \leq \deriv (\psi_k(\theta) - \phi_0(\theta)) \leq 2 r\delta \frac{\theta}{d + \theta^2},
\end{align*}
which has a maximum when $\theta = \sqrt{d}$. Therefore, if $a \geq \sqrt{d}$, the maximum is realized, and we would have
\begin{align*}
\delta \frac1{\sqrt{d}} \leq \max \limits_{\theta \in [-a, a]} \max \limits_{0 \leq k \leq \sigma(\theta)} \deriv (\psi_k(\theta) - \phi_0(\theta)) \leq r \delta \frac1{\sqrt{d}}.
\end{align*}
This model example captures the essential ideas of the construction. It provides a toy model of the local behaviour of the invariant functions in the present model (the one considered in this paper). The main difficulties in our present model are:
\begin{itemize}
\item A lack of uniform growth estimates. Namely, the factor $r$ depends on $\theta$, and a typical orbit will spend cycle between periods of expansion, and periods of contraction, before becoming separated/decorrelated.
\item The initial graph is not perfectly quadratic, but close to one. Moreover, it is not obvious how large the interval is, where it satisfies some given quadratic condition. That is, it is not obvious that we can choose $a \geq \sqrt{d}$.
\end{itemize}
The first point may lead to a loss of uniform constants in the above inequality. In fact, this is something we should expect for a small set of exceptional energies. The second point is crucial in obtaining anything close to the exponent $\frac12$. However, it turns out that such intervals are even much longer than what is needed, but it does remains an important part of the proof.

\subsection{Sketch of the construction and proof}

\begin{figure}[h]
    \centering
    \begin{subfigure}[t]{0.45\textwidth}
        \psfrag{U}[Br]{$B^u$}
        \psfrag{S}[Br]{$B^s$}
    	\includegraphics[width=\textwidth]{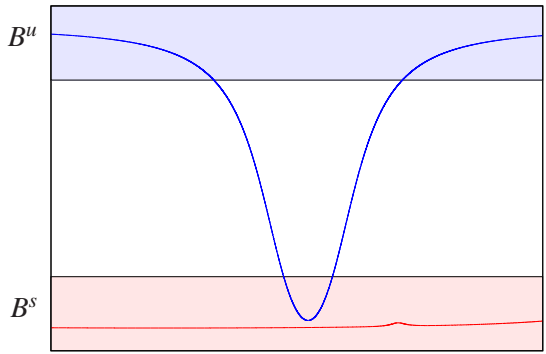}
    	\caption{Zoom-in on the initial peak, which is approximately quadratic.}\label{FigInitialPeak}
    \end{subfigure}
    \hfill
	\begin{subfigure}[t]{0.45\textwidth}
	    \psfrag{U}[Br]{$B^u$}
        \psfrag{S}[Br]{$B^s$}
    	\includegraphics[width=\textwidth]{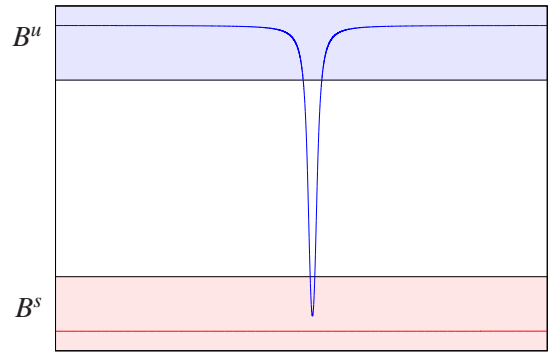}
    	\caption{Zoom-in (same scale as in the left figure) on the peak after a few iterations. Notice that a lot of points have escaped $B^s$, and the peak is much sharper.}\label{FigIteratedPeak}
    \end{subfigure}
    \caption{For the simulations we used the almost-Mathieu potential $\cos(2\pi \theta)$ with $\omega = (\sqrt{5} - 1)/4$ and $\lambda^2 = 30$.}\label{FigFollowingPeak}\label{fig: ZoomIn}
\end{figure}

In $B^u$ (blue in \cref{fig: ZoomIn}) the system is strongly contracting (by a factor $\leq \lambda^{-1}$), and in $B^s$ (red in \cref{fig: ZoomIn}) the system is strongly expading (by a factor $\geq \lambda$). Since we are looking at the projective dynamics, the unstable region is indeed contracting, and the stable one expanding. Recall that, for $E < E_0$, we have a stable direction $\repPlain_E: \T \to B$, and an unstable direction $\attrPlain_E: \T \to B$

The interval $I_0$ is the only place where non-negligible rotation takes place. It is therefore the only place where an invariant direction can change from expanding to expanding. That is, if we have an invariant function $\psi$, then $\psi(\theta) \in B^u$ implies $\psi(\theta + \omega) \in B^u$, unless $\theta \in I_0$. Similarly, $\psi(\theta) \in B^s$, implies $\psi(\theta - \omega) \in B^s$, unless $\theta \in I_0 + \omega$. This is what \cref{NextInGoodIfNotInBad} says.

Note that the rotation taking place in $I_0$ is reflected in the invariant directions over $I_0 + \omega$, since
\begin{align*}
\psi(\theta + \omega) = \Phi(\theta, \psi(\theta)).
\end{align*}
That is, the direction is lagging one step behind whatever the transformation is doing.

When the system is uniformly hyperbolic, the curve $\attrPlain_E$ will spend most of its time in $B^u$, and $\repPlain_E$ will spend most of the time in $B^s$. As $E$ gets closer to $E_0$, the curves will approach one another, and the curves will spend progressively less time in their respective regions.

Specifically, as the directions get closer to each other, whatever expansion/contraction one direction experiences, the other one does too. This causes a complicated cycling between expansion and contraction, in our case ultimately leading to non-uniform hyperbolicity.

Using an induction procedure, we identify an interval $I = I(E) \subset I_0 + \omega$, where the minimum of their difference is minimised. It can then be shown that the distance is asymptotically linear in $E$. That is, if we let $\delta(E)$ denote the minimum distance between the curves for the parameter E, we have
\begin{align*}
\derivE \delta(E) = const \cdot (E_0 - E) + o(E_0 - E),
\end{align*}
for some positive constant independent of $E$. Moreover, the difference between the curves has an approximately quadratic shape over $I$, that is
\[
d(\theta) = \attrPlain_E(\theta) - \repPlain_E(\theta) \sim \delta(E) + (\theta - \theta_c)^2
\]
for some $\theta_c = \theta_c(E) \in I$ (where the minimum is attained), and every $\theta \in I$. 

For every $\theta \in I$, we define stopping times $\sigma^\pm = \sigma^\pm(\theta, E)$, that measure how long the two directions stay close going forwards. Specifically, $\sigma^+$ is the time at which they become separated, going forwards, and $\sigma^-$ is defined similarly, but going backwards. We show that the second derivative has the biggest blow up in the set of $\theta$'s between such stopping times. In \cref{fig: ZoomIn}, we see how the difference between the curves becomes sharper as we iterate the interval $I$ forwards.

How do we show the bounds for the $\mathcal{C}^1$-norms of the curves? The crucial step is relating the growth of the distance to the growth of the derivatives. Indeed, if the difference between the curves is very close to 0, then the expansion the two curves will experience should be very similar.

In the next section, we show that the factor which determines the growth of their difference, is essentially the same as the one controlling the growth of the derivative, with the factors losing only an exponent $\epsilon$ between them. This tight coupling between the factors holds up to the stopping times defined above. Once the stopping time has been reached, the local information of one curve no longer gives any reliable local information about the other, and the procedure stops.

We will use the notation introduced in \cref{sec: BasicNotation}. Heuristically, in order to obtain the stopping times $\sigma^+(\theta_0)$, that is the first time when the curves have been separated by the distance $const$ starting from $\theta_0$, we can solve the equation
\begin{align*}
d(\theta_{\sigma^+}) = d(\theta_0) D_{0, \sigma^+ - 1}(\attrPlain_E(\theta_0), \repPlain_E(\theta_0)) \approx const.
\end{align*}
The distance factor can then be expressed as
\begin{align*}
D_{0, \sigma^+ - 1}(\attrPlain_E(\theta), \repPlain_E(\theta)) \sim \frac1{d(\theta)}.
\end{align*}
The expression \cref{ForwardDerivativeDifferenceFormula} gives us, as long as the remainder term is reasonably small, that
\begin{align*}
\deriv d(\theta_{\sigma^+}) &\approx d(\theta_{\sigma^+})\frac{\deriv d(\theta_0)}{d(\theta_0)} \Pi_{0, \sigma^+ -1}(\attrPlain(\theta_0), \repPlain(\theta_0)),
\end{align*}
where $\Pi_{0, _{\sigma^+} - 1}$ is some (small) distortion factor. As long as $|I| \gtrsim \sqrt{\delta(E)}$, we may choose a $\theta_0$ that makes $d(\theta_0) \sim \delta(E)$, and $\deriv d(\theta_0) \sim \sqrt{\delta(E)}$. In that case, we see that 
\begin{align*}
\frac{\deriv d(\theta_0)}{d(\theta_0)} \sim \frac1{\sqrt{\delta(E)}},
\end{align*}
which in turn shows that
\begin{align*}
\deriv d(\theta_{\sigma^+}) \sim \frac1{\sqrt{d(E)}}.
\end{align*}
Since $k$ was chosen such that $\Delta(\theta_k) \sim 1$, we find that the only obstacle remaining is controlling the distortion factor $\Pi_{0, k - 1}$. It turns out that it is close to 1, except for some exceptional energies, which causes the loss of exponent in the lower bound of the norm.

In fact, these exceptional energies are precisely the ones where the stopping times align with the cycling of expanding/contracting behaviour. That is, the stopping time occurs shortly before the next return to bad points in $I_0$, namely the sets $I_n$, where a larger $n$ means the set is worse.

\section{Proof of Main Theorem}\label{SecProofMainTheorem}

In order to split up the proof into smaller parts, we will show how the conclusions in \cref{MainResult} follow from a list of assumptions. In the next section, we will prove that all of those assumptions hold for Schr\"odinger cocycles satisfying the assumptions of \cref{MainResult}.

\subsection{List of assumptions}

Here is a list of the assumptions we will base the proof on.

\begin{assertions}\label{ListAssumptions}
\item\label{AssInvGraphs} For every $E \in [-1, E_0)$ there are two $C^2$ invariant functions (curves) $\repPlain_E < \attrPlain_E: \mathbb{T} \to B$.
\item\label{AssInterval} There are strictly positive constants $C_0$ and $C_1$, independent of $E$, and an increasing function $l(E) \nearrow \infty$ as $E \nearrow E_0$, such that for every $E \in [-1, E_0)$, there is an interval $I = I(E)$ satisfying:
\begin{assertions}
\item\label{AssLinearMinDist} The minimum distance between the curves  is linearly asymptotic
\begin{align}
\delta(E) = \min_{\theta \in \T} (\repPlain_E - \attrPlain_E)(\theta) = \min_{\theta \in I(E)} (\repPlain_E - \attrPlain_E)(\theta) = C_0 \cdot (E_0 - E) + o(E_0 - E)\label{eq: AssLinearMinDist}
\end{align}
as $E \nearrow E_0$.
\item\label{AssQuadratic} There is some $\theta_c = \theta_c(E) \in I$, such that
\begin{gather}
\delta(E) + \frac1{C_1} \cdot (\theta - \theta_c)^2 \leq (\attrPlain_E - \repPlain_E)(\theta) \leq \delta(E) + C_1 \cdot (\theta - \theta_c)^2, \label{AssAlmostQuadratic}\\
|\deriv \attrPlain_E(\theta)| < C_1, \text{ and } |\deriv \repPlain_E(\theta)| \leq C_1\label{AssDerivativeBoundsCriticalInterval}
\end{gather}
for every $\theta \in I$.
\item The length of the interval satisfies the lower bound
\begin{align}
|I| \geq l(E) \cdot \sqrt{\delta(E)}.\label{eq: AssIntervalLength}
\end{align}
\end{assertions}
\item\label{AssST} For every $E \in [-1, E_0)$ and $\theta \in I$ there are stopping times $\sigma^\pm = \sigma^\pm(\theta, E) \geq \widehat{\sigma}^\pm = \widehat{\sigma}^\pm(\theta, E) > 0$, and a positive function $\eta(E) \searrow 0$ (as $E \nearrow E_0$), satisfying
\begin{gather*}
\sigma^+ - \widehat{\sigma}^+ \leq \eta(E)\sigma^+_E,\\
\sigma^- - \widehat{\sigma}^- \leq \eta(E)\sigma^-_E, \text{ and:}
\end{gather*}
\begin{assertions}
\item If we set $\sigma^\pm_E = \max \limits_{\theta \in I} \sigma^\pm(\theta, E)$, then
\begin{align}
\Big( \bigcup \limits_{m = 1}^{15\sigma^+_E} I + m\omega \Big) \bigcap \Big( \bigcup \limits_{m = 1}^{15\sigma^-_E} I - m\omega \Big) = \emptyset.\label{AssSTReturnBounds}
\end{align}
\label{AssSTLowRF}
\item\label{AssForwardST} For every $\theta \in I$, and every $0 \leq k \leq \sigma^+(\theta)$,
\begin{align}
D_{0, k}(\repPlain_E(\theta), \attrPlain_E(\theta)) &\geq \lambda^{(k + 1)/2}.\label{AssGrowthForwardFromInterval}
\end{align}
For every $\theta \in I$, and every $0 \leq j \leq \widehat{\sigma}^+$,
\begin{align}
D_{j, \widehat{\sigma}^+}(\repPlain_E(\theta), \attrPlain_E(\theta)) &\geq \lambda^{(\widehat{\sigma}^+ - j + 1)/2}.\label{AssGrowthForwardCloseToST}
\end{align}
For every $\theta \in I$, $0 \leq j \leq \sigma^+$ and every $j + \eta(E)\sigma^+_E \leq k \leq N^+(\theta)$, where $N^+(\theta) > 0$ is the smallest integer such that $\theta + N^+\omega \in I$, we have
\begin{align}
D_{j, k}(\repPlain_E(\theta), \repPlain_E(\theta)) &\geq \lambda^{(k - j + 1)/2}.\label{AssForwardTail}
\end{align}
\item For every $0 \leq k \leq \sigma^-$, and every $\theta \in I$,
\begin{align}
D_{-k, 0}(\repPlain_E(\theta), \attrPlain_E(\theta)) &\leq \lambda^{-(k+1)/2}.\label{AssGrowthBackwardFromInterval}
\end{align}
For every $\theta \in I$, and every $0 \leq j \leq \widehat{\sigma}^-$,
\begin{align}
D_{-\widehat{\sigma}^-, -j}(\repPlain_E(\theta), \attrPlain_E(\theta)) &\geq \lambda^{-(\widehat{\sigma}^- - j + 1)/2}.\label{AssGrowthBackwardCloseToST}
\end{align}
For every $\theta \in I$, $0 \leq j \leq \sigma^-$ and every $j + \eta(E)\sigma^-_E \leq k \leq N^-(\theta)$, where $N^-(\theta) > 0$ is the smallest integer such that $\theta - N^-\omega \in I$, we have
\begin{align}
D_{-k, -j}(\attrPlain_E(\theta), \attrPlain_E(\theta)) &\geq \lambda^{-(k - j + 1)/2}.\label{AssBackwardTail}
\end{align}
\label{AssBackwardST}
\end{assertions}
\end{assertions}

\vspace{1.2em}

\noindent We will now briefly discuss each of the assumptions, and how they can be interpreted.

The first one, \cref{AssInvGraphs}, is saying that we have two distinct invariant families of directions, the directions of the most expansion ($\attrPlain_E$), and the most contraction ($\repPlain_E$). The estimates in \cref{AssForwardTail,AssBackwardTail} give bounds of their respective expansion/contraction.

In the next one, \cref{AssInterval}, the interval $I$ will be the interval where these directions are the closest to each other. In the present model, the smallest distance is asymptotically linear, and behaves quadratically at the interval $I$. The quadratic condition ensures that the directions are not too close, too frequently. This is a consequence of the minimum being \textit{non-degenerate}, and holds generally for the type of model we consider in this paper.

The assumptions in \cref{AssST} are essentially growth estimates for Lyapunov exponents, and help us measure how much uniformity is lost at each parameter. Essentially, as long as they are close to each other, they separate exponentially fast (both forwards and backwards).

The stopping times $\sigma^\pm = \sigma^\pm(\theta, E)$, where $\theta \in I$, are simply the largest times such that
\begin{equation*}
(\repPlain_E - \attrPlain_E)(\theta + k\omega) < \lambda^{-3},
\end{equation*}
for every $-\sigma^- \leq k \leq \sigma^+$. They are defined in \cref{sec: StoppingTimes}, together with the intervals $I(E)$.

\subsection{Proof of the main result}

Note that the assumption in \cref{eq: AssLinearMinDist} is in fact the statement \cref{eq: MainResultLinearDistance} in \cref{MainResult}, which follows from \cref{prop: LinearMinDistance}. Therefore, we only need to focus on the statement \cref{eq: MainResultNormGrowth}, which follows from \cref{prop: DerivativeAsymptotics}. In the next section, we will prove that the above assumptions hold for our model, and so the main results indeed follows if we can prove it from our list of assumptions.

In what follows, let both $E \in (E_{-1}, E_0)$ and $\theta \in I = I(E)$ be fixed. In order to ease notation, we set
\begin{equation}
    \begin{split}
s_0 &= \repPlain_E(\theta_0), \text{ and}\\
r_0 &= \attrPlain_E(\theta_0).
    \end{split}\label{eq: InvGraphsNotation}
\end{equation}
Because of \cref{AssInvGraphs}, the dynamics is always confined to $B = [\lambda^{-2}, \lambda^2]$, and everything in \cref{SecGrowthFormulas} will hold for the sequences $r_k$ and $s_k$. Recall that $\sigma^+ = \sigma^+(\theta_0)$ depends on $\theta_0$, and consider the relation in \cref{ForwardDerivativeDifferenceFormula}:
\begin{align*}
\deriv(r_{\sigma^+ + 1} - s_{\sigma^+ + 1}) &= (r_{\sigma^+ + 1} - s_{\sigma^+ + 1})\frac{\deriv(r_0 - s_0)}{r_0 - s_0} \Pi_{0, \sigma^+}(s_0, r_0) + R_{0, \sigma^+}(r_0, s_0).
\end{align*}
We will split the proof into three parts. The first part deals with the first term
\begin{align*}
(r_{\sigma^+ + 1} - s_{\sigma^+ + 1})\frac{\deriv(r_0 - s_0)}{r_0 - s_0} \Pi_{0, \sigma^+}(s_0, r_0).
\end{align*}
This term is, as we shall see, the dominant term. The second part deals with the remainder term $R_{0, \sigma^+}(r_0, s_0)$, which will be shown to be negligible in comparison to the first one. The last part of the proof deals with showing that the maximum of the norm of $\attrPlain_E$ is essentially attained at $\theta_{\sigma^+ + 1}$ for some appropriate initial point $\theta_0 \in I$.

\subsubsection{Treating the dominant term}

Consider the term
\begin{align*}
(r_{\sigma^+ + 1} - s_{\sigma^+ + 1})\frac{\deriv(r_0 - s_0)}{r_0 - s_0} \Pi_{0, \sigma^+}(s_0, r_0).
\end{align*}
The last factor $\Pi_{0, \sigma^+}(s_0, r_0)$ can be dealt with through the inequality in \cref{DistortionFactorInequality}, leading us to investigate the sum
\begin{align*}
\sum \limits_{j = 0}^{\sigma^+} \frac1{D_{j, \sigma^+}(r_0, s_0)}.
\end{align*}
That is, an upper bound for that sum leads to a lower bound for the factor $\Pi_{0, \sigma^+}(s_0, r_0)$. The problem here is that $D_{j, \sigma^+}(r_0, s_0)$ may behave badly (not uniformly exponentially) for $j$ close to $\sigma^+$. Therefore, we split the sum into
\begin{align*}
\sum \limits_{j = 0}^{\sigma^+} \frac1{D_{j, \sigma^+}(r_0, s_0)} = \sum \limits_{j = 0}^{\tau} \frac1{D_{j, \sigma^+}(r_0, s_0)} + \sum \limits_{j = \tau + 1}^{\sigma^+} \frac1{D_{j, \sigma^+}(r_0, s_0)},
\end{align*}
where $\tau =  \widehat{\sigma}^+$, and therefore $|\sigma^+ - \tau| \leq \eta(E)\sigma^+_E$. For $0 \leq j \leq \tau$, we have the inequality
\begin{align*}
D_{j, \tau}(r_0, s_0) \geq \lambda^{(\tau - j + 1)/2},
\end{align*}
by \cref{AssGrowthForwardCloseToST}. Since $D_{j, \sigma^+} \geq 1$ for every $0 \leq j \leq \sigma^+$ (otherwise the distance at step $j$ would be greater than at step $\sigma^+$, contradicting the definition of the stopping time), we may estimate
\begin{align*}
    \sum \limits_{j = \tau + 1}^{\sigma^+} \frac1{D_{j, \sigma^+}(r_0, s_0)} \leq \sigma^+ - \tau = \eta(E) \sigma^+_E.
\end{align*}
Combining these estimates, we end up with the upper bound
\begin{align*}
\sum \limits_{j = 0}^{\sigma^+} \frac1{D_{j, \sigma^+}(r_0, s_0)} \leq \frac1{1 - \lambda^{-1/2}} + \eta(E)\sigma^+_E.
\end{align*}
The inequality in \cref{DistortionFactorInequality} immediately implies that
\begin{align*}
    \exp\Big(- \frac{\lambda^4}{1 - \lambda^{-1/2}} - \lambda^4\eta(E)\sigma^+_E \Big) \leq \Pi_{0, \sigma^+}(s_0, r_0) \leq 1.
\end{align*}
Unfortunately, the term $\delta(E)\sigma^+$ prevents a uniform lower bound. However, we may still estimate how much we lose. Since the dynamics takes place in $B = [\lambda^{-2}, \lambda^2]$, we have the upper bound
\begin{align*}
r_{\sigma^+ + 1} - s_{\sigma^+ + 1} \leq \lambda^2.
\end{align*}
Therefore, \cref{AssGrowthForwardFromInterval} gives us the inequality
\begin{align*}
\lambda^{(\sigma^+ + 1)/2} \leq D_{0, \sigma^+}(r_0, s_0) \leq \frac{\lambda^2}{r_0 - s_0}.
\end{align*}
This means that
\begin{align}
\sigma^+ \leq 3 + 2\log_\lambda \frac1{r_0 - s_0},\label{STBound}
\end{align}
and in particular that $\sigma^+_E \lesssim \log_\lambda \frac1{\delta(E)}$. Therefore
\begin{align*}
    \exp( - \lambda^4\eta(E)\sigma^+_E ) \geq \text{const} \cdot \delta(E)^{2\lambda^4\eta(E)},
\end{align*}
where the constant is independent of $E$, and uniformly bounded away from 0. Since $\eta(E) \searrow 0$ as $E \nearrow E_0$, there is a positive constant $const$ and a positive $\epsilon = \epsilon(E)$ that goes to 0 as $E \nearrow E_0$ such that
\begin{align*}
    const \cdot \delta(E)^\epsilon \leq \Pi_{0, \sigma^+}(s_0, r_0) \leq 1.
\end{align*}
We now turn our attention to the factor
\begin{align}
    \frac{\deriv(r_0 - s_0)}{r_0 - s_0}.\label{eq: DominantTermFraction}
\end{align}
Recall that we set $r_0 = \attrPlain_E(\theta)$ and $s_0 = \repPlain_E(\theta)$ in \cref{eq: InvGraphsNotation}. Using the inequalities in \cref{AssAlmostQuadratic}, we have
\begin{gather*}
r_0 - s_0 = \delta(E) + C_1(\theta)(\theta - \theta_c))^2, \text{ and }\\
\deriv(r_0 - s_0) = \widetilde{C}_1(\theta)(\theta - \theta_c),
\end{gather*}
where $\frac1{C_1} \leq C_1(\theta) \leq C_1$ and $\frac2{C_1} \leq \widetilde{C}_1(\theta) \leq 2C_1$. Now, there is a unique $\beta > 0$ such that
\begin{align*}
    C_1(\theta)(\theta - \theta_c))^2 = \delta(E)^{2\beta}.
\end{align*}
This means that
\begin{align*}
    \widetilde{C}_1(\theta)(\theta - \theta_c) = \widehat{C}_1(\theta) \delta(E)^\beta,
\end{align*}
where $2C_1^{-1/2} \leq \widehat{C}_1(\theta) \leq 2C_1^{3/2}$. We end up with the new expressions
\begin{align*}
r_0 - s_0 = \delta(E) + \delta(E)^{2\beta}, \text{ and }\\
\deriv(r_0 - s_0) = \widehat{C}_1 \cdot \delta(E)^\beta.
\end{align*}
Plugging these expressions into \cref{eq: DominantTermFraction}, we end up with
\begin{align*}
\frac{\deriv(r_0 - s_0)}{r_0 - s_0} = \widehat{C}_1(\theta) \cdot \frac1{\delta(E)^{1 - \beta} + \delta(E)^\beta}
\end{align*}
which attains its maximum at $\beta = \frac12$. If we can show that some $\theta$ satisfies that $\beta = \frac12$, this maximum is indeed attained. Since $|I(E)| \geq l(E) \cdot \sqrt{\delta(E)}$, where $L(E) \nearrow \infty$ as $E \nearrow E_0$, by the assumption in \cref{eq: AssIntervalLength}, it is clear that some $\theta$ has $\beta = \frac12$, provided that $E$ is sufficiently close to $E_0$.

Again, since the dynamics is constrained to $B = [\lambda^{-2}, \lambda^2]$, we have the trivial bound $0 \leq r_{\sigma^+} - s_{\sigma^+} \leq \lambda^2$. All this together gives us the inequalities
\begin{align*}
const \cdot \frac1{\sqrt{\delta(E)}} \delta(E)^\epsilon \leq \max \limits_{\theta_0 \in I} (r_{\sigma^+} - s_{\sigma^+})\frac{\deriv(r_0 - s_0)}{r_0 - s_0} \Pi_{0, \sigma^+}(s_0, r_0) \leq const \cdot \frac1{\sqrt{\delta(E)}},
\end{align*}
where $\epsilon = \epsilon(E)$ is positive and $\lim \limits_{E \nearrow E_0} \epsilon = 0$. Since $\Pi_{0, k} \leq 1$, we always have the upper bound
\begin{align*}
(r_{k + 1} - s_{k + 1})\frac{\deriv(r_0 - s_0)}{r_0 - s_0} \Pi_{0, k}(s_0, r_0) \leq (r_{k + 1} - s_{k + 1})\frac{\deriv(r_0 - s_0)}{r_0 - s_0}.
\end{align*}
By definition of $\sigma^+$, it is also the case that for every $k \leq \sigma^+$ we have the inequality $r_{k + 1} - s_{k + 1} < r_{\sigma^+ + 1} - s_{\sigma^+ + 1}$. Therefore, we immediately get the bounds
\begin{align}
const \cdot \frac1{\sqrt{\delta(E)}} d(E)^\epsilon \leq \max \limits_{\theta_0 \in I, 0 \leq k \leq \sigma^+} (r_{k+1} - s_{k+1}) \frac{\deriv(r_0 - s_0)}{r_0 - s_0} \Pi_{0, k}(s_0, r_0) \leq const \cdot \frac1{\sqrt{\delta(E)}},\label{eq: StoppingTimeDominantTermBounds}
\end{align}
where $\lim \limits_{E \nearrow E_0} \epsilon = 0$.

\subsection{Treating the remainder term}

By \cref{ForwardDerivativeRemainderTermUpperBound}, we have
\begin{align*}
|R_{0, \sigma^+}(r_0, s_0)| &\leq 2 \lambda^4 \cdot \sigma^+ \cdot \max_{0 \leq j \leq \sigma^+} |\deriv (s_j)|.
\end{align*}
As we saw in \cref{STBound},
\begin{align*}
\sigma^+ \leq 3 + 2\log_\lambda \frac1{r_0 - s_0} \leq 3 + 2\log_\lambda \frac1{\delta(E)}.
\end{align*}
Therefore, there is an $\epsilon = \epsilon(E) \searrow 0$ (as $E \nearrow E_0$), different from the previous $\epsilon$, such that
\begin{align*}
\sigma^+ \leq \delta(E)^{-\epsilon}.
\end{align*}
The factors $\max_{0 \leq j \leq \sigma^+} |\deriv (s_j)|$ can be dealt with by considering the expression in \cref{DerivBackwardExpression},
\begin{align*}
\deriv s_{-k} &= (\deriv s_0)s_{-k}^2 \cdots s_{-1}^2 - \lambda^2 \sum \limits_{j=1}^{k} v'(\theta_{-j}) s_{-j}^2 \cdots s_{-k}^2 =\\
&= \frac{\deriv s_0}{D_{-k, -1}(s_0, s_0)} - \lambda^2 \sum \limits_{j=1}^{k+1} \frac{v'(\theta_{-j})}{D_{-k, -j}(s_0, s_0)}.
\end{align*}
Let $\theta_0 \in I$. Since we wish to estimate $|\deriv s_j|$ for $0 \leq j \leq \sigma^+$, we consider any $k > 0$ satisfying that $\theta_{-k} \in \bigcup \limits_{m = 0}^{\widehat{\sigma}^+} I + m\omega$. Since the iterates of $I$ cover the circle, every $\theta$ in the union is in fact $\theta_{-k}$ for some $\theta_0 \in I$ and some $k > 0$. That is, every $s_j$ we consider is simply the backward iterate of some $\widetilde{s}_0 \in I$. Then \cref{AssSTReturnBounds} gives us that $k \geq 14\sigma^+_E \gg \eta(E)\sigma^+_E$. Therefore \cref{AssForwardTail} applies, and we obtain
\begin{align*}
D_{-k, -1}(s_0, s_0) \geq \lambda^{k/2}.
\end{align*}
As before, we divide the sum into two parts, one behaving like a geometric sum (when $k - j \geq \eta(E)\sigma^+_E$), and another when $k - j < \eta(E)\sigma^+_E$. The part behaving like a geometric sum gives a contribution that is uniformly bounded. The interesting part is therefore $k - j < \eta(E)\sigma^+$, and it can be bounded using the trivial estimate
\begin{align*}
D_{-k, -j}(s_0) \geq \lambda^{-4\eta(E)\sigma^+}.
\end{align*}
Since $\sigma^+ \sim \log \frac1{\delta(E)}$, and $\eta \searrow 0$, there is an $\epsilon \searrow 0$ (as $E \nearrow E_0$) such that
\begin{align*}
\Big| \sum \limits_{j=k - \eta(E)\sigma^+ - 1}^{k+1} \frac{v'(\theta_{-j})}{D_{-k, -j}(s_0, s_0)} \Big| \leq \eta(E)\sigma^+ \lambda^{4\eta(E)\sigma^+} \leq \delta(E)^{-\epsilon},
\end{align*}
and therefore the whole sum behaves like
\begin{equation*}
\sum \limits_{j=1}^{k+1} \frac{v'(\theta_{-j})}{D_{-k, -j}(s_0, s_0)} \leq const + \delta(E)^{-\epsilon} \sim \delta(E)^{-\epsilon}.
\end{equation*}
Since $\deriv s_0$ is uniformly bounded on $I$, w.r.t. $E$ (see \cref{AssDerivativeBoundsCriticalInterval}), it follows that $\deriv s_{-k}$ satisfies the bound
\begin{align*}
|\deriv s_{-k}| \leq \delta(E)^{-\epsilon},
\end{align*}
where $\epsilon$ is positive, distinct from the other $\epsilon$ above, and $\epsilon \searrow 0$ as $E \nearrow E_0$. Therefore, we have
\begin{align}
\max |\deriv \repPlain(\theta)| \leq \delta(E)^{-\epsilon},\label{RepellorDerivativeSmall}
\end{align}
where the maximum is taken over the set $\theta \in \bigcup \limits_{m = 1}^{\sigma^+} I + m\omega$. That is,
\begin{align*}
|R_{0, \sigma^+}(r_0, s_0)| &\leq \delta(E)^{-\epsilon}.
\end{align*}

\subsection{Locating the global maximum}

Putting everything together in the previous subsections, we obtain the inequality
\begin{align*}
const \cdot \bigg( \frac1{\sqrt{\delta(E)}} \bigg)^{1 - \epsilon} \leq \max \limits_{\theta_0 \in I, 0 \leq k \leq \sigma^+ + 1} \deriv (r_k - s_k) \leq const \cdot \frac1{\sqrt{\delta(E)}},
\end{align*}
where the constant is uniformly bounded away from 0, and $\epsilon \searrow 0$ as $E \nearrow E_0$. By the estimate in \cref{RepellorDerivativeSmall}, it follows that $\deriv r_k$ is the dominant term in the maximum, and therefore
\begin{align*}
const \cdot \bigg( \frac1{\sqrt{\delta(E)}} \bigg)^{1 - \epsilon} \leq \max \limits_{\theta_0 \in I, 0 \leq k \leq \sigma^+ + 1} \deriv r_k \leq const \cdot \frac1{\sqrt{\delta(E)}}.
\end{align*}
By a simple argument, we will show that this is in fact (essentially) the maximum. Since $\frac1{r_k^2} \leq \frac1{r_ks_k} = D_k(r_0, s_0)$, we have for every $k \geq \sigma^+ + 1$ that
\begin{align*}
|\deriv r_{k+1}| \leq |\deriv r_{\sigma^+ + 1}| D_{\sigma^+ + 1, k}(r_0, s_0) + \lambda^2 |v'(\theta_k)| + \lambda^2 \sum \limits_{j = \sigma^+ + 1}^{k-1} |v'(\theta_j)| D_{j + 1, k}(r_0, s_0).
\end{align*}
Since $r_{\sigma^+ + 1} - s_{\sigma^+ + 1} \geq \lambda^{-3}$, and we always have $r_k - s_k \leq \lambda^2$, it follows that $D_{\sigma^+ + 1, k}(r_0, s_0) \leq \lambda^5$. That is, for every $k \geq \sigma^+ + 1$, we have
\begin{align*}
|\deriv r_{k+1}| \leq const + \lambda^5|\deriv r_{\sigma^+ + 1}|.
\end{align*}
Since the forward iterates of $I$ cover the circle, this means that we get the following result.

\begin{prop}\label{prop: DerivativeAsymptotics}
There is a positive $\epsilon = \epsilon(E)$ satisfying $\lim \limits_{E \nearrow E_0} \epsilon = 0$, and a $C_2 > 0$ independent of $E$, such that
\begin{align*}
\frac1{C_2} \cdot \bigg( \frac1{\sqrt{\delta(E)}} \bigg)^{1 - \epsilon} \leq \max |\deriv \attrPlain_E| \leq C_2 \cdot \frac1{\sqrt{\delta(E)}}.
\end{align*}
\end{prop}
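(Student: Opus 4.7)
The plan is to apply the decomposition \cref{ForwardDerivativeDifferenceFormula} to $r_0 = \attrPlain_E(\theta)$ and $s_0 = \repPlain_E(\theta)$ at time $k = \sigma^+(\theta)$,
\[
\deriv(r_{\sigma^+ + 1} - s_{\sigma^+ + 1}) = (r_{\sigma^+ + 1} - s_{\sigma^+ + 1})\frac{\deriv(r_0 - s_0)}{r_0 - s_0} \Pi_{0, \sigma^+}(s_0, r_0) + R_{0, \sigma^+}(r_0, s_0),
\]
show that the first term realizes the desired $\sim 1/\sqrt{\delta(E)}$ asymptotic (up to a $\delta(E)^\epsilon$ loss on the lower bound), verify that the remainder term $R$ is of strictly smaller order, and finally argue that this local maximum essentially governs the global $C^1$-norm of $\attrPlain_E$.

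For the dominant term I would proceed at a carefully chosen base point $\theta_0 \in I$. The factor $r_{\sigma^+ + 1} - s_{\sigma^+ + 1}$ is trivially of order one, bounded above by the invariance $B = [\lambda^{-2}, \lambda^2]$ and below by the definition of $\sigma^+$. The distortion factor $\Pi_{0,\sigma^+}(s_0, r_0)$ is controlled through \cref{DistortionFactorInequality} by $\sum_{j=0}^{\sigma^+} 1/D_{j,\sigma^+}(r_0, s_0)$; I would split this sum at $j = \widehat{\sigma}^+$, using \cref{AssGrowthForwardCloseToST} for a uniformly bounded geometric tail on $[0, \widehat{\sigma}^+]$ and the trivial bound $D_{j,\sigma^+} \geq 1$ for the residual block of length $\leq \eta(E)\sigma^+_E$. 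Combined with $\sigma^+_E \lesssim \log(1/\delta(E))$ (which follows from \cref{AssGrowthForwardFromInterval} applied to $\theta$ attaining $r_0 - s_0 \sim \delta(E)$), this yields $\Pi_{0,\sigma^+}(s_0, r_0) \geq \delta(E)^\epsilon$ with $\epsilon = \epsilon(E) \to 0$. For the factor $\deriv(r_0 - s_0)/(r_0 - s_0)$, I would use the quadratic profile \cref{AssAlmostQuadratic}: writing $(\theta_0 - \theta_c)^2 \sim \delta(E)^{2\beta}$, this factor is comparable to $\delta(E)^\beta/(\delta(E) + \delta(E)^{2\beta})$, maximized at $\beta = \tfrac12$ with value $\sim \delta(E)^{-1/2}$. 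The crucial input here is the length bound \cref{eq: AssIntervalLength}: $|I| \geq l(E)\sqrt{\delta(E)}$ with $l(E) \to \infty$ ensures some $\theta_0 \in I$ actually realizes $\beta = \tfrac12$ for $E$ close enough to $E_0$.

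Next I would bound the remainder via \cref{ForwardDerivativeRemainderTermUpperBound}, reducing it to controlling $\sigma^+ \cdot \max_{j \leq \sigma^+}|\deriv s_j|$. The stopping-time factor satisfies $\sigma^+ \leq \delta(E)^{-\epsilon}$ by \cref{STBound}. For the derivative bounds, I would represent every such $s_j$ as a backward iterate from some $\widetilde{s}_0 \in I$ (legitimate because the forward iterates of $I$ cover the circle) and expand $\deriv \widetilde{s}_{-k}$ using \cref{DerivBackwardExpression}. The return separation \cref{AssSTReturnBounds} gives $k \gg \eta(E)\sigma^+_E$, so \cref{AssForwardTail} turns the main part of the sum into a uniformly convergent geometric series; the last $\eta(E)\sigma^+_E$ terms, bounded via the trivial $D \geq \lambda^{-4\eta \sigma^+}$, contribute only an additional $\delta(E)^{-\epsilon}$. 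Together with the uniform $C^1$-bound \cref{AssDerivativeBoundsCriticalInterval} on $I$, this gives $|\deriv s_j| \leq \delta(E)^{-\epsilon}$, so $R = o(1/\sqrt{\delta(E)})$.

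Finally, to pass from the local maximum (at the forward iterate $\theta_{\sigma^+ + 1}$ of some $\theta_0 \in I$) to the global $C^1$-norm of $\attrPlain_E$, I would iterate the recursion $\deriv r_{k+1} = (\deriv r_k)/r_k^2 - \lambda^2 v'(\theta_k)$ past $\sigma^+ + 1$. Once the curves are decorrelated, $r_k - s_k \geq \lambda^{-3}$ keeps $D_{\sigma^+ + 1, k}(r_0, s_0) \leq \lambda^5$, so $|\deriv r_{k+1}| \leq \mathrm{const} + \lambda^5 |\deriv r_{\sigma^+ + 1}|$; combined with the fact that iterates of $I$ cover $\T$, this ensures the estimate obtained above is (up to constants) the global one. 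The hard part is the distortion block $(\widehat{\sigma}^+, \sigma^+]$: this is precisely where non-uniform hyperbolicity leaks in, and the resulting $\delta(E)^\epsilon$ loss in the lower bound is the reason the exponent $\tfrac12$ cannot generally be attained without loss.
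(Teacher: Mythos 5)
Your proposal follows the paper's own proof essentially step for step: the same decomposition via \cref{ForwardDerivativeDifferenceFormula} at time $\sigma^+$, the same treatment of the distortion factor $\Pi_{0,\sigma^+}$ through \cref{DistortionFactorInequality} with the split at $\widehat{\sigma}^+$, the same use of the quadratic profile \cref{AssAlmostQuadratic} and length bound \cref{eq: AssIntervalLength} to realize $\beta = \tfrac12$, the same backward-iterate argument for the remainder via \cref{DerivBackwardExpression}, \cref{AssSTReturnBounds}, and \cref{AssForwardTail}, and the same global-maximum argument using $D_{\sigma^++1,k}\leq\lambda^5$ past the stopping time. The approach is correct and matches the paper; the only thing left slightly implicit is the passage from the bound on $\deriv(r_{\sigma^++1}-s_{\sigma^++1})$ to the bound on $\deriv r_{\sigma^++1}$ itself, which follows from your $|\deriv s_j|\leq\delta(E)^{-\epsilon}$ estimate just as in \cref{RepellorDerivativeSmall}.
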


Using the exact same arguments, but iterating the other direction, we can also prove that
\begin{align*}
\frac1{C_2} \cdot \bigg( \frac1{\sqrt{\delta(E)}} \bigg)^{1 - \epsilon} \leq \max |\deriv \repPlain_E| \leq C_2 \cdot \frac1{\sqrt{\delta(E)}},
\end{align*}
where the constant is uniformly bounded away from 0, and $\epsilon \searrow 0$ as $E \nearrow E_0$.

This concludes the proof of the second part of \cref{MainResult}.

\section{Proof of assumptions}\label{SecSettingUp}

In this section we will derive \cref{AssInvGraphs,AssInterval,AssST} from \cref{InductionResult}. We will use all the notation from that section. Most of the statements in this section assume that $\lambda$ is large enough. In this section, we will therefore assume that $\lambda$ is large enough (depending only on $\omega$ and the potential $v$) for \cref{InductionResult}, and all the statements contained within this section, to hold. We observe the following:
\begin{align*}
M_0 &\to \infty \text{ and}\\
\frac{M_0}{N_0} &= o(1),
\end{align*}
as $\lambda \to \infty$ (see \cref{InitialScaleConstantsRatio}, and the line after it). The sequence $M_i$ therefore grows super-exponentially fast if $\lambda$ is large, since
\begin{align}
M_i \approx \lambda^{M_{i-1}/(4\tau)}\label{SuperExponentiaGrowthM}
\end{align}
for every $i \geq 1$. Moreover, the return bounds in \cref{DiophantineReturnBounds} imply that
\begin{align}
M_i \approx \sqrt{N_i}\label{SuperExponentialRatioMN}
\end{align}
for every $i \geq 0$. For every $n \geq -1$, set
\begin{align}
\UE_n = [E_n^-, E_{n+1}^-) \subset \E_n \backslash \E_{n+1},
\end{align}
where we use the notation $\E_n = [E^-_n, E^+_n]$. It is worth noting that $\bigcup \limits_{n \geq -1} \UE_n = [-1, E_0)$, where $E_0$ is the lowest energy of the spectrum. That is, given an $E \in [-1, E_0)$, there is a fixed $n \geq -1$ such that $E \in \UE_n$.

We now state a stronger condition that will be satisfied in these energy intervals. The condition is essentially an extension of $\CondFirst_n$ in \cref{SecInduction} to iterates past $I_n$.

\vspace{0.8em}

\noindent\textbf{Condition} $\CondUH_n$

\vspace{0.3em}

\noindent Condition $\CondFirst_m$ and $\CondSecond_m$ for every $m \leq n$, together with the following conditions:
\begin{enumerate}
\item Suppose that $(\theta_0, r_0) \in \Theta_{n} \times B^u$, then for every integer $k$
\begin{equation}
\begin{aligned}
r_k &\in B,\\
r_k &\not\in B^u \implies \theta_k \in \Xi^u_{n} = \bigcup \limits_{i=0}^{n} \bigcup \limits_{m=1}^{M_i} (I_i + m\omega).\label{UHForward}
\end{aligned}
\end{equation}
\item Suppose that $(\theta_0, r_0) \in \Theta_{n} \times B^s$, then for every integer $k$
\begin{equation}
\begin{aligned}
r_{-k} &\in B,\\
r_{-k} &\not\in B^s \implies \theta_{-k} \in \Xi^s_{n} = \bigcup \limits_{i=0}^{n} \bigcup \limits_{m=0}^{M_i} (I_i - m\omega).\label{UHBackward}
\end{aligned}
\end{equation}
\end{enumerate}
Later on, we shall show that this condition is satisfied for every $E \in \UE_{n-1}$, and $n \geq 0$.

\subsection{Proving \cref{AssInvGraphs}}

Here we prove that, for every $n \geq 1$, \textbf{Condition} $\CondUH_n$ is satisfied for every $E \in \UE_{n-1}$. We will show how this implies the existence of two invariant functions (curves) $\attrPlain, \repPlain: \T \to B$, for every $E \in \E = \bigcup \limits_{n = 0}^\infty \UE_n$.

The following result is crucial to the whole construction. It allows us to analyse the dynamics for all times, and establish uniform hyperbolicity. This result is implicit in the construction used in \cref{InductionResult}, but not explicitly stated in that paper.

\begin{lemma}\label{LemUESatisfied}
Suppose that $n \geq 1$, and that $E \in \UE_{n-1}$. Then \textbf{Condition} $\CondUH_n$ is satisfied, and
\begin{align*}
A^u_{n-1} &\cap A^s_{n-1} \neq \emptyset, \\
A^u_n &\cap A^s_n = \emptyset.
\end{align*}
\end{lemma}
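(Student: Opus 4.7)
The proof splits into three pieces: the two box-intersection statements, the base conditions $\CondFirst_m, \CondSecond_m$ for $m \leq n$, and the two extended hyperbolicity statements \cref{UHForward,UHBackward}.

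For the box intersections, the plan is to unwind the definition of the energy intervals from the inductive construction recalled in \cref{SecInduction}. By that construction, $\E_n$ is precisely the set of energies at which the boxes $A^u_n$ and $A^s_n$ still overlap, which is exactly what forces refinement to scale $n+1$. Since $E \in \UE_{n-1} = [E_{n-1}^-, E_n^-) \subset \E_{n-1} \setminus \E_n$, we obtain $E \in \E_{n-1}$ (so $A^u_{n-1} \cap A^s_{n-1} \neq \emptyset$) and $E \notin \E_n$ (so $A^u_n \cap A^s_n = \emptyset$).

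For the base conditions at scales $m \leq n$, the approach is to invoke the nested structure $\E_{n-1} \subset \E_{n-2} \subset \cdots \subset \E_{-1}$. Since $E \in \E_{n-1}$, the inductive hypothesis of \cref{InductionResult} delivers $\CondFirst_m$ and $\CondSecond_m$ for every $-1 \leq m \leq n-1$. For the remaining case $m = n$, the key observation is that the scale-$n$ step is carried out for every $E \in \E_{n-1}$, regardless of whether the scale-$n$ boxes eventually intersect; the overlap test is performed only after $\CondFirst_n$ and $\CondSecond_n$ have been established. Hence those hold for $E \in \UE_{n-1}$ as well.

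The main work is in the extensions \cref{UHForward,UHBackward}. I sketch the forward case; the backward case is symmetric and uses the mirror half of \cref{NextInGoodIfNotInBad}. Fix $(\theta_0, r_0) \in \Theta_n \times B^u$ and track the orbit $(\theta_k, r_k)$. The goal is to show that $r_k \in B$ for every $k$, and that $r_k \notin B^u$ can only occur when $\theta_k$ lies in a short recovery window after a passage through some $I_i$ with $i \leq n$. Two ingredients drive this. First, since $B^u \cap B^s = \emptyset$, the forward clause of \cref{NextInGoodIfNotInBad} shows that once $r_k \in B^u$ and $\theta_k \notin I_0$, we have $r_{k+1} \in B^u$. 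Second, when $\theta_k$ does enter some $I_i$, the condition $\CondFirst_i$ (already verified) describes the "escape" from $I_i$: after at most $M_i$ iterates the orbit has returned to $B^u$. Alternating these two regimes shows that the only $\theta_k$'s for which $r_k \notin B^u$ lie in $\bigcup_{i=0}^n \bigcup_{m=1}^{M_i}(I_i + m\omega) = \Xi^u_n$; the assumption $\theta_0 \in \Theta_n$ ensures in particular that the starting time is not already inside such a window. The confinement $r_k \in B$ follows once we know $A^u_n \cap A^s_n = \emptyset$, which provides a proper fibre-wise invariant region inside $B$.

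The main obstacle I anticipate is the inductive bookkeeping in this last step: one needs to check that after an escape window of length $M_i$ following a visit to $I_i$, the next entry of the orbit into any $I_j$ (with $j \leq n$) occurs late enough that the orbit has safely returned to $B^u$ in between, so that the "recovery windows" from successive visits to bad intervals at possibly different scales do not overlap destructively. This relies on the super-exponential separation $M_i \approx \lambda^{M_{i-1}/(4\tau)}$ from \cref{SuperExponentiaGrowthM} together with the Diophantine return bound of \cref{DiophantineReturnBounds}, both of which have been put in place precisely to make this accounting work out.
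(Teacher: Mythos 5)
Your three-way decomposition matches the paper, and the first two pieces are essentially right. For the box intersections, the paper, like you, does not reprove the fact but attributes it to the construction in \cite[Lemma 5.3]{BjerkSchrLowEnergy}; your reading of $\UE_{n-1}\subset\E_{n-1}\setminus\E_n$ is the right heuristic. For the base conditions, note the inductive lemma (\cref{InductionResult}) already asserts $\CondFirst_n$ for every $E\in\E_{n-1}$, so the $m=n$ case needs no special pleading: $E\in\UE_{n-1}\subset\E_{n-1}\subset\E_{m-1}$ gives $\CondFirst_m$ for all $m\le n$ directly, and $\CondSecond_m$ is $E$-independent.

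The real gap is in the extension step. You assert that ``when $\theta_k$ does enter some $I_i$, the condition $\CondFirst_i$ describes the escape from $I_i$: after at most $M_i$ iterates the orbit has returned to $B^u$.'' This is not what $\CondFirst_i$ says: $\CondFirst_i$ tracks the orbit only \emph{up to the first arrival} at $I_i$ and is silent about what happens afterward. For scales $i<n$, escape from $I_i$ is absorbed into the exceptional set $\Xi^u_{n-1}$ in the conclusion of $\CondFirst_n$, so those visits are fine. But escape from $I_n$ itself is not controlled by any $\CondFirst_m$, $m\le n$ --- that is precisely why the disjointness $A^u_n\cap A^s_n=\emptyset$ must enter here, not merely as a standalone conclusion. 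The paper's argument is: at the first return time $T_0$ to $I_n$, one shows (using $\CondSecond_n$ and $\CondFirst_n$) that $(\theta_{T_0-M_n},r_{T_0-M_n})\in B^u_n$; iterating forward, $(\theta_{T_0+1},r_{T_0+1})\in A^u_n$; since $A^u_n\cap A^s_n=\emptyset$ and $\Phi^{M_n-1}(A^s_n)=B^s_n$, one gets $r_{T_0+M_n}\notin B^s$ over $I_n+M_n\omega\subset\Theta_{n-1}$; then \cref{NextInGoodIfNotInBad} yields $r_{T_0+M_n+1}\in B^u$ and $\theta_{T_0+M_n+1}\in\Theta_{n-1}$, so $\CondFirst_n$ can be re-invoked and the process iterated over successive returns $T_l$. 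Without this box-and-$\cref{NextInGoodIfNotInBad}$ recovery mechanism at scale $n$, your induction has nothing to restart from after the first visit to $I_n$, and the proof does not close.

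A smaller point: you attach $A^u_n\cap A^s_n=\emptyset$ to the claim $r_k\in B$, but the confinement to $B$ is already part of the conclusion of $\CondFirst_n$; the disjointness is used for the recovery into $B^u$ as above, not for staying in $B$.
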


\begin{proof}
Since $E \in \UE_{n-1} \subset \E_{n-1} \subset \cdots \subset \E_{-1}$, \cref{InductionResult} implies the conditions $\CondFirst_m$ and $\CondSecond_m$ are satisfied for every $m \leq n$. Using the same methods as above (and below), one can show that
\begin{align*}
A^u_{n-1} &\cap A^s_{n-1} \neq \emptyset,\text{ and}\\
A^u_n &\cap A^s_n = \emptyset.
\end{align*}
The sets $A^u_m$ and $A^s_m$ were constructed precisely to satisfy this, when $E \in \UE_{n-1}$, but the statement of this fact is buried in the proof of \cite[Lemma 5.3]{BjerkSchrLowEnergy}. Since the proof of this is technical, and would add nothing new, we have chosen to exclude it.

In order to check the rest of $\CondUH_{n}$, suppose that $(\theta_0, r_0) \in \Theta_{n} \times B^u$, and let
\begin{align*}
0 < T_0 < \cdots < T_k < \cdots
\end{align*}
be the return times to $I_n$ for $\theta_0$. It is clear from $\CondFirst_n$ that, for every $0 \leq k \leq T_0$,
\begin{align*}
r_k &\in B,\\
r_k &\not\in B^u \implies \theta_k \in \Xi^u_{n} = \bigcup \limits_{i=0}^{n} \bigcup \limits_{m=1}^{M_i} (I_i + m\omega).
\end{align*}
Since $\theta_0 \in \Theta_{n} = \bigcup \limits_{i=0}^{n} \bigcup \limits_{m=1}^{M_i} (I_i + m\omega)$, it follows that $T_0 > M_n$. Therefore, there is a time $0 \leq t < T_0$ such that $\theta_t \in I_n - M_n\omega$. By $\CondSecond_n$, $\theta_t \in \Theta_{n-1}$, and therefore $\CondFirst_n$ ensures that $r_t \in B^u$, since $t < T_0$ (the first return to $I_n$). That is, $(\theta_t, r_t) \in B^u_n$, and ultimately, $(\theta_{T_0 + 1}, r_{T_0 + 1}) \in A^u_n$. Since $A^u_{n} \cap A^s_{n} = \emptyset$, and $\Phi^{M_n -1}(A^s_n) = B^s_n$, it follows that
\begin{align*}
\Phi^{M_n -1}(\theta_{T_0 + 1}, r_{T_0 + 1}) = (\theta_{T_0 + M_n}, r_{T_0 + M_n}) \in (I_n + M_n\omega) \times (B \backslash B^s).
\end{align*}
By $\CondSecond_n$, $\theta_{T_0 + M_n} \in \Theta_{n-1}$, and by \cref{NextInGoodIfNotInBad}, $r_{T_0 + M_n + 1} \in B^u$. Note that for every $T_0 + 1 \leq k \leq T_0 + M_n$,
\begin{align*}
\theta_k \in \bigcup \limits_{m=1}^{M_n} (I_n + m\omega) \subset \Xi^u_{n}.
\end{align*}
Now, $\CondSecond_n$ implies that $\theta_{T_0 + M_n + 1} \in \Theta_{n-1}$, and therefore $(\theta_{T_0 + M_n + 1}, r_{T_0 + M_n + 1}) \in \Theta_{n-1} \times B^u$. By induction, we show that for arbitrary $l > 0$, and every $0 \leq k \leq T_l$,
\begin{align*}
r_k \not\in B^u \implies \theta_k \in \Xi^u_{n} = \bigcup \limits_{i=0}^{n} \bigcup \limits_{m=1}^{M_i} (I_i + m\omega).
\end{align*}
Condition $\CondUH_{n}$ now follows.
\end{proof}

\begin{lemma}\label{ExistenceOfInvariantGraphs}
Suppose that $E \in \UE_{n-1}$. Then there are two invariant $C^2$ functions $\attrPlain, \repPlain: \T \to B$ satisfying for every $\theta \in \T$
\begin{align*}
\repPlain(\theta) < \attrPlain(\theta),
\end{align*}
such that $\psi^u$ is uniformly attracting, and $\psi^s$ is uniformly repelling (in a neighbourhood). Furthermore
\begin{align}
\attrPlain(\theta) \not\in B^u \implies \theta \in \Xi^u_{n} = \bigcup \limits_{i=0}^{n} \bigcup \limits_{m=1}^{M_i} (I_i + m\omega) \label{WhenAttractorExpanding}\\
\repPlain(\theta) \not\in B^s \implies \theta \in \Xi^s_{n} = \bigcup \limits_{i=0}^{n} \bigcup \limits_{m=0}^{M_i} (I_i - m\omega). \label{WhenRepellorContracting}
\end{align}
\end{lemma}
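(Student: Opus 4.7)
\medskip

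\noindent\textbf{Proof plan.}
The plan is to use condition $\CondUH_n$ (just established) to promote the cocycle to a uniformly hyperbolic one, then harvest the two invariant curves fibrewise by contraction, and finally read off the locations and regularity.

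\emph{Construction of $\attrPlain$.} For a fixed $\theta \in \T$, I would look backwards along the orbit and find an increasing sequence of integers $0 \leq k_1 < k_2 < \cdots$ with $\theta - k_j \omega \in \Theta_n$; such a sequence exists because $\Theta_n$ has positive measure and $\omega$ is irrational. For each $j$, set
\[
J_j(\theta) = \pi_2 \bigl( \Phi^{k_j}(\{\theta - k_j \omega\} \times B^u) \bigr) \subset B.
\]
By \CondUH$_n$ applied to $(\theta - k_j \omega, r_0)$ with $r_0 \in B^u$, each $J_j(\theta)$ is contained in $B$, and one has $J_{j+1}(\theta) \subset J_j(\theta)$ by the semigroup property. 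To get contraction of diameters, I would note that along the backward orbit, points of $\Theta_n$ with fibres in $B^u$ contribute a contraction factor $\leq \lambda^{-2}$ to the projective map $r \mapsto \lambda^2 v - E - 1/r$, whereas the exceptional iterates in $\Xi^u_n$ contribute at worst a bounded power of $\lambda$. Since the density of exceptional returns is $\sum_i M_i / N_i = o(1)$ (by \cref{InitialScaleConstantsRatio} and \cref{SuperExponentialRatioMN}), the net contraction per unit time is $\leq \lambda^{-1/2}$, so $|J_j(\theta)| \to 0$ exponentially. Define $\attrPlain(\theta)$ to be the unique point in $\bigcap_j J_j(\theta)$. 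Invariance $\attrPlain(\theta + \omega) = \Phi(\theta, \attrPlain(\theta))$ is then automatic from the construction.

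\emph{Construction of $\repPlain$ and the inequality $\repPlain < \attrPlain$.} A symmetric argument, using the second half of $\CondUH_n$ and the backward strong expansion inside $B^s$ (equivalently, contraction of $\Phi^{-1}$ in $B^s$), produces $\repPlain$. To separate the two curves, I would use that \cref{LemUESatisfied} gives $A^u_n \cap A^s_n = \emptyset$, and more precisely $\phi^{s,+}_n < \phi^{u,-}_n$ on $I_n + \omega$; since $\attrPlain(\theta) \in A^u_n$ and $\repPlain(\theta) \in A^s_n$ for every $\theta \in I_n + \omega$, one gets $\repPlain < \attrPlain$ on this set, and then on all of $\T$ by propagating through iteration (monotonicity of the fibre maps on the orientation-preserving set $B$, as noted after \cref{DistanceNextStep}).

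\emph{Regularity and attracting/repelling nature.} For $C^2$ smoothness, I would run the standard graph-transform argument in a cone of $C^2$ curves over $\T$ taking values in $B^u$ (respectively $B^s$): one checks, using the explicit derivatives $\partial_r \Phi = 1/r^2$, $\partial_\theta \Phi = \lambda^2 v'(\theta)$ and their second-order analogues, that the contraction gained on a full return cycle (roughly $\lambda^{-N_n/2}$) dominates the polynomial-in-$\lambda$ distortion factors. Here the large-$\lambda$ hypothesis is crucial. Uniform attraction of $\attrPlain$ (respectively repulsion of $\repPlain$) in the projective coordinate then comes from the same exponential contraction (expansion) estimate used for the diameter bound.

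\emph{Locating the exceptional set.} Finally, \cref{WhenAttractorExpanding,WhenRepellorContracting} follow from $\CondUH_n$ combined with invariance. For $\attrPlain$: given any $\theta$, pick $k \geq 0$ with $\theta_0 := \theta - k\omega \in \Theta_n$; the construction gives $\attrPlain(\theta_0) \in B^u$, so if $\attrPlain(\theta) = r_k \notin B^u$ then the implication in \cref{UHForward} forces $\theta = \theta_k \in \Xi^u_n$. The repelling case is identical using \cref{UHBackward}.

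\emph{Main obstacle.} The delicate step is the $C^2$ smoothness: continuity and Hölder regularity are free from uniform hyperbolicity, but $C^2$ requires that the hyperbolicity rate beats twice the expansion of first-order distortion. Making this quantitative means estimating very carefully the propagation of second derivatives through the exceptional windows $\Xi^u_n$, where the local contraction in $B^u$ temporarily fails; the pay-off of the inductive scales $M_i \sim \sqrt{N_i}$ is precisely that exceptional iterates are logarithmically rare compared to good ones, which is what saves the $C^2$ estimate in the end.
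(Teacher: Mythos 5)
Your proposal is essentially the same as the paper's proof: pull back from the favourable region $\Theta_n \times B^u$ (using $\CondUH_n$ from \cref{LemUESatisfied}), control the frequency of bad returns via the interval-system estimates, conclude fibrewise contraction, and read off \cref{WhenAttractorExpanding,WhenRepellorContracting} from the construction. The chief presentational difference is in how the $C^2$ regularity is obtained: the paper verifies that $\widetilde{\Phi} = \Phi^{100M_n}$ is a fibre contraction on the invariant set $\Lambda = \bigcup_{k=0}^{2M_n+1}\Phi^k(\Theta_n\times B^u)$ and then invokes Stark's theorem on regularity of attracting invariant graphs, whereas you propose running the graph-transform estimates by hand; both are fine, and citing Stark avoids the somewhat delicate second-derivative bookkeeping you flag at the end. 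You are also more explicit than the paper about the separation $\repPlain < \attrPlain$ (using $A^u_n \cap A^s_n = \emptyset$ and orientation preservation); the paper leaves this step implicit, and your version is a useful clarification, though you should justify that $\attrPlain(\theta-(M_n+1)\omega)\in B^u$ for $\theta\in I_n+\omega$ — this follows from \cref{WhenAttractorExpanding} once one checks that $I_n - M_n\omega$ is disjoint from $\Xi^u_n$ (which holds since $M_n \ll N_n$ and by $\CondSecond_n$).
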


\begin{proof}
Consider our set $\Theta_{n} = \T \backslash (\bigcup \limits_{i=0}^{n} \bigcup \limits_{m=-M_i}^{M_i} (I_i + m\omega))$, for which it holds that $\bigcup \limits_{k=0}^{2M_n + 1} (\Theta_n + k\omega) = \T$. In particular, $\CondUH_{n}$ is satisfied (by \cref{LemUESatisfied}), which implies that the set
\begin{align*}
\Lambda = \bigcup \limits_{k=0}^{2M_n + 1} \Phi^k(\Theta_n \times B^u)
\end{align*}
is invariant. Suppose that we have the two initial conditions $(\theta_0, r_0), (\theta_0, r'_0) \in \Lambda$. Then $\theta_k \in \Theta_n \implies r_k, r'_k \in B^u$. Set $\Sigma_k = \bigcup \limits_{m=-M_k}^{M_k} (I_k - m\omega)$, and $t = 100M_n \ll N_n$. Then $\Sigma_k$ is an $(N_k - 2M_k - 1, 2M_k + 1)$-system, and \cref{TimeSpentInIntervalSystem} implies that
\begin{align*}
\frac{| \{ 0 \leq j < t : \theta_j \not\in \Theta_n \} |}{t} \leq \sum \limits_{k = 0}^{n} \frac{2M_k + 1}{100M_n} + \frac{2M_k + 1}{N_k - 2M_k - 1} \leq \frac3{100} \sum \limits_{k = 0}^{n} \frac{M_k}{M_n} \leq \frac1{10}.
\end{align*}
For $t = 100M_n$, \cref{ForwardExponentEstimates} gives us
\begin{align*}
\frac1{r_0 \cdots r_{t-1}} \leq \lambda^{-t/2}, \\
\frac1{r'_0 \cdots r'_{t-1}} \leq \lambda^{-t/2}.
\end{align*}
Since $r_t - r'_t = \frac{r_0 - s_0}{r_0s_0 \cdots r_{t-1}s_{t-1}}$, this means that
\begin{align}
r_t - r'_t = \frac{r_0 - r'_0}{r_0r'_0 \cdots r_{t-1}r'_{t-1}} \leq (r_0 - r'_0) \lambda^{-t},\label{ContractedDistance}
\end{align}
and so $\widetilde{\Phi} = \Phi^{100M_n}$ is a fibre contraction on $\Lambda$. This gives us a $\mathcal{C}^2$-family of attracting invariant curves $\attrPlain_E: \UE_{n-1} \times \T \to B$ (see for instance \cite[Theorems 2.1 and 3.1]{StarkRegularityQPF}). We do the same thing, but for $\Phi^{-1}$, to obtain our repelling curves $\repPlain_E: \UE_{n-1} \times \T \to B$. By construction, they satisfy the conditions in $\CondUH_{n}$.
\end{proof}

\subsection{The interval $I(E)$ and the stopping times $\sigma^\pm$} \label{sec: StoppingTimes}

The obvious way of constructing these intervals would be to let $I(E) = I_n + \omega$, if $E \in \UE_{n-1}$. However, our method performs badly close to the endpoints of $\UE_n$. The reason is that, the time taken for $\attrPlain(\theta_k)$ to stabilise in $B^u$, if $\theta_0 \in I_n$, is very similar to the time taken to stabilise if $\theta_0 \in I_{n-1}$. Since points starting in $I_n$ could potentially enter $I_{n-1}$ before they stabilise in $B^u$, according to $\CondUH_n$ and $\CondFirst_n$, this appears to create a double-resonance.

This resonance will never occur, but this is not obvious the way the conditions are formulated. We circumvent this by being flexible with our scales; if we are close to the lowest energy $E^-_{n-1}$ of $\UE_{n-1}$, we simply slide the scale to use the previous one, that is $I_{n-1}$, rather than the one given to us by the induction statement, that is $I_n$. In order to determine when we can slide the scales, we introduce some stopping times:

Suppose that $E \in \UE_{n-1}$, and $\theta_0 \in I_0 + \omega$. Let $\sigma^+ = \sigma^+(\theta, E) \geq 0$ be the smallest positive integer satisfying
\begin{align*}
|(\attrPlain_E - \repPlain_E)(\theta_0 + (\sigma^+ + 1)\omega)| \geq \lambda^{-3} \\
|(\attrPlain_E - \repPlain_E)(\theta_0 + j\omega)| < \lambda^{-3}
\end{align*}
for every $0 \leq j \leq \sigma^+$. Similarly, let $\sigma^- = \sigma^-(\theta, E) \geq 0$ be the smallest positive integer satisfying
\begin{align*}
|(\attrPlain_E - \repPlain_E)(\theta_0 - (\sigma^- + 1)\omega)| \geq \lambda^{-3} \\
|(\attrPlain_E - \repPlain_E)(\theta_0 - j\omega)| < \lambda^{-3}
\end{align*}
for every $0 \leq j \leq \sigma^-$.By \cref{LemUESatisfied}, $E \in \UE_{n-1}$ implies $\CondUH_n$, which implies that the stopping times are well-defined. Indeed, $\Theta_n$ is non-empty, and by $\CondUH_n$ we have $\theta \in \Theta_n \implies \attrPlain_E(\theta) \in B^u, \repPlain_E(\theta) \in B^s$, and therefore $|\attrPlain_E(\theta) - \repPlain_E(\theta)| \geq \lambda - \lambda^{-1}$. Set
\begin{align*}
\sigma^+_n = \sigma^+(I_n, E) = \max \limits_{\theta \in I_n + \omega} \sigma^+(\theta, E) \\
\sigma^-_n = \sigma^-(I_n, E) = \max \limits_{\theta \in I_n + \omega} \sigma^-(\theta, E),
\end{align*}
If, for any $0 < k \leq n$, we have
\begin{align}
\frac1{30} N_{k-1} \leq \max \{ \sigma^+_n, \sigma^-_n \} < \frac1{30} N_{k}, \label{ReturnTimeBoundsToScaleChange}
\end{align}
then we set
\begin{align}
I(E) = I_{k} + \omega.
\end{align}

\begin{remark}
The $k$ above goes to infinity as $n$ goes to infinity, that is as $E \nearrow E_0$.
\end{remark}

By $\CondUH_n$ and $\CondSecond_n$, it follows that $\attrPlain_E(\theta_{\pm M_n}) \in B^u$, and $\repPlain_E(\theta_{\pm M_n}) \in B^s$, which immediately implies that $\sigma^\pm \leq M_n \ll \frac1{30}N_n$. That is, for the parameters $E \in \UE_{n-1}$, \cref{ReturnTimeBoundsToScaleChange} is satisfied for some $0 < k \leq n$.

Since the return time from $I_k$ to itself is at least $N_k$, this immediately gives us
\begin{equation*}
\Big( \bigcup \limits_{m = 1}^{15\sigma^+_n} I + m\omega \Big) \bigcap \Big( \bigcup \limits_{m = 1}^{15\sigma^-_n} I - m\omega \Big) = \emptyset,
\end{equation*}
which is the assumption in \cref{AssSTReturnBounds}.

Now, suppose that $k$ is such that $I = I(E) = I_k + \omega$, and let $\theta_0 \in I + \omega$. Set $\sigma^\pm = \sigma^\pm(\theta_0, E)$, $r_i = \attr{i}$ and $s_i = \rep{i}$. Since $\CondUH_n$ is satisfied, \cref{UHForward} implies that
\begin{equation*}
    r_j \not\in B^u \implies \theta_j \in \Xi^u_{n} = \bigcup \limits_{i=0}^{n} \bigcup \limits_{m=1}^{M_i} (I_i + m\omega).
\end{equation*}
However, we might be dealing with the situation where $k < n$, in which case
\[
\bigcup \limits_{i=k+1}^{n} \bigcup \limits_{m=1}^{M_n} (I_n + m\omega)
\]
could be replaced by something even better, since in that case $\sigma^+_n < N_{k} \ll M_{k+1}$. That is, $r_j$ might stabilise in $B^u$ much earlier than predicted by $\CondUH_n$. If we set
\begin{center}
$\Sigma^u_i = \bigcup \limits_{m=1}^{M_i} (I_i + m\omega)$ for $0 \leq i < k$, and $\Sigma^u_k = \bigcup \limits_{m = 1}^{\sigma^+_n + 10M_{k-1}} I_k + m\omega$,
\end{center}
and
\begin{center}
$\Sigma^s_i = \bigcup \limits_{m=0}^{M_i} (I_i - m\omega)$ for $0 \leq i < k$, and $\Sigma^s_k = \bigcup \limits_{m = 0}^{\sigma^-_n + 10M_{k-1}} I_k - m\omega$,
\end{center}
then we have the following result.

\begin{lemma}\label{lem: AdaptedExceptionalSet}
Suppose that $I = I(E) = I_k + \omega$, then
\begin{equation}
r_j \not\in B^u \implies \theta_j \in \bigcup \limits_{i = 0}^k \Sigma^u_i,\label{AdaptedForwardExceptionalSet}
\end{equation}
and
\begin{equation}
s_j \not\in B^s \implies \theta_j \in \bigcup \limits_{i = 0}^k \Sigma^s_i,\label{AdaptedBackwardExceptionalSet}
\end{equation}
\end{lemma}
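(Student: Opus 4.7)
My plan is to bootstrap from the characterisation of $\Xi^u_n$ given by \cref{ExistenceOfInvariantGraphs} (combined with \cref{LemUESatisfied}), refining it using the choice of $k$ encoded in \cref{ReturnTimeBoundsToScaleChange}. Since $E \in \UE_{n-1}$ we have $\CondUH_n$, and hence $\attrPlain_E(\theta) \notin B^u$ forces $\theta \in \Xi^u_n$, i.e.\ $\theta = \theta_* + m\omega$ for some $\theta_* \in I_i$, $0 \leq i \leq n$, $1 \leq m \leq M_i$. The proof naturally splits at the scale $i = k$.

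When $i < k$, the slice $\bigcup_{m=1}^{M_i}(I_i + m\omega)$ is by definition exactly $\Sigma^u_i$, so $\theta \in \Sigma^u_i$ and we are done. The main case is $i \geq k$: because $I_n \subset \cdots \subset I_k$, the point $\theta_*$ may be viewed inside $I_k$, and the task reduces to showing $m \leq \sigma^+_n + 10 M_{k-1}$. I would argue this by contradiction: assume $m > \sigma^+_n + 10 M_{k-1}$ and deduce $\attrPlain_E(\theta) \in B^u$. The inequality $\sigma^+_n + 10 M_{k-1} \ll N_k$, which follows from \cref{ReturnTimeBoundsToScaleChange} together with $M_{k-1} \sim \sqrt{N_{k-1}}$, ensures that the segment $\theta_* + j\omega$, $1 \leq j \leq m$, does not re-enter $I_k$. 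Within the first $\sigma^+_n$ of these iterates the definition of the stopping time (combined with $\CondUH_n$ at scale $k$) produces a step at which $|\attrPlain_E - \repPlain_E| \geq \lambda^{-3}$; and thereafter, during the remaining (up to) $10 M_{k-1}$ iterates, which lie outside $I_k$, repeated application of \cref{NextInGoodIfNotInBad} together with $\CondFirst_{k-1}$ and $\CondSecond_{k-1}$ forces $\attrPlain_E$ into $B^u$ well before step $m$ is reached. The companion claim for $\repPlain_E$ and $B^s$ then follows by the symmetric argument run under $\Phi^{-1}$ with $\sigma^-_n$ in place of $\sigma^+_n$ and $\Sigma^s_i$ in place of $\Sigma^u_i$, the set $\Sigma^s_i$ being defined precisely so that the same proof transcribes.

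The main obstacle I anticipate lies in justifying the step "within $\sigma^+_n$ iterates the distance becomes $\geq \lambda^{-3}$" when $\theta_* \in I_i \setminus I_n$ for some $k \leq i < n$, because $\sigma^+_n$ was defined as the maximum of $\sigma^+(\cdot, E)$ over $I_n + \omega$ rather than over the larger set $I_k + \omega$. The resolution will be a scale-by-scale monotonicity argument: a point $\theta_*$ lying at a shallower scale of the potential well produces at least as much dynamical pressure separating $\attrPlain_E$ from $\repPlain_E$ as a point deeper in $I_n$, so its stopping time cannot exceed $\sigma^+_n$. Making this rigorous should use the combination of $\CondFirst_i$ and $\CondSecond_i$ for each $k \leq i \leq n$, all of which are packaged inside $\CondUH_n$, applied together with the distance-evolution formula \cref{DistanceNextStep} to compare the two trajectories term-by-term.
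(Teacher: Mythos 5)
Your route differs from the paper's. The paper fixes $\theta_0 \in I = I_k + \omega$, observes that the forward iterates of $I$ cover the circle so it suffices to control iterates up to the first return $N(\theta_0)$, invokes \cref{AfterSeparatingForwardsBackInGood} to land in $\Theta_{k-1} \times B^u$ within $\sigma^+(\theta_0) + 10M_{k-1}$ steps, and then applies $\CondFirst_k$ (supplied by $\CondUH_n$) to cover the remaining iterates up to $N(\theta_0)$. You instead start from the exceptional-set characterization $\Xi^u_n$ furnished by $\CondUH_n$ and try to trim the scales $i \geq k$ down to $\Sigma^u_k$. These end up doing the same dynamical work, but your framing produces two concrete difficulties that the paper's first-return reduction avoids.

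First, your assertion that the segment $\theta_* + j\omega$, $1 \leq j \leq m$, does not re-enter $I_k$ is false once $i > k$: there $m$ ranges up to $M_i$, and already $M_{k+1} \approx \lambda^{M_k/(4\tau)} \gg N_k \approx M_k^2$, so that orbit segment crosses $I_k$ many times before reaching $\theta$. To argue from $\theta_* + \omega$ you would have to re-base at the last crossing of $I_k$ before $\theta$ -- and that is exactly the first-return reduction with which the paper opens its proof. Second, the ``scale-by-scale monotonicity'' you propose to show $\sigma^+(\theta_* + \omega) \leq \sigma^+_n$ for $\theta_* \in I_k \setminus I_n$ is not a proof: the stopping time is a first-exit time from $\{\theta : (\attrPlain_E - \repPlain_E)(\theta) < \lambda^{-3}\}$, and \cref{DistanceNextStep} by itself does not give a term-by-term comparison of $d(\theta_* + j\omega)$ against $d(\theta_c + j\omega)$ — the two orbits are rigid translates of one another and $d$ is not monotone along either. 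You are right that this comparison is needed (the paper too passes from $\sigma^+(\theta_0)$ to $\sigma^+_n$ at the end of its proof without explicit justification), but invoking ``dynamical pressure'' does not close it. A smaller point: your contradiction must be run under the additional hypothesis $\theta \notin \Xi^u_{k-1}$, since $\CondFirst_k$ only forces $r \in B^u$ off $\Xi^u_{k-1}$; that case is harmless since $\Xi^u_{k-1} = \bigcup_{i<k}\Sigma^u_i$, but it should be stated.
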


\begin{proof}
Since the forward iterates of $I = I_k + \omega$ cover the circle, and we start with $\theta_0 \in I$, it suffices to show that it holds for every $0 \leq j \leq N(\theta_0)$, where $N(\theta_0)$ is the first return of $\theta_0$ to $I$. That is, every iterate of $\theta_0$ can be identified with the iterate $\widetilde{\theta}_j$, where $\widetilde{\theta}_0 \in I$, and $0 \leq j \leq N(\widetilde{\theta}_0)$, which implies the claim for arbitrary iterates $r_j$.

By \cref{AfterSeparatingForwardsBackInGood}, there is a $0 \leq j \leq 10M_{k-1}$ (depending on $\theta_0$), satisfying that
\[
(\theta_{\sigma^+(\theta_0) + j}, r_{\sigma^+(\theta_0) + j}) \in \Theta_{k-1} \times B^u.
\]
Condition $\CondUH_n$ implies $\CondFirst_k$, which further implies that
\begin{equation*}
r_j \not\in B^u \implies \theta_j \in \Xi^u_{k-1} = \bigcup \limits_{i=0}^{k-1} \bigcup \limits_{p=1}^{M_i} (I_i + p\omega),
\end{equation*}
for every $\sigma^+(\theta_0) + 10M_{k-1} \leq j \leq N$, where $N$ is the first return to $I_k$. The only iterates we haven't covered are $0 \leq j < \sigma^+(\theta_0) + 10M_{k-1}$, which are in $\Sigma^u_k$, and thus \cref{AdaptedForwardExceptionalSet} follows.

The other statement is proved in the exact same way, but iterating backwards.
\end{proof}

\subsection{Proving \cref{AssST}}

Suppose that $E \in \UE_{n-1}$, for some $n \geq 0$, and that $I(E) = I_k + \omega$ where $k \leq n$. By \cref{LemUESatisfied}, $\CondUH_n$ is satisfied. We begin with proving \cref{AssGrowthForwardFromInterval}. The proof of \cref{AssGrowthBackwardFromInterval} is completely analogous, but iterating the other direction.

Let $\theta_0 \in I$, and set $r_0 = \attrPlain_E(\theta_0), s_0 = \repPlain_E(\theta_0)$ and
\begin{center}
$\Sigma^s_i = \bigcup \limits_{m=0}^{M_i} (I_i - m\omega)$ for $0 \leq i < k$, and $\Sigma^s_k = \bigcup \limits_{m = 0}^{\sigma^-_n + 10M_{k-1}} I_k - m\omega$,
\end{center}
For every $0 \leq j < k$, set $a_j = N_j - M_j - 2$, $r_k = N_j - M_j - 1$, and $l_j = M_j + 1$. Set $a_k = N_k - (\sigma^-_n + 10M_{k-1} + 1) \geq \frac{28}{30}N_k$ (recall \cref{ReturnTimeBoundsToScaleChange} and that $M_{k-1} \ll N_k$), $r_k = N_k - (\sigma^-_n + 10M_{k-1} + 1) \geq \frac{28}{30}N_k$, and $l_k = \sigma^-_n + 10M_{k-1} + 1 \leq \frac2{30}N_k$. Then \cref{TimeSpentInIntervalSystem} applies to $\Sigma^s_j$, and $0 \leq j \leq k$, giving us
\begin{align}
\frac{| \{ 0 \leq i < t : \theta_i \in \bigcup \limits_{j = 0}^k \Sigma^s_j \} |}{t} \leq \sum \limits_{j = 0}^{k-1} \frac{M_j + 1}{N_j - 1} + \frac2{28} \leq \frac12,\label{AdaptedExceptionalSetFrequencyEstimate}
\end{align}
if $\lambda$ is sufficiently large. Since \cref{AdaptedBackwardExceptionalSet} is satisfied, \cref{CoupledForwardExpansionAfterCritical} implies that, for every $0 \leq i \leq \sigma^+$, we have
\begin{align}
D_{0, i}(r_0, s_0) \geq \lambda^{(i + 1)/2}.\label{ForwardExpansionUntilST}
\end{align}
Analogously, one can show that
\begin{align}
D_{-i, 0}(r_0, s_0) \leq \lambda^{-(i + 1)/2},\label{BackwardExpansionUntilST}
\end{align}
for every $0 \leq i \leq \sigma^-$.\newline

We now turn to \cref{AssGrowthForwardCloseToST,AssGrowthBackwardCloseToST}. Again, their proofs are nearly identical, and we will only write down the proof of the first one. Let $0 \leq \widehat{\sigma} \leq \sigma^+$ be the largest such that
\[
\theta_{\widehat{\sigma}} \not\in \bigcup \limits_{j = 0}^{k - 1} \bigcup \limits_{m = 1}^{\widehat{M}_j} \Sigma^s_j + m\omega,
\]
where $\widehat{M}_j = 20 \cdot 2^j M_j$. Since, the ratio between $M_j$ and $N_j$ grows super-exponentially fast (see \cref{SuperExponentiaGrowthM,SuperExponentialRatioMN}), it follows that $20 \cdot 2^j M_j \ll N_j$, for every $j > 0$, provided that $\lambda$ is sufficiently large. Since $N_j$ is a lower bound of the return time from $I_j$ to itself, there has to be such a $\widehat{\sigma}$. That is, if we set
\[
\Sigma_j = \bigcup \limits_{m = 1}^{\widehat{M}_j} \Sigma^s_j + m\omega,
\]
$r_j = N_j - (M_j + 1) - \widehat{M}_j$, and $l_j = M_k + 1 + \widehat{M}_k$, then \cref{TimeSpentInIntervalSystem} gives us that
\begin{align*}
\frac{| \{ 0 \leq i < t : \theta_i \in \bigcup \limits_{j = 0}^k \Sigma_j \} |}{t} \leq \sum \limits_{j = 0}^{k} \frac{M_j + 1 + \widehat{M}_j}{t} + \frac{M_j + 1 + \widehat{M}_j}{N_j} \leq \frac12,
\end{align*}
provided $t \geq 3\widehat{M}_j$, and $\lambda$ is large enough. This means that $\sigma^+ - \widehat{\sigma} \leq 3\widehat{M}_j = 60 \cdot 2^{k-1}M_{k-1}$. Since $\sigma^+_n \geq \frac1{30}N_{k-1}$, by \cref{ReturnTimeBoundsToScaleChange}, it follows that
\begin{align*}
\frac{\sigma^+ - \widehat{\sigma}}{\sigma^+_n} \leq \frac{60 \cdot 2^{k-1} M_{k-1}}{\frac1{30}N_{k-1}} \xrightarrow[k \to \infty]{} 0,
\end{align*}
or that $\sigma^+ - \widehat{\sigma} = \eta(E) \sigma^+_n$, where $\eta(E) \searrow 0$ as $E \to E_0$.

Since $\theta_{\widehat{\sigma}^+}$ starts far away (iterating backwards) from the sets $\Sigma^s_j$, we can get good estimates going backwards. That is, if we set $r_j = N_j - (M_j + 1), l_j = M_j + 1$, and $a_j = \widehat{M}_j = 20 \cdot 2^j M_j$, then \cref{TimeSpentInIntervalSystem} gives us for every $0 \leq \tau \leq \widehat{\sigma}$ that
\begin{align*}
\frac{| \{ j : \tau < j \leq \widehat{\sigma}, \theta_j \in \bigcup \limits_{i = 0}^{k-1} \Sigma^s_i \} |}{\widehat{\sigma} - \tau} \leq \sum \limits_{i = 0}^{m-1} \frac{M_i + 1}{20 \cdot 2^i M_i} \leq \frac1{2}.
\end{align*}
Again, using \cref{CoupledForwardExpansionAfterCritical}, we obtain for every $0 \leq \tau \leq \widehat{\sigma}$ the inequality
\begin{align*}
D_{\tau, \widehat{\sigma}} \geq \lambda^{(\widehat{\sigma} - \tau + 1)/2}.
\end{align*}
This concludes the proof of \cref{AssGrowthForwardCloseToST}.

In the same way, one can show that there is a $\widehat{\sigma}$ and $\eta(E) \searrow 0$ (as $E \to E_0$), such that for every $0 \leq \tau \leq \widehat{\sigma}$
\begin{align*}
D_{-\widehat{\sigma}, -\tau} \geq \lambda^{(\widehat{\sigma} - \tau + 1)/2},
\end{align*}
where again $\sigma^- - \widehat{\sigma} = \eta(E)\sigma^-_n$.

In order to prove \cref{AssForwardTail}, we note that
\begin{equation*}
s_j \not\in B^s \implies \theta_j \in \bigcup \limits_{i = 0}^k \Sigma^s_i,
\end{equation*}
by \cref{AdaptedBackwardExceptionalSet}. Let $\theta \in I$, $0 \leq \tau \leq \sigma^+$ and $t \geq \tau$. We wish to bound $D_{\tau, t}(\repPlain_E(\theta), \repPlain_E(\theta))$. Similarly to before, find the smallest $\tau \leq \widehat{\sigma}$ such that
\[
\theta_{\widehat{\sigma}} \not\in \bigcup \limits_{j = 0}^{k - 1} \bigcup \limits_{m = 1}^{\widehat{M}_j} \Sigma^s_j - m\omega.
\]
The rest of the proof is simply showing that $\widehat{\sigma} - \tau = \eta(E)\sigma^+_n$ for some $\eta(E)$ that goes to 0 as $E \nearrow E_0$, and that we have
\[
D_{\widehat{\sigma}, t}(\repPlain_E(\theta), \repPlain_E(\theta)) \geq \lambda^{(t - \widehat{\sigma} + 1)/2},
\]
whenever $\widehat{\sigma} \leq t$. The proof proceeds in a manner analogous to the proof above. One can prove \cref{AssBackwardTail} in a similar way.

\subsection{Proving \cref{AssInterval}}

The assumption \cref{eq: AssLinearMinDist}, and also \cref{eq: MainResultLinearDistance} in \cref{MainResult}, follow from \cref{prop: LinearMinDistance}. The assumption \cref{AssDerivativeBoundsCriticalInterval} follows from \cref{BoundedDerivativesAboveCriticalInterval}. The assumption \cref{AssAlmostQuadratic} requires a little bit more care, but follows from the interval $I$ having a global minimum, by \cref{lem: MinimumDistanceAttainedInCriticalInterval}, and the uniform bounds on the second derivative in \cref{SecondDerivTheta}. As for the length of the interval, the assumption \cref{eq: AssIntervalLength}, it is shown in \cref{lem: LengthOfInterval}.

We remark that, as has already been said, as $E$ gets closer to $E_0$, the $k$ that satisfies $I(E) = I_k + \omega$ goes to infinity. That is, for the asymptotic statements, we can simply choose $k$ as big as we want.

The first result here is about the derivative above the critical interval $I = I(E)$.

\begin{lemma}\label{lem: DerivativeBoundsCriticalInterval}
Suppose that $n \geq 1$ and $E \in \UE_{n-1}$. Then there is a positive constant $C$, independent of $n$ and $E$, such that
\begin{align}
\frac1C \lambda^2 \leq \deriv^2 (\attrPlain_E - \repPlain_E)|_I \leq C\lambda^2,\label{SecondDerivTheta} \\
-1 - \frac4{\lambda^2} \leq \derivE (\attrPlain_E - \repPlain_E)|_I \leq -1 + \frac4{\lambda^2},\label{DerivE} \\
-\frac{32}{\lambda^2} \leq \derivE^2 (\attrPlain_E - \repPlain_E)|_I \leq \frac{32}{\lambda^2}\label{SecondDerivE},
\end{align}
where $I = I(E)$, provided $\lambda$ is sufficiently large. Moreover, there is a positive constant $C_1$, independent of $n$ and $E$, such that
\begin{equation}
\begin{gathered}
|\deriv \attrPlain_E(\theta| \leq C_1, \text{ and}\\
|\deriv \repPlain_E(\theta)| \leq C_1,\label{BoundedDerivativesAboveCriticalInterval}
\end{gathered}
\end{equation}
for every $\theta \in I$.
\end{lemma}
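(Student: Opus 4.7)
The plan is to extract all four bounds from iterated versions of the invariance relations
\begin{align*}
\attrPlain(\theta) = \lambda^2 v(\theta - \omega) - E - 1/\attrPlain(\theta - \omega), \qquad \repPlain(\theta+\omega) = \lambda^2 v(\theta) - E - 1/\repPlain(\theta),
\end{align*}
differentiated once or twice in $\theta$ or $E$. For $\attrPlain$ one iterates backward, so each step contributes a factor $\attrPlain(\theta-j\omega)^{-2} \leq \lambda^{-2}$ whenever $\attrPlain(\theta-j\omega) \in B^u$; for $\repPlain$ one iterates forward, so each step contributes $\repPlain(\theta+j\omega)^{2} \leq \lambda^{-2}$ whenever $\repPlain(\theta+j\omega) \in B^s$. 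Condition $\CondUH_n$ from \cref{LemUESatisfied}, combined with \cref{lem: AdaptedExceptionalSet} and the Diophantine return bounds, guarantees that for $\theta \in I = I_k+\omega$ the orbit lies in the expected good region except on a set of density $o(1)$ as $\lambda \to \infty$; on those few exceptional steps the trivial inclusion $\attrPlain,\repPlain \in B = [\lambda^{-2},\lambda^2]$ suffices to keep the resulting series convergent uniformly in $E$ and $n$.

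Applying this scheme to $\partial_E$ yields
\begin{align*}
\partial_E \attrPlain(\theta) = -\sum_{j=0}^{\infty} \prod_{i=1}^{j} \attrPlain(\theta-i\omega)^{-2}, \qquad \partial_E \repPlain(\theta) = \sum_{j=0}^{\infty} \prod_{i=0}^{j} \repPlain(\theta+i\omega)^{2}.
\end{align*}
The $j=0$ term of the first series equals $-1$ and its tail is dominated by a geometric series of ratio $\lambda^{-2}$; the second series is uniformly $O(\lambda^{-2})$. Subtracting proves \cref{DerivE}. A further $E$-differentiation introduces no $\lambda^2$ factor (the only explicit $E$-dependence in the recursion is the constant $-1$), so every term of the resulting series is $O(\lambda^{-2})$, giving \cref{SecondDerivE}. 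The same scheme applied to $\deriv$ produces the series $\deriv \attrPlain(\theta) = \sum_{j} \lambda^2 v'(\theta-(j+1)\omega) \prod_i \attrPlain(\theta-i\omega)^{-2}$, whose modulus is bounded by a constant depending only on $\|v'\|_\infty$ and $\omega$ since $|v'|$ is bounded and the product decays geometrically past the first (possibly exceptional) step; together with its $\repPlain$-analogue this proves \cref{BoundedDerivativesAboveCriticalInterval}.

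The estimate \cref{SecondDerivTheta} is the delicate one. Differentiating the backward recursion twice yields, for both $\psi = \attrPlain$ and $\psi = \repPlain$,
\begin{align*}
\deriv^2 \psi(\theta) = \lambda^2 v''(\theta-\omega) + \frac{\deriv^2 \psi(\theta-\omega)}{\psi(\theta-\omega)^2} - 2\frac{(\deriv \psi(\theta-\omega))^2}{\psi(\theta-\omega)^3},
\end{align*}
so the leading $\lambda^2 v''(\theta-\omega)$ cancels exactly in $\deriv^2(\attrPlain-\repPlain)(\theta)$. The surviving $\lambda^2$ contribution must be pulled out of the next backward step: expanding $\deriv^2 \psi(\theta-\omega)$ using the same formula at $\theta-2\omega$ (where by the Diophantine condition and $\CondUH_n$ the iterates are already in the good region), one finds the dominant piece $\lambda^2 v''(\theta-2\omega)\bigl[\attrPlain(\theta-\omega)^{-2} - \repPlain(\theta-\omega)^{-2}\bigr]$ plus bounded remainders. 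The weights $\attrPlain(\theta-\omega)^{-2}$ and $\repPlain(\theta-\omega)^{-2}$ are kept uniformly apart from each other on $I_k$ by the separation structure of the initial boxes $B^u_k, B^s_k$ (see \cref{InitialBoxes}) underlying \cref{LemUESatisfied}, and $v''(\theta-2\omega) \geq v''(0)/2 > 0$ for $\lambda$ large enough that $I_k$ stays inside a fixed neighbourhood of $0$ where $v''$ has constant sign. Together these give the two-sided bound \cref{SecondDerivTheta}. The main obstacle is exactly this non-cancellation step: the naive first-order subtraction loses the $\lambda^2$, forcing one to dig one level deeper and exclude a further cancellation between the two weights, which is where the fine structure of the inductive construction, beyond its hyperbolic growth estimates, is genuinely invoked.
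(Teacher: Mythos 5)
Your treatment of the $\partial_E$, $\partial_E^2$ and first $\partial_\theta$ bounds matches the paper's scheme: iterate the recursion backward for $\attrPlain$ (so each step weights by $\attrPlain^{-2}\le\lambda^{-2}$ in the good region), forward for $\repPlain$ (weighting by $\repPlain^2\le\lambda^{-2}$), and control the exceptional steps via $\CondUH_n$ and the return bounds. That is essentially \cref{AbstractForwardDerivativeBounds} and \cref{AbstractBackwardDerivativeBounds}, and is fine, modulo one imprecision in your $\partial_\theta$ estimate: the series begins with $\lambda^2 v'(\theta-\omega)$, which is $O(1)$ only because $\theta-\omega\in I_k$ lies in a tiny neighbourhood of the non-degenerate minimum of $v$, so $|v'|$ is in fact $o(\lambda^{-2})$ there — not merely because $\|v'\|_\infty$ is finite.

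The argument for \cref{SecondDerivTheta} is where the proposal breaks. You write the \emph{same} backward recursion
\begin{equation*}
\deriv^2 \psi(\theta) = \lambda^2 v''(\theta-\omega) + \frac{\deriv^2\psi(\theta-\omega)}{\psi(\theta-\omega)^2} - 2\frac{(\deriv\psi(\theta-\omega))^2}{\psi(\theta-\omega)^3}
\end{equation*}
for both $\psi=\attrPlain$ and $\psi=\repPlain$, observe that $\lambda^2 v''(\theta-\omega)$ cancels, and then try to recover the $\lambda^2$ from the next step as $\lambda^2 v''(\theta-2\omega)\bigl[\attrPlain(\theta-\omega)^{-2}-\repPlain(\theta-\omega)^{-2}\bigr]$, claiming the two weights are uniformly separated by the box structure. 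Both halves of this claim fail. First, over $I_k$ the two curves are at their closest: $\attrPlain(\theta-\omega)\in B^u$ and $\repPlain(\theta-\omega)=\attrPlain(\theta-\omega)-d(\theta-\omega)$ with $d(\theta-\omega)<\lambda^{-3}$, so both weights are $\approx\lambda^{-2}$ and their difference is $O(d(\theta-\omega)\,\lambda^{-3})$, i.e.\ far smaller than $1$, not a uniform constant. Second, and more fundamentally, the backward-in-$\theta$ expansion is the \emph{wrong} direction for $\repPlain$: the stable direction is only controllably iterated forward. Tracking $\repPlain(\theta-j\omega)$ backward, the orbit stays near $\attrPlain\sim\lambda$ only while $j\le\sigma^-(\theta)$; once $\repPlain$ re-enters $B^s$, the factors $\repPlain(\theta-j\omega)^{-2}\sim\lambda^{2}$ make the tail of your series blow up, so the remainder is not bounded and the expansion does not converge.

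The paper avoids the cancellation entirely by iterating $\attrPlain$ and $\repPlain$ in \emph{opposite} directions. For $\attrPlain$, the backward recursion with $\attrPlain(\theta-j\omega)\in B^u$ gives $\deriv^2\attrPlain(\theta)=\lambda^2 v''(\theta-\omega)+O(\lambda)$. For $\repPlain$, one uses the inverse map $\repPlain(\theta)=\bigl(\lambda^2 v(\theta)-E-\repPlain(\theta+\omega)\bigr)^{-1}$, iterated forward; differentiating produces $-\lambda^2\repPlain(\theta)^2 v''(\theta)$ plus a convergent series with weights $\repPlain(\theta+j\omega)^2\le\lambda^{-2}$, and since $\repPlain(\theta)\in B^s$ for $\theta\in I$, the $v''$-term is already $O(1)$, giving $\deriv^2\repPlain(\theta)=O(\lambda)$ with no $\lambda^2 v''$ to cancel. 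Subtracting leaves $\lambda^2 v''(\theta-\omega)+O(\lambda)$, which is comparable to $\lambda^2$ on $I$ because $\theta-\omega$ lies in a shrinking neighbourhood of the non-degenerate minimum where $v''>0$. The $\lambda^2$ thus comes out at the \emph{first} level from a single curve, not from a second-level difference of weights, and the ``digging deeper'' step your proof relies on is a dead end.
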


\begin{proof}
Suppose that $k$ is such that $I = I(E) = I_k + \omega$, and let $\theta_0 \in I + \omega$. Set $\sigma^+ = \sigma^+(\theta_0)$ and $r_i = \attr{i}$. By \cref{lem: AdaptedExceptionalSet}, we have
\begin{equation*}
r_j \not\in B^u \implies \theta_j \in \bigcup \limits_{i = 0}^k \Sigma^u_j,
\end{equation*}
where
\begin{center}
$\Sigma^u_j = \bigcup \limits_{m=1}^{M_j} (I_j + m\omega)$ for $0 \leq j < k$, and $\Sigma^u_k = \bigcup \limits_{m = 1}^{\sigma^+_n + 10M_{k-1}} I_k + m\omega$.
\end{center}
Since we will iterate backwards from $\theta_{-1}$, we set $a_j = N_j - M_j, r_j = N_j - M_j$ and $l_j = M_j$, when $0 \leq j < k$, and $a_k = N_k - (\sigma^+_n + M_{k-1} + 1) \geq \frac{28}{30}N_k, r_k = N_k - (\sigma^+_n + M_{k-1}) \geq \frac{28}{30}N_k$ and $l_k = \sigma^+_n + M_{k-1} \leq \frac2{30} N_k$. Therefore \cref{TimeSpentInIntervalSystem} gives us the estimates
\begin{align*}
\frac{| \{ 1 \leq i \leq t : \theta_{-i} \in \bigcup \limits_{j = 0}^k \Sigma^u_j \} |}{t} \leq \sum \limits_{j = 0}^{k - 1} \frac{M_j}{N_j - 1} + \frac2{28} \leq \frac12,
\end{align*}
for every $t > 0$, provided $\lambda$ is large enough. Then \cref{ForwardContractionGivenTimeSpentInBad} gives us the estimate
\begin{align*}
r_{-t}^{\alpha_t} \cdots r_{-1}^{\alpha_{1}} \geq \lambda^{\frac12 t}.
\end{align*}
Moreover, \cref{lem: AdaptedExceptionalSet} implies that $r_0 \in B^u = [\lambda^{-2}, \lambda^2]$, since $I_k \cap \bigcup \limits_{i=0}^{k} \Sigma^u_i = \emptyset$. Therefore
\begin{align*}
r_{-t}^{\alpha_t} \cdots r_{-1}^{\alpha_{1}}r_0^{\alpha_0} \geq \lambda^{\frac12t + \alpha_0}.
\end{align*}
Since $\attrPlain_E$ is $C^2$ in both $\theta$ and $E$, there is a constant $C_E > 0$ depending only on $E$ such that $\|\attrPlain_E\|_{C^2(\theta, E)} \leq C_E$. If we choose $t > 0$ large enough (depending only on $E$), we can ensure that
\begin{align*}
|\frac{\deriv^i r_{-t}}{r_{-t}^2 \cdots r_{0}^2}| \text{ and } |\frac{\derivE^i r_{-t}}{r_{-t}^2 \cdots r_{0}^2}|
\end{align*}
are as small as we wish, for $i = 1,2$. Therefore, \cref{AbstractForwardDerivativeBounds} applies with $c_1 = c_2 = 1$. We thus obtain the inequalities
\begin{align*}
|\derivE \attr{1} + 1| &\leq \frac2{\lambda^2} \\
|\derivE^2 \attr{1}| &\leq \frac{16}{\lambda^{2}} \\
|\deriv \attr{1} - \lambda^2v'(\theta_N)| &\leq 2\|v\|_{C^1} \\
|\deriv^2 \attr{1} - \lambda^2v''(\theta_N)| &\leq 16\lambda \|v\|_{C^1}^2 + 8\lambda\|v\|_{C^1}^2 + 2\|v\|_{C^2} \leq 50\lambda \|v\|_{C^2},
\end{align*}
provided that $\lambda$ is sufficiently large. Iterating backwards, the same can be done for $\repPlain$, letting $\theta_0 \in I$. Upon applying \cref{AbstractBackwardDerivativeBounds}, for the same $c_1 = c_2 = 1$, we obtain the inequalities
\begin{align*}
|\derivE \rep{0}| &\leq \frac2{\lambda^2} \\
|\derivE^2 \rep{0}| &\leq \frac{16}{\lambda^{2}} \\
|\deriv \rep{0}| &\leq 2\|v\|_{C^1} \\
|\deriv^2 \rep{0}| &\leq \frac{16\lambda^2 \|v\|_{C^1}^2}{\lambda^{c_2}} + 8\lambda\|v\|_{C^1}^2 + 2\|v\|_{C^2} \leq 50\lambda \|v\|_{C^2}.
\end{align*}
The inequalities \cref{DerivE,SecondDerivE} follow immediately. For any $\theta \in I$, we also have
\begin{align*}
|\deriv^2 \attrPlain(\theta) - \deriv^2 \repPlain(\theta) - \lambda^2v''(\theta)| \leq 100\lambda \|v\|_{C^2}.
\end{align*}
Since $v''(\theta) > 0$ in $I_0$, we see that the difference $\deriv^2 (\attrPlain_E - \repPlain_E) \geq const \cdot \lambda^2$ on $I$, provided that $\lambda$ is large enough.
\end{proof}

\begin{lemma}\label{lem: MinimumDistanceAttainedInCriticalInterval}
Suppose that $E \in \UE_{n-1}$. Then the minimum of
\begin{align*}
\min \limits_{\theta \in \T} |\attrPlain_E(\theta) - \repPlain_E(\theta)|
\end{align*}
is globally unique, and attained in $I = I(E)$. That is, there is a unique $\theta_c = \theta_c(E) \in I$ such that
\begin{align*}
|\attrPlain_E(\theta_c) - \repPlain_E(\theta_c)| < \min \limits_{\theta \in I \backslash \{\theta_c\}} |\attrPlain_E(\theta) - \repPlain_E(\theta)| < \min \limits_{\theta \in \T \backslash I} |\attrPlain_E(\theta) - \repPlain_E(\theta)|.
\end{align*}
\end{lemma}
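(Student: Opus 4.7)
The plan is to combine strict convexity of $\attrPlain_E - \repPlain_E$ on $I = I(E)$ with a quantitative gap outside $I$. First, Lemma~\ref{lem: DerivativeBoundsCriticalInterval}, specifically \cref{SecondDerivTheta}, yields $\deriv^2(\attrPlain_E - \repPlain_E) \geq \frac{1}{C}\lambda^2 > 0$ on $I$, so the restriction $(\attrPlain_E - \repPlain_E)|_I$ is strictly convex and admits a unique minimizer $\theta_c = \theta_c(E) \in I$. Since Proposition~\ref{prop: LinearMinDistance} gives $\delta(E) = \min_{\T}(\attrPlain_E - \repPlain_E) = \min_I(\attrPlain_E - \repPlain_E)$, we conclude that $(\attrPlain_E - \repPlain_E)(\theta_c) = \delta(E)$ and $(\attrPlain_E - \repPlain_E)(\theta) > \delta(E)$ for every $\theta \in I \setminus \{\theta_c\}$.

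It remains to establish $(\attrPlain_E - \repPlain_E)(\theta) > \delta(E)$, with a uniform gap, for every $\theta \in \T \setminus I$. By Lemma~\ref{lem: AdaptedExceptionalSet}, whenever $\theta \notin \bigcup_{i=0}^k (\Sigma^u_i \cup \Sigma^s_i)$ one has $\attrPlain_E(\theta) \in B^u$ and $\repPlain_E(\theta) \in B^s$, hence $(\attrPlain_E - \repPlain_E)(\theta) \geq \lambda - \lambda^{-1}$, which vastly exceeds $\delta(E) = O(E_0 - E)$. For $\theta \in \bigcup_{i=0}^k (\Sigma^u_i \cup \Sigma^s_i) \setminus I$, since forward iterates of $I$ cover $\T$, I write $\theta = \theta_0 + m\omega$ with $\theta_0 \in I$ and $1 \leq m < N^+(\theta_0)$, and set $r_0 = \attrPlain_E(\theta_0)$, $s_0 = \repPlain_E(\theta_0)$. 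When $1 \leq m \leq \sigma^+(\theta_0) + 1$, \cref{AssGrowthForwardFromInterval} gives $D_{0, m-1}(r_0, s_0) \geq \lambda^{m/2}$, so
\begin{equation*}
(\attrPlain_E - \repPlain_E)(\theta) = D_{0, m-1}(r_0, s_0) \cdot (r_0 - s_0) \geq \lambda^{m/2} \delta(E) \geq \lambda^{1/2} \delta(E).
\end{equation*}
When $m > \sigma^+(\theta_0) + 1$, the stopping-time definition forces $(\attrPlain_E - \repPlain_E)(\theta_0 + (\sigma^+ + 1)\omega) \geq \lambda^{-3}$, and running the symmetric backward argument from the next return $\theta_0 + N^+(\theta_0)\omega \in I$ and its associated $\sigma^-$ propagates a lower bound of the same order to $\theta$, since \cref{AssSTReturnBounds} ensures that the forward $\sigma^+$-window around $\theta_0$ and the backward $\sigma^-$-window around $\theta_0 + N^+(\theta_0)\omega$ are disjoint between consecutive visits to $I$.

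The main obstacle is this last regime $m > \sigma^+(\theta_0) + 1$: past the forward stopping time, no direct expansion estimate rules out the two curves from reapproaching before the next return. This is controlled by the choice $I = I_k + \omega$ in \cref{ReturnTimeBoundsToScaleChange}, which fixes $k$ so that the return time from $I$ to itself dominates $\sigma^+_E + \sigma^-_E$; consequently, any $\theta$ strictly between two consecutive visits to $I$ either lies in a forward $\sigma^+$-window (handled above), a backward $\sigma^-$-window (handled symmetrically), or in the complement $\T \setminus \bigcup_{i=0}^k (\Sigma^u_i \cup \Sigma^s_i)$ where the $\lambda - \lambda^{-1}$ bound applies.
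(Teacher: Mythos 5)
The outer structure is right, but there are two problems, one of which is fatal.

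First, you invoke Proposition~\ref{prop: LinearMinDistance} to obtain $\delta(E)=\min_\T(\attrPlain_E-\repPlain_E)=\min_I(\attrPlain_E-\repPlain_E)$; but in the paper that proposition's proof opens with ``By \cref{lem: MinimumDistanceAttainedInCriticalInterval}, it is sufficient to consider only $\theta\in I$,'' so it sits logically downstream of the lemma you are trying to prove. You could repair this cosmetically by simply \emph{defining} $\delta(E)=\min_I d$ and never mentioning the proposition, since strict convexity on $I$ from \cref{SecondDerivTheta} already gives the unique $\theta_c\in I$; so this flaw is curable.

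The serious gap is in the regime $m>\sigma^+(\theta_0)+1$. You assert a trichotomy: every $\theta$ strictly between two consecutive visits to $I$ lies either in the forward $\sigma^+$-window, in the backward $\sigma^-$-window of the next visit, or in $\T\setminus\bigcup_{i=0}^k(\Sigma^u_i\cup\Sigma^s_i)$. This is false. Between the two windows the orbit will typically re-enter the smaller-scale exceptional sets $\Sigma^u_i,\Sigma^s_i$ for $i<k$ many times (returns to $I_0,\dots,I_{k-1}$), and over those stretches neither curve is pinned to $B^u$ or $B^s$, so the $\lambda-\lambda^{-1}$ bound does not apply; nor do \cref{AssGrowthForwardFromInterval} or its backward analogue. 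Your argument provides no lower bound on $d$ there, and one cannot get a uniform one: $d$ really does dip far below $\lambda-\lambda^{-1}$ over those excursions. What saves the paper is a quantitative comparison you never establish, namely the smallness of $\min_I d$. Using \cref{ReturnTimeBoundsToScaleChange} together with \cref{ForwardExpansionUntilST,BackwardExpansionUntilST}, the paper shows $\min_I d\le\lambda^{2-\frac1{30}N_{k-1}}$ (\cref{eq: MinimumDistanceLowerBound}). Then for the intermediate $\theta$'s it only needs crude bounds: via \cref{AfterSeparatingForwardsBackInGood,AfterSeparatingBackwardsBackInGood} and trivial distortion one gets $d\ge\lambda^{-40M_{k-1}-3}$ in the $10M_{k-1}$ iterates just past $\sigma^+$ (\cref{eq: DistanceOutsideCriticalRegion}), and via \cref{TimeSpentInIntervalSystem} one finds iterates in $\Theta_{k-1}$ spaced at most $2M_{k-1}$ apart with $d\ge\frac12\lambda$, so trivial bounds give $d\ge\frac12\lambda^{1-8M_{k-1}}$ everywhere in between. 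Since $M_{k-1}\ll N_{k-1}$, both of these crude lower bounds dwarf $\lambda^{2-\frac1{30}N_{k-1}}$, and the strict inequality follows. Without the upper bound $\min_I d\le\lambda^{2-N_{k-1}/30}$ and the $\Theta_{k-1}$-spacing argument, the case $m>\sigma^+(\theta_0)+1$ is simply open in your proposal.
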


\begin{proof}
Since $E \in \UE_{n-1}$, \cref{LemUESatisfied} implies that $\CondUH_n$ is satisfied. Suppose that $0 \leq k \neq n$ is such that $I(E) = I_k + \omega$, and let $\theta_0 \in I$. Set $\sigma^\pm = \sigma^\pm(\theta_0, E), r_i = \attrPlain_E(\theta_i)$ and $s_i = \repPlain_E(\theta_i)$. Then \cref{ForwardExpansionUntilST} gives us for every $0 \leq i \leq \sigma^+$, that
\[
D_{0, i - 1} \geq \lambda^{i/2}
\]
and \cref{BackwardExpansionUntilST} gives us for every $0 \leq i \leq \sigma^-$, that
\[
D_{-i + 1, 0} \geq \lambda^{i/2}.
\]
The bounds in \cref{ReturnTimeBoundsToScaleChange} imply that either
\begin{align*}
\max \limits_{\theta \in I(E)} D_{0, \sigma^+ - 1}(\attrPlain_E(\theta), \repPlain_E(\theta)) \geq \lambda^{\frac1{30}N_{k-1}},
\end{align*}
or
\begin{align*}
\min \limits_{\theta \in I(E)} D_{-(\sigma^- - 1), 0}(\attrPlain_E(\theta), \repPlain_E(\theta)) \leq \lambda^{-\frac1{30}N_{k-1}}.
\end{align*}
Since, $r_i - s_i = D_{0, i - 1}(r_0 - s_0)$ and $r_{-i} - s_{-i} = D_{-i + 1, 0}(r_0 - s_0)$, for $i \geq 1$, this shows that
\[
r_0 - s_0 < \min \limits_{-\sigma^- \leq i \leq \sigma^+, i \neq 0} r_i - s_i.
\]
Moreover, since $\T \times [\lambda^{-2}, \lambda^2]$ is invariant, it shows that
\begin{equation}
\min \limits_{\theta \in I} \attrPlain_E(\theta) - \repPlain_E(\theta) \leq \lambda^{2 - \frac1{30}N_{k-1}}.\label{eq: MinimumDistanceLowerBound}
\end{equation}
Using \cref{AfterSeparatingForwardsBackInGood,AfterSeparatingBackwardsBackInGood}, we get $0 \leq j^\pm \leq 10M_{k-1}$, such that $\theta_{\pm (\sigma^\pm + j^\pm)} \in \Theta_{k-1}, r_{\sigma^+ + j^+} \in B^u$ and $s_{-\sigma^- - j^-} \in B^s$.
Using the trivial bound $D_{\sigma^+, \sigma^+ + 10M_{k-1} - 1} \geq \lambda^{-40 M_{k-1}}$, this means that
\begin{equation}
r_i - s_i \geq (r_{\sigma^+} - s_{\sigma^+}) \cdot \lambda^{-40 M_{k-1}} \geq \lambda^{-40 M_{k-1} - 3},\label{eq: DistanceOutsideCriticalRegion}
\end{equation}
for $\sigma^+ < i \leq \sigma^+ + 10M_{k-1}$, and similarly for $-(\sigma^- + 10M_{k-1}) \leq i < -\sigma^-$. This is much larger than the lower bound in \cref{eq: MinimumDistanceLowerBound}. This proves the statement for $\theta \in I + m\omega$, and every $-\sigma^- - 10M_{k-1} \leq m \leq \sigma^+ + 10M_{k-1}$. For the remaining $\theta$, we use $\CondFirst_k$ (which is guaranteed by $\CondUH_n$), to get
\begin{equation*}
r_i \not\in B^u \implies \theta_i \in \Xi^u_{k-1} = \bigcup \limits_{j=0}^{k-1} \bigcup \limits_{m=1}^{M_j} (I_j + m\omega),
\end{equation*}
for every $\sigma^+ + j^+ \leq i \leq N^+$, where $N^+$ is the first return to $I$ iterating forward, and
\begin{equation*}
s_{-i} \not\in B^s \implies \theta_{-i} \in \Xi^s_{k-1} = \bigcup \limits_{j=0}^{k-1} \bigcup \limits_{m=0}^{M_j} (I_j - m\omega),
\end{equation*}
for every $\sigma^- + j^- \leq  \leq N^-$, where $N^-$ is the first return to $I$ iterating backward. Setting $r_j = N_j - M_j - 1$ and $l_k = M_j + 1$ for every $0 \leq j \leq k-1$, both sets $\Xi^u_{k-1}$ and $\Xi^s_{k-1}$ satisfy the conditions in \cref{TimeSpentInIntervalSystem}, giving us
\begin{equation*}
\frac{| \{ 0 \leq i < t : \theta_i \in \bigcup \limits_{j = 0}^{k-1} \Sigma_j \} |}{t} \leq \sum \limits_{j = 0}^{k-1} \frac{M_j + 1}{t} + \frac{M_j + 1}{N_j},
\end{equation*}
where $\Sigma_j$ is either $\bigcup \limits_{m=1}^{M_j} (I_j + m\omega)$ or $\bigcup \limits_{m=0}^{M_j} (I_j - m\omega)$. If we choose $t = 2M_{k-1}$, and $\lambda$ is sufficiently large, then
\begin{equation*}
\frac{| \{ 0 \leq i < t : \theta_i \in \bigcup \limits_{j = 0}^{k-1} \Sigma_j \} |}{t} < 1,
\end{equation*}
showing that there are $\theta_i$'s spaced at most $2M_{k-1}$ steps apart, satisfying $\theta_i \not\in \big( \Xi^u_{k-1}\cup \Xi^s_{k-1} \big)$, and therefore that $r_i \in B^u, s_i \in B^s$, meaning that
\[
r_i - s_i \geq \lambda - \lambda^{-1} \geq \frac12 \lambda.
\]
Again, using the trivial bounds on the distance increase, we see that difference has to be $\geq \frac12 \lambda^{1 - 8M_{k-1}}$, between such $\theta_i$'s. This proves that the minimum is attained in
\begin{equation*}
\bigcup \limits_{m = -\sigma^- - 10M_{k-1}}^{\sigma^+ + 10M_{k-1}} I + m\omega,
\end{equation*}
which together with the bounds in \cref{eq: MinimumDistanceLowerBound} shows that the difference is minimised in $I$. This minimum has to be unique, because of the non-degeneracy condition provided by the bounds in \cref{SecondDerivTheta}.
\end{proof}

For an upcoming paper, we need a result that is hidden in the proof of the above lemma. Specifically, we have the following result.

\begin{lemma}
For $E < E_0$ sufficiently close to $E_0$, we have
\begin{equation}
d(\theta) \gg \sqrt{d(\theta_c)}
\end{equation}
for every $\theta \not\in \{ \theta \in I + m\omega : -\sigma^-(\theta, E) \leq m \leq \sigma^+(\theta, E) \}$.
\end{lemma}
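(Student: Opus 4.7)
The strategy is to recycle the bounds already produced inside the proof of \cref{lem: MinimumDistanceAttainedInCriticalInterval}: explicit lower bounds for $d(\theta)$ outside the critical region were established there, and we only need to compare them with $\sqrt{d(\theta_c)}$. Fix $E \in \UE_{n-1}$ with $I(E) = I_k + \omega$, and recall from \cref{ReturnTimeBoundsToScaleChange} that $\max\{\sigma^+_n, \sigma^-_n\} \geq \tfrac{1}{30} N_{k-1}$, and that $k \to \infty$ as $E \nearrow E_0$.

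The first step is to bound $\sqrt{d(\theta_c)}$ from above. Without loss of generality assume $\sigma^+_n \geq \tfrac{1}{30} N_{k-1}$; then there is some $\theta_0 \in I$ with $\sigma^+(\theta_0) \geq \tfrac{1}{30} N_{k-1}$. Applying \cref{AssGrowthForwardFromInterval} produces $D_{0, \sigma^+ - 1}(r_0, s_0) \geq \lambda^{N_{k-1}/60}$, and combining with the invariance bound $r_{\sigma^+} - s_{\sigma^+} \leq \lambda^2$ yields
\[
d(\theta_c) \leq \lambda^{2 - N_{k-1}/60}, \qquad \text{hence} \qquad \sqrt{d(\theta_c)} \leq \lambda^{1 - N_{k-1}/120}.
\]

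The second step is to bound $d(\theta)$ from below for $\theta$ outside the critical region. Writing $\theta = \theta_0 + m\omega$ with $\theta_0 \in I$ and (by symmetry) $m > \sigma^+(\theta_0)$, the definition of $\sigma^+$ forces $d(\theta_0 + (\sigma^+ + 1)\omega) \geq \lambda^{-3}$. For $\sigma^+ + 1 \leq m \leq \sigma^+ + 10M_{k-1}$ the trivial step-wise bound $d(\theta_{i+1}) \geq \lambda^{-4} d(\theta_i)$ (which follows from $r_i, s_i \in B$ via \cref{DistanceNextStep}) propagates this lower bound to
\[
d(\theta) \geq \lambda^{-3 - 40 M_{k-1}}.
\]
For $m > \sigma^+ + 10M_{k-1}$ the argument in the last paragraph of the proof of \cref{lem: MinimumDistanceAttainedInCriticalInterval} applies verbatim: \cref{AfterSeparatingForwardsBackInGood} supplies $0 \leq j^+ \leq 10M_{k-1}$ with $(\theta_{\sigma^+ + j^+}, r_{\sigma^+ + j^+}) \in \Theta_{k-1} \times B^u$, and \cref{TimeSpentInIntervalSystem} with $t = 2M_{k-1}$ (using $\CondFirst_k$, itself guaranteed by $\CondUH_n$) forces a ``good'' iterate $\theta_i$ with $d(\theta_i) \geq \lambda - \lambda^{-1}$ in every window of length $2M_{k-1}$; inserting the trivial bound between consecutive good iterates produces $d(\theta) \geq \tfrac{1}{2}\lambda^{1 - 8M_{k-1}}$.

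Combining the two steps gives, in either subcase,
\[
\frac{d(\theta)}{\sqrt{d(\theta_c)}} \geq \lambda^{N_{k-1}/120 - c\, M_{k-1} - O(1)}
\]
for some absolute constant $c > 0$. Since $M_{k-1} \sim \sqrt{N_{k-1}}$ by \cref{SuperExponentialRatioMN}, the exponent tends to $+\infty$ as $k \to \infty$, which is exactly $d(\theta) \gg \sqrt{d(\theta_c)}$, uniformly in $\theta$ outside the critical region. The main obstacle is purely bookkeeping of constants: no new dynamical ingredient is required, only the observation that the lower bound for $d(\theta)$ degrades at most linearly in $M_{k-1}$, while the upper bound for $\sqrt{d(\theta_c)}$ improves linearly in $N_{k-1}$, and $N_{k-1}$ dominates $M_{k-1}$ as $k \to \infty$.
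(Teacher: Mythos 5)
Your proposal is correct and follows essentially the same route as the paper: both rely on the upper bound for $d(\theta_c)$ of order $\lambda^{-cN_{k-1}}$ coming from the stopping-time growth estimate, the two lower bounds for $d(\theta)$ of order $\lambda^{-cM_{k-1}}$ established in the proof of \cref{lem: MinimumDistanceAttainedInCriticalInterval}, and the observation that $M_{k-1} \ll N_{k-1}$ (with $M_{k-1} \sim \sqrt{N_{k-1}}$) to conclude. The paper's version simply cites those intermediate bounds by reference rather than re-deriving them.
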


\begin{proof}
The bounds in \cref{eq: MinimumDistanceLowerBound} imply the bounds
\begin{equation*}
d(\theta_c) \leq \lambda^{2 - \frac1{30}N_{k-1}}.
\end{equation*}
In the rest of the above proof, we show that, outside of $\{ \theta \in I + m\omega : -\sigma^-(\theta, E) \leq m \leq \sigma^+(\theta, E) \}$, the difference is at least $\lambda^{-40 M_{k-1} - 3}$, using \cref{eq: DistanceOutsideCriticalRegion}, and the bound $\frac12 \lambda^{1 - 8M_{k-1}}$ given at the end of the proof. Since $M_{k-1} \ll N_{k-1}$, we see that $d(\theta) \gg \sqrt{d(\theta_c)}$ outside the set $\{ \theta \in I + m\omega : -\sigma^-(\theta, E) \leq m \leq \sigma^+(\theta, E) \}$.
\end{proof}

The next result shows that the minimum difference is asymptotically linear as $E \nearrow E_0$.

\begin{prop}\label{prop: LinearMinDistance}
Let $\theta_c = \theta_c(E)$ be the point where that minimises the difference between the two curves $\attrPlain_E$ and $\repPlain_E$. Then the difference at $\theta_c$ satisfies
\begin{align}
d(\theta_c) = \min_{\theta \in \T} |\attrPlain_E(\theta) - \repPlain_E(\theta)| = const \cdot (E_0 - E) + o(E_0 - E),
\end{align}
as $E \nearrow E_0$, where the constant satisfies $-1 - \frac4{\lambda^2} \leq const \leq -1 + \frac4{\lambda^2}$.
\end{prop}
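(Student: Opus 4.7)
The plan is to prove the proposition via an envelope-type argument, exploiting that $\theta_c(E)$ is a critical point of $\attrPlain_E - \repPlain_E$ in the $\theta$-variable. Define $F(\theta, E) = (\attrPlain_E - \repPlain_E)(\theta)$. By \cref{lem: MinimumDistanceAttainedInCriticalInterval}, $\theta_c(E) \in I(E)$ is the unique global minimiser of $F(\cdot, E)$, and combined with the uniform positive lower bound on $\partial_\theta^2 F$ from \eqref{SecondDerivTheta}, the implicit function theorem applied to the first-order condition $\partial_\theta F(\theta_c(E), E) = 0$ shows that $\theta_c(E)$ is $C^1$ in $E$ on each sub-interval $\UE_{n-1}$.

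Differentiating the composition $E \mapsto F(\theta_c(E), E)$ via the chain rule and invoking the vanishing of $\partial_\theta F(\theta_c(E), E)$, the term containing $\theta_c'(E)$ drops out, leaving the envelope identity
\begin{equation*}
\frac{d}{dE} d(\theta_c(E)) = \partial_E (\attrPlain_E - \repPlain_E)(\theta_c(E)).
\end{equation*}
Since $\theta_c(E) \in I(E)$, the bound \eqref{DerivE} from \cref{lem: DerivativeBoundsCriticalInterval} immediately yields the uniform estimate $-1 - 4/\lambda^2 \leq \frac{d}{dE} d(\theta_c(E)) \leq -1 + 4/\lambda^2$ for every $E \in [-1, E_0)$.

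The final step is to integrate this derivative bound up to $E_0$. For that I need $\lim_{E \nearrow E_0} d(\theta_c(E)) = 0$, which follows from the definition of $E_0$ as the lowest energy in the spectrum: if the limit were strictly positive, uniform continuity together with the $\mathcal{C}^2$-dependence from \cref{ExistenceOfInvariantGraphs} would allow one to extend $\attrPlain_E, \repPlain_E$ to disjoint continuous invariant directions at $E = E_0$, producing a continuous dominated splitting and thus contradicting that $E_0$ is the lower boundary of uniform hyperbolicity. Combining this boundary value with the uniform derivative bound,
\begin{equation*}
d(\theta_c(E)) = -\int_E^{E_0} \frac{d}{dE'} d(\theta_c(E'))\, dE' \in \bigl[(1 - 4\lambda^{-2})(E_0 - E),\ (1 + 4\lambda^{-2})(E_0 - E)\bigr],
\end{equation*}
which is exactly the claimed linear asymptotic (up to absorbing the sign convention into the statement). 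The $o(E_0 - E)$ refinement then comes from the continuity of $E \mapsto \partial_E F(\theta_c(E), E)$ and the extraction of a limit value at $E_0$.

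The main obstacle to watch out for is that $I(E)$, and hence a priori $\theta_c(E)$, may jump discretely as $E$ crosses the boundary between consecutive sub-intervals $\UE_{n-1}$, since the scale $k$ in the choice $I(E) = I_k + \omega$ can change. However, the envelope identity and the bound \eqref{DerivE} are both independent of $n$ and valid on every $I(E)$, so the integration argument is robust across such jumps as long as $d(\theta_c(E))$ itself is continuous in $E$; this continuity follows from the $\mathcal{C}^2$-dependence of $\attrPlain_E, \repPlain_E$ on $E$ together with compactness of $\T$, since a global minimum of a continuously varying function on a compact domain varies continuously.
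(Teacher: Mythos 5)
Your proof is correct and takes essentially the same route as the paper: both reduce to $\theta \in I(E)$ via \cref{lem: MinimumDistanceAttainedInCriticalInterval}, both use \cref{DerivE} of \cref{lem: DerivativeBoundsCriticalInterval} to bound the $E$-derivative of the minimum gap, and both close by a mean-value-type argument (you integrate the derivative from $E$ to $E_0$; the paper Taylor-expands around $E$ with a Lagrange remainder controlled by \cref{SecondDerivE}). The only real difference is that you make the envelope step ($\partial_\theta F(\theta_c(E),E)=0$ killing the $\theta_c'(E)$ term, justified by IFT and \cref{SecondDerivTheta}) explicit, whereas the paper leaves it implicit when differentiating $\delta(E)=\min_\theta F(\theta,E)$.
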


\begin{proof}
For any $E < E_0$ sufficiently close to $E_0$, $\CondUH_{n}$ is satisfied for some $n \geq 0$. By \cref{lem: MinimumDistanceAttainedInCriticalInterval}, it is sufficient to consider only $\theta \in I$. Set $\delta(E) = d(\theta_c(E))$, and extend it continuously up to $E_0$, where the value is 0. Taylor expansion of $\delta$ gives
\begin{align*}
\delta(E) = \delta(E) - \delta(E_0) = \derivE \delta(E)(E - E_0) - \derivE^2 \delta(\widetilde{E})(E - E_0)^2,
\end{align*}
where $E < \widetilde{E} < E_0$. By the estimate in \cref{SecondDerivE}, the second derivative is uniformly bounded, and the inequality in \cref{DerivE} gives us the desired bounds of the constant.
\end{proof}

\begin{lemma}\label{lem: LengthOfInterval}
For any $E < E_0$ sufficiently close to $E_0$, the length of the interval $I(E)$ satisfies
\begin{align*}
|I(E)| \geq C \sqrt{d(E)},
\end{align*}
where $C > 0$ can be made arbitrarily large as $E \nearrow E_0$.
\end{lemma}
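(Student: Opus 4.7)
The plan is to exploit the huge gap between two decay rates: $\sqrt{d(E)}$ decays essentially exponentially in $N_{k-1}$, whereas $|I_k|$ decays only like $N_k^{-\tau}$, and because of the super-exponential growth relations $M_k \approx \lambda^{M_{k-1}/(4\tau)}$ and $N_k \sim M_k^2$, the exponent $M_{k-1}$ controlling $N_k^{\tau}$ is only of the order $\sqrt{N_{k-1}}$. So the ratio $|I_k|/\sqrt{d(E)}$ blows up as $k \to \infty$, and by the Remark after \cref{ReturnTimeBoundsToScaleChange} the scale $k$ for which $I(E) = I_k + \omega$ does tend to infinity as $E \nearrow E_0$.

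First I would extract the upper bound on $\sqrt{d(E)}$ that is already implicit in the proof of \cref{lem: MinimumDistanceAttainedInCriticalInterval}: when $E \in \UE_{n-1}$ with $I(E) = I_k + \omega$, the inequality \cref{eq: MinimumDistanceLowerBound} gives $d(E) = d(\theta_c) \leq \lambda^{2 - N_{k-1}/30}$, hence $\sqrt{d(E)} \leq \lambda^{1 - N_{k-1}/60}$.

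Second, I would bound $|I_k|$ from below using the Diophantine condition. By \cref{DiophantineReturnBounds}, an interval of length $\varepsilon$ has first return time at least $(\kappa/\varepsilon)^{1/\tau}$; since in the inductive construction $N_k$ is precisely the Diophantine return-time threshold attached to $I_k$ (as in the base case calculation \cref{InitialScaleConstantsRatio}), one has, up to a uniform constant depending only on $\omega$,
\begin{equation*}
|I_k| \;\geq\; c\,\kappa\, N_k^{-\tau}.
\end{equation*}
Using \cref{SuperExponentialRatioMN} ($N_k \approx M_k^2$) and \cref{SuperExponentiaGrowthM} ($M_k \approx \lambda^{M_{k-1}/(4\tau)}$), this gives $N_k^{-\tau} \approx \lambda^{-M_{k-1}/2}$, and hence $|I_k| \gtrsim \lambda^{-M_{k-1}/2}$ up to constants.

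Combining the two bounds,
\begin{equation*}
\frac{|I(E)|}{\sqrt{d(E)}} \;\geq\; c\,\kappa\, \lambda^{N_{k-1}/60 \,-\, M_{k-1}/2 \,-\, 1}.
\end{equation*}
By \cref{SuperExponentialRatioMN} we have $M_{k-1} \sim \sqrt{N_{k-1}}$, so for all sufficiently large $k$ the exponent $N_{k-1}/60 - M_{k-1}/2 - 1$ is positive and tends to $+\infty$ as $k \to \infty$. Since $k \to \infty$ as $E \nearrow E_0$, the right-hand side tends to $+\infty$, which is exactly the claim: $|I(E)| \geq C\sqrt{d(E)}$ with $C$ arbitrarily large for $E$ close enough to $E_0$.

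The main obstacle I foresee is pinning down the lower bound $|I_k| \gtrsim N_k^{-\tau}$ with a constant independent of $k$. Nothing in the statements quoted from \cref{SecInduction} rules out the possibility that the induction picks intervals much shorter than the Diophantine length-to-return-time relation would permit; however, this cannot happen in practice because the induction is set up so that $N_k$ is defined from $|I_k|$ via exactly that relation. Once this consistency is checked against the construction in \cite{BjerkSchrLowEnergy}, the rest of the argument is just comparing two exponents, one linear in $N_{k-1}$ and the other linear in $\sqrt{N_{k-1}}$.
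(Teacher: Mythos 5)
Your proposal is correct and follows essentially the same route as the paper's proof: an upper bound on $\delta(E)$ that is exponentially small in $N_{k-1}$, a lower bound on $|I(E)|$ of size $\lambda^{-M_{k-1}/2}$, and the observation that $M_{k-1} \sim \sqrt{N_{k-1}}$ forces the ratio to blow up as $k \to \infty$ (hence as $E \nearrow E_0$). The ``main obstacle'' you flag at the end is in fact a non-issue: \cref{ScaleIntervalLength} in \cref{InductionResult} gives $|I_j| = c_0/\lambda^{M_{j-1}/2}$ \emph{exactly}, so there is no need to argue via the Diophantine length--return-time relation at all; the paper's proof simply cites this identity directly, and once you do the same your exponent comparison goes through verbatim.
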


\begin{proof}
Suppose that $I_E = I_k + \omega$ for some $0 \leq k \leq n$. Then, by \cref{ReturnTimeBoundsToScaleChange}, we must have
\begin{align*}
\frac1{15} N_{k-1} < \max \{\sigma^+_n, \sigma^-_n\} \leq \frac1{15} N_k.
\end{align*}
By \cref{ScaleIntervalLength}, we have $|I_{k}| = c_0 / \lambda^{M_{k-1}/2}$. Suppose that the maximum is attained for $\sigma^+_n$, and let $\theta_0 \in I$ be such that $\sigma^+(\theta_0, E) = \sigma^+_n$. Then
\begin{align*}
d(\theta_0) D_{0, \sigma^+_n - 1}(\theta_0) = d(\theta_{\sigma^+_n}) \leq \lambda^2,
\end{align*}
and using the estimate \cref{ForwardExpansionUntilST} for $k = \sigma^+_n$ gives us the inequality
\begin{align*}
\sigma^+_n &\leq const + \max \limits_{\theta_0 \in I_E} \log_{\sqrt{\lambda}} \left( \frac1{d(\theta_0)} \right) \leq \\
&= const + \log_{\sqrt{\lambda}} \left( \frac1{\delta(E)} \right),
\end{align*}
where $\delta(E)$ is the smallest distance $\delta(E) = \min \limits_{\theta \in I} d(\theta)$.
Therefore
\begin{align*}
2M_{k-1} \ll \frac1{15} N_{k-1} \leq const + \log_{\sqrt{\lambda}} \left( \frac1{\delta(E)} \right).
\end{align*}
As $E$ gets closer to $E_0$, the distance approaches 0, and it follows that
\begin{align*}
|I_{m}| = c_0 / \lambda^{M_{k-1}/2} \geq const \cdot \sqrt{\delta(E)},
\end{align*}
where the constant can be made arbitrarily large as $n \to \infty$, and therefore as $E \nearrow E_0$.
\end{proof}

\appendix

\section{Summary of inductive construction}\label{SecInduction}

In this section we will summarize the results in \cite{BjerkSchrLowEnergy} that we will use. In particular, the results hold for sufficiently large $\lambda$, and $E \in [-1, E_0)$, where $E_0$ is the lowest energy of the spectrum. Recall the definitions and notation we introduced in \cref{SecInductionNotation}. We recall briefly that
\begin{align*}
A^u_n = \{(\theta, r) | \theta \in I_n + \omega, \phi^{u,-}_n(\theta, E) \leq r \leq \phi^{u,+}_n(\theta, E) \}, \text{ and}\\
A^s_n =  \{(\theta, r) | \theta \in I_n + \omega, \phi^{s, -}_n(\theta, E) \leq r \leq \phi^{s,+}_n(\theta, E) \}.
\end{align*}
Moreover, we had the sets
\begin{align}
\Xi^u_n = \bigcup \limits_{i=0}^{n} \bigcup \limits_{m=1}^{M_i} (I_i + m\omega), \\
\Xi^s_n = \bigcup \limits_{i=0}^{n} \bigcup \limits_{m=0}^{M_i} (I_i - m\omega), \\
\Theta_n = \T \backslash ( \Xi^u_n \cup \Xi^s_n).
\end{align}

The following conditions appear in the statement of the result:

\textbf{Condition} $\CondFirst_n$

If $(\theta_0, r_0) \in \Theta_{n-1} \times B^u$, and $N \geq  0$ is the smallest positive integer such that $\theta_N \in I_n$, then for every integer $0 \leq k \leq N$
\begin{align*}
&r_k \in B, \\
&r_k \not\in B^u \implies \theta_k \in \Xi^u_{n-1} = \bigcup \limits_{i=0}^{n-1} \bigcup \limits_{m=1}^{M_i} (I_i + m\omega).
\end{align*}

If $(\theta_0, r_0) \in \Theta_{n-1} \times B^s$, and $N \geq  0$ is the smallest positive integer such that $\theta_{-N} \in I_n + \omega$, then for every integer $0 \leq k \leq N$
\begin{align*}
&r_{-k} \in B, \\
&r_{-k} \not\in B^s \implies \theta_{-k} \in \Xi^s_{n-1} = \bigcup \limits_{i=0}^{n-1} \bigcup \limits_{m=0}^{M_i} (I_i - m\omega).
\end{align*}

\textbf{Condition} $(\mathcal{C}2)_n$

For $i = 0,1$
\begin{align}
I_n \pm (M_n + i)\omega \subset \Theta_{n-1}.\label{AfterMStepsBackInGood}
\end{align}

Note that in the below statement, $\CondSecond_n$ appears in a different place in the original article. Since the base dynamics is independent of $E$, we see that $\CondSecond_n$ does indeed depend only on $I_n$ and $M_n$. Therefore, the contents of the result remain unchanged.

\begin{lemma}[{\hspace{1sp}\cite[Lemma 5.3]{BjerkSchrLowEnergy}}]\label{InductionResult}
Assume that $\lambda$ is sufficiently large. Then there is an infinite sequence of integers $M_0 < \cdots < M_k < \cdots$, and infinite sequences of closed non-empty intervals $I_0 \supset \cdots \supset I_k \supset \cdots$ and $\E_{-1} \supset \cdots \supset \E_k \supset \cdots$, satisfying $\CondSecond_n$ and
\begin{align}
\lambda^{M_{j-1}/(4\tau)} \leq M_j \leq 2\lambda^{M_{j-1}/(4\tau)} \label{ScaleMRecoveryTime} \\
|I_j| = c_0/\lambda^{M_{j-1}/2} \label{ScaleIntervalLength} \\
\E_j \subset \operatorname{Interior}(\E_{j-1})
\end{align}
for every $j \geq 1$. The \textbf{condition} $\CondFirst_n$ above is satisfied for every $E \in \E_{n-1}$ and $n \geq 0$. Finally, for every $E \in \E_n$ and $\theta \in (I_n + \omega) \backslash (\frac13 I_n + \omega)$
\begin{align}
\phi^{u,-}_n(\theta) > \phi^{s,+}_n(\theta),\label{PositiveDistanceBetweenBoxesOutsideCritical}
\end{align}
and if we write $\E_n = [E_n^-, E_n^+]$, then for $E = E_n^-$, there is a unique $\theta^* \in \frac13 I_n + \omega$ such that
\begin{align}
\phi^{u,-}_n(\theta^*) = \phi^{s,+}_n(\theta^*).\label{UniqueMinimumDistanceBetweenBoxes}
\end{align}
\end{lemma}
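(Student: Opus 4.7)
The statement is a direct restatement of \cite[Lemma 5.3]{BjerkSchrLowEnergy}, so my plan is to outline the inductive construction carried out in that paper and defer the details to it. The approach is a simultaneous induction on the scale $n$, producing the triples $(I_n, M_n, \E_n)$ one at a time. The base case is immediate from the explicit choices in \cref{SecInductionNotation}: $I_0, M_0, \E_{-1}$ are prescribed, and $\CondSecond_0$ follows from \cref{DiophantineReturnBounds} applied to the length $c_0/\sqrt{\lambda}$ of $I_0$.

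For the inductive step, starting from the scale-$n$ data I would use $\CondFirst_n$, together with the strong contraction on $B^u$ and expansion on $B^s$, to run a graph transform on the boxes $B^u_n$ and $B^s_n$ over the $M_n$ iterates guaranteed by $\CondSecond_n$ to lie in $\Theta_{n-1}$. This collapses them into narrow strips $A^u_n, A^s_n$ bounded by $C^2$ graphs $\phi^{u,\pm}_n, \phi^{s,\pm}_n$ over $I_n + \omega$. Next, I would define $\E_n \subset \E_{n-1}$ as the open set of parameters on which these two stacks of graphs remain strictly separated on the buffer region $(I_n+\omega)\setminus(\frac13 I_n + \omega)$, which gives \cref{PositiveDistanceBetweenBoxesOutsideCritical}; the lower endpoint $E_n^-$ is then the threshold case where $\phi^{u,-}_n$ and $\phi^{s,+}_n$ first touch, and uniqueness of the touching point $\theta^*$ follows from a quadratic non-degeneracy argument on the second $\theta$-derivative of $\phi^{u,-}_n - \phi^{s,+}_n$, in the same spirit as \cref{SecondDerivTheta}. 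Finally, $I_{n+1}$ is chosen as a small interval around $\theta^* - \omega$ of length $c_0/\lambda^{M_n/2}$, calibrated so that on $I_{n+1} + \omega$ the graphs are still comparable enough to launch the next step, and $M_{n+1} = [\lambda^{M_n/(4\tau)}]$ is forced by \cref{DiophantineReturnBounds} to be a lower bound on the return time of $I_{n+1}$ to itself, yielding $\CondSecond_{n+1}$ for free.

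The hard part, and the technical heart of \cite{BjerkSchrLowEnergy}, will be verifying $\CondFirst_{n+1}$ for every $E \in \E_n$: one must track an arbitrary orbit $(\theta_0, r_0) \in \Theta_n \times B^u$ all the way to its first visit to $I_{n+1}$, passing near each of the smaller $I_j$ with $j \leq n$, and show that the expansion accumulated while in $\Theta_n$ more than offsets every local loss of hyperbolicity when the orbit passes near such an $I_j$. This requires combining the growth estimates from \cref{SecAbstractGrowthEstimates,SecGrowthFormulas} with the Diophantine return-time control in a delicate piece of bookkeeping, and I would not reproduce the argument here; I would simply refer the reader to \cite{BjerkSchrLowEnergy} for the full derivation.
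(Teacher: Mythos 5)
The paper does not reprove this lemma: it is imported verbatim from \cite[Lemma 5.3]{BjerkSchrLowEnergy}, with only a remark that $\CondSecond_n$ is stated in a different place in the original article but is equivalent because the base dynamics is $E$-independent. Your plan to sketch the construction and defer the details to the same source is therefore the same approach, and most of your outline of the inductive step (graph transform on $B^u_n$, $B^s_n$; choice of $I_{n+1}$ around $\theta^* - \omega$ with length $c_0/\lambda^{M_n/2}$; $M_{n+1}\approx\lambda^{M_n/(4\tau)}$ from \cref{DiophantineReturnBounds}; the bookkeeping for $\CondFirst_{n+1}$) is a fair description of what happens in \cite{BjerkSchrLowEnergy}.

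One part of the sketch, however, is wrong and contradicts itself. You propose to define $\E_n$ as ``the open set of parameters on which the graphs remain strictly separated on the buffer region $(I_n+\omega)\setminus(\frac13 I_n+\omega)$,'' and then claim its lower endpoint $E_n^-$ is the threshold at which $\phi^{u,-}_n$ and $\phi^{s,+}_n$ first touch. These two statements are incompatible: for all $E$ below $E_n^-$ the boxes $A^u_n$ and $A^s_n$ are entirely disjoint, so the graphs are separated \emph{everywhere}, including on the buffer; hence the set you describe contains all such $E$ and its lower endpoint would be the lower endpoint of $\E_{n-1}$, not $E_n^-$. (It is also closed in the lemma, $\E_n=[E_n^-,E_n^+]$, not open.) What actually characterizes $E_n^-$ is that it is the first energy, as $E$ increases, at which $\phi^{u,-}_n$ and $\phi^{s,+}_n$ touch --- and this touching occurs \emph{inside} $\frac13 I_n+\omega$, not on the buffer; the strict separation on the buffer is then a property proved to persist throughout $\E_n$, not the condition used to define $\E_n$. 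Since you are deferring the proof to the reference anyway this does not break the argument, but if you keep the sketch you should fix the description of how $\E_n$ and $E_n^-$ arise.
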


Recall that, for every $n \geq 0$, we have set
\begin{align}
\UE_n = [E_n^-, E_{n+1}^-) \subset \E_n \backslash \E_{n+1},
\end{align}
where we use the notation $\E_n = [E^-_n, E^+_n]$. The following simple observation is buried in the proof of \cref{InductionResult}, and is not crucial to the argument. We include it, simply to reassure the readers, that the sets $\UE_n$ are non-empty.

\begin{lemma}
For every $n \geq -1$, the set $\UE_{n}$ is non-empty.
\end{lemma}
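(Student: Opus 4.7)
The claim $\UE_n \neq \emptyset$ is equivalent to $E_n^- < E_{n+1}^-$, since $\UE_n = [E_n^-, E_{n+1}^-)$ is non-empty iff its lower endpoint is strictly less than its upper endpoint (in which case it contains $E_n^-$). I would split into two cases.

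For $n \geq 0$, the claim is immediate from \cref{InductionResult}: the last displayed line in the statement of that lemma asserts $\E_{j} \subset \operatorname{Interior}(\E_{j-1})$ for every $j \geq 1$. Applying this with $j = n+1$ and writing $\E_n = [E_n^-, E_n^+]$, $\E_{n+1} = [E_{n+1}^-, E_{n+1}^+]$, this yields the strict inequalities $E_n^- < E_{n+1}^- \leq E_{n+1}^+ < E_n^+$. In particular $E_n^- \in \UE_n$, so $\UE_n \neq \emptyset$.

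The remaining case $n = -1$ is not covered by \cref{InductionResult}, since that lemma only provides strict interior containment for $j \geq 1$, and needs a separate argument to show $E_0^- > -1$. The plan is to use \cref{UniqueMinimumDistanceBetweenBoxes} at scale $n = 0$, which says that at $E = E_0^-$ there is a point $\theta^* \in \frac13 I_0 + \omega$ with $\phi^{u,-}_0(\theta^*) = \phi^{s,+}_0(\theta^*)$, and to derive a contradiction from the assumption $E_0^- = -1$. The idea is that at $E = -1$ the cocycle $A_{-1}$ is still uniformly hyperbolic: uniform hyperbolicity is an open condition in $E$, and by hypothesis it holds on $(-\infty, -1)$, hence also at $E = -1$ by openness/continuity. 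Uniform hyperbolicity at $E = -1$ forces a uniform gap between the invariant projective directions $\attrPlain_{-1}$ and $\repPlain_{-1}$, which in turn forces the boxes $A^u_0 = \Phi^{M_0+1}(B^u_0)$ and $A^s_0 = \Phi^{-M_0+1}(B^s_0)$ to be disjoint over $I_0 + \omega$, giving $\phi^{u,-}_0(\theta) > \phi^{s,+}_0(\theta)$ for every $\theta \in I_0 + \omega$. This contradicts \cref{UniqueMinimumDistanceBetweenBoxes} at $E = E_0^- = -1$, so $E_0^- > -1$ and $\UE_{-1} = [-1, E_0^-)$ is non-empty.

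The main obstacle is the $n = -1$ case, where the strict containment is not packaged into the statement of \cref{InductionResult} and one has to tie the boxes $A^u_0, A^s_0$ at $E = -1$ back to the invariant splitting guaranteed by uniform hyperbolicity. In practice one can bypass this by appealing directly to the explicit openness of uniform hyperbolicity together with the fact (already used throughout the paper) that $B^u_0, B^s_0$ are contained in the respective cones defined by $B^u, B^s$, so their forward/backward iterates $A^u_0, A^s_0$ remain within the unstable/stable cones at any uniformly hyperbolic parameter, and hence cannot intersect.
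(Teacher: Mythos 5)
Your split into $n \ge 0$ and $n = -1$ is a genuinely different organization from the paper, and the $n \ge 0$ half is both correct and cleaner than what the paper does: since \cref{InductionResult} records $\E_j \subset \operatorname{Interior}(\E_{j-1})$ for $j \ge 1$, taking $j = n+1$ gives $E_n^- < E_{n+1}^-$ directly, hence $\UE_n = [E_n^-, E_{n+1}^-) \ni E_n^-$. The paper instead runs the same quantitative box-tracking argument uniformly for all $n \ge -1$, which is overkill for $n \ge 0$ once one notices the strict-interior containment.

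The $n = -1$ case, however, has a real gap, and it is exactly the step you flag as the ``main obstacle.'' The assertion that ``uniform hyperbolicity is an open condition in $E$, and by hypothesis it holds on $(-\infty, -1)$, hence also at $E = -1$ by openness/continuity'' is the wrong direction of implication. Openness of the uniformly hyperbolic locus says: \emph{if} $A_{-1}$ is uniformly hyperbolic, then so is $A_E$ for $E$ near $-1$. It does \emph{not} say that uniform hyperbolicity on $(-\infty, -1)$ propagates to the boundary point $-1$; the non-UH locus (the spectrum) is closed, so $-1$ could a priori be its left endpoint. Establishing $E_0^- > -1$ is precisely the content that needs a quantitative argument. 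A second, subsidiary gap: even granted uniform hyperbolicity at $E=-1$, it is not immediate that $A^u_0$ and $A^s_0$ are disjoint, because $B^u$ and $B^s$ are \emph{not} invariant cones globally --- the dynamics rotates over $I_0$ and pushes points out of $B^u$ and $B^s$ --- so a uniform gap between $\attrPlain_{-1}$ and $\repPlain_{-1}$ does not by itself force disjointness of the $(M_0+1)$-step images of the full boxes. What the paper actually does for this base case (and for all $n$) is track the boxes $B^u_{n+1}, B^s_{n+1}$ through the iteration using $\CondFirst_{n+1}$, $\CondSecond_n$, and \cref{NextInGoodIfNotInBad}, showing that the iterated boundaries satisfy $\phi^{s,+}_{n+1} < \phi^{s,+}_n \leq \phi^{u,-}_n < \phi^{u,-}_{n+1}$ at $E = E_n^-$ via \cref{PositiveDistanceBetweenBoxesOutsideCritical,UniqueMinimumDistanceBetweenBoxes}, and then invokes \cref{UniqueMinimumDistanceBetweenBoxes} at scale $n+1$ to conclude $E_n^- < E_{n+1}^-$. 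You should replace the openness argument in your $n=-1$ case with this concrete box-tracking argument (at scale $0$, with $E = -1$), which is what the paper is really built to deliver.
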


\begin{proof}
Let $E = E^-_n \in \E_n$, and consider the sets $B^u_{n+1}$ and $B^s_{n+1}$ given in \cref{InitialBoxes}. Iterating $B^u_{n+1}$ forward by $M_{n+1} - M_n - 1$ steps, the result lies over $I_{n+1} - (M_n + 1)\omega$. Similiarly, iterating $B^s_{n+1}$ backwards by $M_{n+1} - M_n - 1$ steps, the result lies over $I_{n+1} + (M_n + 1)\omega$. Since $E = E^-_n \in \E_n$, the conditions $\CondFirst_{n+1}$ and $\CondSecond_n$ are satisfied. Therefore $\CondFirst_{n+1}$ implies that
\begin{gather*}
\Phi_E^{M_{n+1} - M_n - 1}(B^u_{n+1}) \subset (I_n - (M_n + 1)\omega) \times B \backslash B^s,  \text{ and}\\
\Phi_E^{-(M_{n+1} - M_n) + 1}(B^s_{n+1}) \subset (I_n + (M_n + 1)\omega) \times B \backslash B^u,
\end{gather*}
and $\CondSecond_n$ implies that both $I_n + (M_n + 1)\omega$ and $I_n - (M_n + 1)\omega$ have empty intersection with $I_0$. Applying \cref{NextInGoodIfNotInBad}, we obtain
\begin{gather*}
\Phi^{M_{n+1} - M_n}(B^u_{n+1}) \subset (I_n - M_n\omega) \times (\lambda, \lambda^2] \subset B^u_n, \text{ and}\\
\Phi^{-(M_{n+1} - M_n)}(B^s_{n+1}) \subset (I_n + M_n\omega) \times [\lambda^{-2}, \lambda^{-1}) \subset B^s_n.
\end{gather*}
Recall that $\phi^{u, -}_k = \Phi^{M_k + 1}((I_k - M_k\omega) \times \{\lambda\})$, the lower boundary of $\Phi^{M_k + 1}(B^u_k)$, and $\phi^{u, +}_k = \Phi^{M_k + 1}(I_k - M_k\omega) \times \{\lambda^{-1}\})$, the upper boundary of $\Phi^{-M_k + 1}(B^s_k)$. Since we have restricted to parameters that preserve orientation, the lower (upper) boundary of $B^u_k$ ($B^s_k$) are indeed the forward (backward) iterates of the endpoints $\lambda$ and $\lambda^{-1}$.

Since the intervals $(\lambda, \lambda^2]$ and $[\lambda^{-2}, \lambda^{-1})$ don't include the endpoints, this means that $\phi^{s,+}_{n+1} < \phi^{s,+}_n$, and similarly $\phi^{u,-}_n <  \phi^{u,-}_{n+1}$. Since $E = E^-_n$, \cref{UniqueMinimumDistanceBetweenBoxes,PositiveDistanceBetweenBoxesOutsideCritical} imply that $\phi^{s,+}_n \leq \phi^{u,-}_n$ in $I_{n+1} + \omega$. It follows that $A^u_{n+1}$ and $A^s_{n+1}$ do not intersect for $E = E^-_n$. Since $E = E^-_n \leq E^-_{n+1}$ and they do intersect for $E^-_{n+1}$ (again using \cref{UniqueMinimumDistanceBetweenBoxes}, but for $n+1$), it follows that $E^-_n < E^-_{n+1}$. Therefore $\UE_n$ is non-empty.
\end{proof}

\section{Abstract growth estimates}\label{SecAbstractGrowthEstimates}

This section is divided into two parts. The first part is independent of the model at hand, and simply gives bounds on the relative time spent in certain collections of interval systems. The second part gives growth estimates for the expansion, given the previous estimates applied to interval systems satisfying some conditions. In the end, these will all be applied to the collection of interval systems $\Xi^{u/s}_k$ (recall their definition in \cref{ExceptionalForwardSystem,ExceptionalBackwardSystem}).

\subsection{Relative time spent in interval systems}

Suppose that we are given a $\Sigma \subset \T$. We call $r > 0$ the {\em minimal return time} if $\theta_i, \theta_{i+j} \in \Sigma$, but $\theta_{i+s} \not\in \Sigma$ for some $0 < s < j$, forces $j > r$. That is, once $\theta_i$ leaves $\Sigma$, then it won't return to $\Sigma$ for at least $r$ iterates.

Similarly, we call $l > 0$ the {\em maximal confinement time} if $\theta_i, \dots, \theta_{i+j} \in \Sigma$ forces $j \leq l$. That is, a point can stay in $\Sigma$ for at most $l$ successive iterates.

If $\theta_0 \in \T$, then we say that it has {\em accumulation time} $a \geq 0$, with respect to $\Sigma$, if $\theta_i \not\in \Sigma$ for $0 \leq i < a$. That is, $\theta_0$ enters $\Sigma$ after $a$ iterations, but not before that.

We will refer to $\Sigma$ as an $(r, l)$-system, and to $(\Sigma, \theta_0)$ as an $(r, l, a)$-system.\newline

In the same way, we define reversed $(r, l, a)$-systems, having an acumulation criterion, but iterating backwards. The return and confinement conditions are the same, but instead, we say that the system $(\Sigma, \theta_0)$ has {\em reversed accumulation time} $a \geq 0$ , if $\theta_{-i} \not\in \Sigma$ for $0 \leq i < a$.

\begin{lemma}\label{TimeSpentInIntervalSystem}
Let $n \geq 0$ be an integer and $\theta_0 \in \T$. Suppose that for every $0 \leq k \leq n$ we are given a $\Sigma_k$ such that $(\Sigma_k, \theta_0)$ is an $(r_k, l_k, a_k)$-system. Then for every $0 < t$ we have the upper bounds
\begin{align}
\frac{| \{ 0 \leq j < t : \theta_j \in \bigcup \limits_{k = 0}^n \Sigma_k \} |}{t} \leq \sum \limits_{k = 0}^{n} \frac{l_k}{t} + \frac{l_k}{r_k + l_k},\label{eq: TimeSpentInIntervalSystemNoBuildup}
\end{align}
and
\begin{align}
\frac{| \{ 0 \leq j < t : \theta_j \in \bigcup \limits_{k = 0}^n \Sigma_k \} |}{t} \leq \sum \limits_{k = 0}^{n} \frac{l_k}{m_k + l_k},
\end{align}
where $m_k = \min \{ a_k, r_k \}$.
\end{lemma}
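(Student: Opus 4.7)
The plan is to apply a union bound over the index $k$, and then, for each $\Sigma_k$ individually, bound
\[
C_k := |\{0 \leq j < t : \theta_j \in \Sigma_k\}|
\]
by a simple counting argument based on the $(r_k, l_k, a_k)$-structure of $(\Sigma_k, \theta_0)$. Once the two bounds on $C_k/t$ are proved, the inequalities of the lemma follow from $|\{0 \leq j < t : \theta_j \in \bigcup_k \Sigma_k\}| \leq \sum_k C_k$.

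Fix $k$, and group the indices $0 \leq j < t$ with $\theta_j \in \Sigma_k$ into maximal runs of consecutive integers, letting $N_k$ be the number of such runs. The key facts are: (i) by the maximal confinement property, each run has length at most $l_k$, so $C_k \leq N_k l_k$; (ii) by the minimal return property, between any two successive runs there are at least $r_k$ indices lying in $[0,t)$ outside $\Sigma_k$; (iii) by the accumulation property, if $N_k \geq 1$, the first run begins at index $\geq a_k$, so there are at least $\min\{a_k, t\}$ indices in $[0,t)$, outside $\Sigma_k$, preceding the first run. If $N_k = 0$ then $C_k = 0$ and both inequalities hold trivially, so we assume $N_k \geq 1$ throughout.

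For \cref{eq: TimeSpentInIntervalSystemNoBuildup}, discarding the accumulation buffer, I would partition $[0,t)$ into the $C_k$ run-indices and the at least $(N_k - 1)r_k$ inter-run indices, obtaining $t \geq C_k + (N_k - 1)r_k$. Combined with $N_k \leq C_k / l_k + 1$ (which follows from $C_k \leq N_k l_k$ together with $N_k$ being an integer, or more simply by substituting the bound $N_k \leq 1 + (t - C_k)/r_k$ from the first inequality into $C_k \leq N_k l_k$), this rearranges to
\[
(r_k + l_k) C_k \leq r_k l_k + l_k t,
\]
whence $C_k/t \leq l_k/t + l_k/(r_k + l_k)$. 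Summing over $k$ gives the first claim.

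For the second inequality, I would refine the count by including the accumulation buffer: $t \geq C_k + a_k + (N_k - 1)r_k$. Since $a_k \geq m_k$ and $r_k \geq m_k$, this gives $t \geq C_k + N_k m_k$. Using $N_k \geq C_k / l_k$, I obtain $t \geq C_k(l_k + m_k)/l_k$, i.e.\ $C_k/t \leq l_k/(l_k + m_k)$, and summation over $k$ finishes the proof. The argument is purely combinatorial, and the only subtlety is handling the boundary cases ($N_k = 0$, or $a_k \geq t$), all of which reduce immediately to $C_k = 0$; I do not expect a serious obstacle.
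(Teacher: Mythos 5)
Your proof is correct and follows essentially the same counting argument as the paper: both decompose the time interval into runs inside $\Sigma_k$ and gaps between them, bound each run by $l_k$, each gap by at least $r_k$, and the initial buffer by $a_k$, then rearrange. One small slip: you write ``$N_k \leq C_k/l_k + 1$ (which follows from $C_k \leq N_k l_k$\dots)'', but $C_k \leq N_k l_k$ gives the reverse inequality $N_k \geq C_k/l_k$ --- which is in fact what the argument actually needs --- and the alternative derivation you give immediately afterward (substituting $N_k \leq 1 + (t - C_k)/r_k$ into $C_k \leq N_k l_k$) is correct, so the proof goes through unchanged.
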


\begin{remark}
It is clear that any $(r_k, l_k)$-system $\Sigma_k$ makes an $(r_k, l_k, a_k)$-system by simply adding an arbitrary $\theta_0 \in \T$. Therefore the first inequality in the above result can be applied directly to systems without a reference point $\theta_0$. That is given a collection of $(r_k, l_k)$-systems $\Sigma_k$ for $0 \leq k \leq 0$, we have the inequality
\begin{align*}
\frac{| \{ 0 \leq j < t : \theta_j \in \bigcup \limits_{k = 0}^n \Sigma_k \} |}{t} \leq \sum \limits_{k = 0}^{n} \frac{l_k}{t} + \frac{l_k}{r_k},
\end{align*}
for any choice of $\theta_0 \in \T$ and $t > 0$.
\end{remark}

\begin{proof}
If $t < a_k$, then
\begin{align*}
\{ 0 \leq j < t : \theta_j \in \Sigma_k \} = \emptyset.
\end{align*}
Therefore suppose that $t \geq a_k$, and partition the interval $[a_k, t)$ into smaller intervals $[t_i, t_{i+1})$, where $a_k = t_0 < \cdots < t_{p_k} \leq t$ are the times such that $\theta_j \in \Sigma_k$ for $t_i \leq j < s_i < t_{i+1}$, and $\theta_j \not\in \Sigma_k$ for $s_i \leq j < t_{i+1}$. Then for every $0 \leq i \leq p_k$,
\begin{align*}
\pi_i = \frac{| \{ t_i \leq j < t_{i+1} : \theta_j \in \Sigma_k \} |}{t} \leq \frac{l_k}{t}.
\end{align*}
Since $t_{i+1} - t_i \geq r_k + \pi_i t$ and $t - t_{p_k} \geq \pi_{p_k} t$, we get the inequality
\begin{align*}
    t - a_k \geq p_k r_k + \sum \limits_{i = 0}^{p_k} \pi_i t \geq p_k r_k + (p_k + 1)l_k.
\end{align*}
Now, consider the sum
\begin{align*}
    \sum \limits_{i = 0}^{p_k} \pi_i = \frac{| \{ 0 \leq j < t : \theta_j \in \bigcup \limits_{k = 0}^n \Sigma_k \} |}{t}
\end{align*}
We will treat this sum in two different ways. The first one is rewriting
\begin{align*}
    \sum \limits_{i = 0}^{p_k} \pi_i = \pi_0 + \sum \limits_{i = 1}^{p_k} \frac{\pi_i t}{t} \leq \frac{l_k}{t} + \frac{p_k l_k}{a_k + p_k r_k + (p_k + 1)l_k} \leq \frac{l_k}{t} + \frac{l_k}{r_k}.
\end{align*}
The second way proceeds by writing $m_k = \min(a_k, r_k)$ and using the bounds
\begin{align*}
\sum \limits_{i = 0}^{p_k} \pi_i =  \sum \limits_{i = 0}^{p_k} \frac{\pi_i t}{t} \leq \frac{ \sum \limits_{i = 0}^{p_k} \pi_i t}{a_k + p_k r_k + \sum \limits_{i = 0}^{p_k} \pi_i t} \leq \frac{(p_k + 1)l_k}{(p_k + 1)m_k + (p_k + 1)l_k}.
\end{align*}
Doing the same for every $0 \leq k \leq n$, and adding them together, we end up with the two inequalities above.
\end{proof}

In the same way, one can prove the following.

\begin{lemma}\label{TimeSpentInIntervalSystemReversed}
Let $n \geq 0$ be an integer and $\theta_0 \in \T$. Suppose that for every $0 \leq k \leq n$ we are given a $\Sigma_k$ such that $(\Sigma_k, \theta_0)$ is a reversed $(r_k, l_k, a_k)$-system. Then for every $0 < t$ we have the upper bounds
\begin{align}
\frac{| \{ 0 \leq j < t : \theta_{-j} \in \bigcup \limits_{k = 0}^n \Sigma_k \} |}{t} \leq \sum \limits_{k = 0}^{n} \frac{l_k}{t} + \frac{l_k}{r_k + l_k},
\end{align}
and
\begin{align}
\frac{| \{ 0 \leq j < t : \theta_{-j} \in \bigcup \limits_{k = 0}^n \Sigma_k \} |}{t} \leq \sum \limits_{k = 0}^{n} \frac{l_k}{m_k + l_k},
\end{align}
where $m_k = \min \{ a_k, r_k \}$.
\end{lemma}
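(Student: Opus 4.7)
The plan is to reduce \cref{TimeSpentInIntervalSystemReversed} to the already-proved forward statement \cref{TimeSpentInIntervalSystem} by time reversal, thereby avoiding any new computation. Define the auxiliary rotation $\widetilde T(\theta)=\theta-\omega$ and set $\widetilde\theta_j=\theta_{-j}$, so that $(\widetilde\theta_j)_{j\ge 0}$ is precisely the forward orbit of $\theta_0$ under $\widetilde T$. The core observation to justify first is that the three quantities that enter the hypotheses, the minimal return time $r_k$, the maximal confinement time $l_k$, and the accumulation time $a_k$, are all preserved when one passes from $T(\theta)=\theta+\omega$ to $\widetilde T$. This is because both $T$ and $\widetilde T$ are rigid isometries of $\T$, so a block of consecutive hits $\theta_i,\dots,\theta_{i+j}\in\Sigma$ under $T$ corresponds to a reversed block $\widetilde\theta_{-i-j},\dots,\widetilde\theta_{-i}\in\Sigma$ of the same length under $\widetilde T$, and a gap between two hit-blocks is likewise reflected into a gap of equal length.

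Concretely, the step I would carry out is: verify that, under the orbit $(\widetilde\theta_j)$, each $\Sigma_k$ still satisfies the $(r_k,l_k)$ return/confinement conditions (by the symmetry just described), and that the reversed accumulation hypothesis $\theta_{-i}\notin\Sigma_k$ for $0\le i<a_k$ translates into the forward accumulation hypothesis $\widetilde\theta_i\notin\Sigma_k$ for the same range. Hence $(\Sigma_k,\theta_0)$ is an $(r_k,l_k,a_k)$-system in the sense of \cref{TimeSpentInIntervalSystem} applied to $\widetilde T$. Since that lemma's proof never used any property of the rotation beyond its being a measure-preserving bijection (only the combinatorial structure of hit-blocks and gaps on an orbit), it applies verbatim to the $\widetilde T$-orbit of $\theta_0$, yielding
\begin{align*}
\frac{|\{0\le j<t:\widetilde\theta_j\in\bigcup_{k=0}^n\Sigma_k\}|}{t}
&\le\sum_{k=0}^n\Big(\frac{l_k}{t}+\frac{l_k}{r_k+l_k}\Big),\\
\frac{|\{0\le j<t:\widetilde\theta_j\in\bigcup_{k=0}^n\Sigma_k\}|}{t}
&\le\sum_{k=0}^n\frac{l_k}{m_k+l_k}.
\end{align*}
Rewriting $\widetilde\theta_j=\theta_{-j}$ gives exactly the two bounds claimed.

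There is essentially no obstacle: the only point requiring a line of justification is the preservation of $r_k$ and $l_k$ under time reversal, which follows from the fact that the set $\{j\in\Z:\theta_j\in\Sigma\}$ and its reflection $\{-j:\theta_j\in\Sigma\}$ have the same combinatorics of maximal runs and gaps. If a self-contained proof is preferred over the reduction, one can instead repeat the argument of \cref{TimeSpentInIntervalSystem} with indices replaced by their negatives: partition $[a_k,t)$ as $a_k=t_0<t_1<\cdots<t_{p_k}\le t$, where $\theta_{-j}\in\Sigma_k$ on $[t_i,s_i)$ and $\theta_{-j}\notin\Sigma_k$ on $[s_i,t_{i+1})$, bound each hit-block by $l_k$, use the return condition to bound $t_{i+1}-t_i\ge r_k+\pi_i t$, and then combine the resulting inequality either as $\pi_0+\sum_{i\ge 1}\pi_i\le l_k/t+l_k/r_k$ or as $\sum_i\pi_i\le (p_k+1)l_k/((p_k+1)(m_k+l_k))$ with $m_k=\min(a_k,r_k)$, exactly as in the forward case. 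Summing over $k$ yields the two claimed inequalities.
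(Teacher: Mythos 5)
Your proposal is correct and is essentially the argument the paper has in mind: the paper simply says ``In the same way, one can prove the following,'' and your direct-repeat alternative is exactly that, while your time-reversal reduction (conjugating by $\widetilde T=T^{-1}$, noting that the return/confinement conditions are symmetric in the index and that reversed accumulation becomes forward accumulation, and observing that the proof of \cref{TimeSpentInIntervalSystem} is purely combinatorial in the hit-set of the orbit) is a clean formalization of the same idea.
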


\subsection{Growth estimates}

In this section, we will assume that our starting points $(\theta_0, r_0)$ and $(\theta_0, s_0)$ satisfy that $r_k \in B = [\lambda^{-2}, \lambda^2]$ for every $k \in \Z$. That is, we assume that the set $\T \times B$ is invariant. We recall the other notation in \cref{SecInductionNotation}, namely $B^s = [\lambda^{-2}, \lambda^{-1}]$ and $B^u = [\lambda, \lambda^2]$. Moreover, we will assume that $\lambda$ is sufficiently large for the statements in this section to hold. It will be clear in the proofs where we assume that $\lambda$ is large.

\begin{lemma}\label{ForwardContractionGivenTimeSpentInBad}
Suppose that we are given a set $\Sigma \subset \T$, a point $(\theta_0, r_0)$, and a $t > 0$ such that
\begin{align*}
\frac{| \{ 0 \leq j < t : \theta_j \in \Sigma \} |}{t} \leq \rho,
\end{align*}
for some $0 \leq \rho \leq 1$. If $\theta_j \not\in \Sigma \implies r_j \in B^u$, for every $j \in [0, t)$, then
\begin{align}
r_0^{\alpha_0} \cdots r_{t-1}^{\alpha_{t-1}} \geq \lambda^{t(1 - 5\rho)} \label{ForwardExponentEstimates}
\end{align}
for every choice of $\alpha_0, \dots, \alpha_{t-1} \in [1,2]$.

Similarly, if $\theta_{j} \not\in \Sigma \implies r_{j} \in B^s$, for every $j \in [0, t)$, then
\begin{align}
r_0^{\alpha_0} \cdots r_{t - 1}^{\alpha_{t - 1}} \leq \lambda^{-t(1 - 5\rho)} \label{BackwardExponentEstimates}
\end{align}
for every choice of $\alpha_0, \dots, \alpha_{t-1} \in [1,2]$.
\end{lemma}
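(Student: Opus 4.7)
The plan is to split the index set $[0,t)$ into a good part $G = \{j : \theta_j \notin \Sigma\}$ and a bad part $X = \{j : \theta_j \in \Sigma\}$, estimate each factor of the product separately using the trivial bounds from the invariant region $B$ on the bad indices and the sharper bounds from $B^u$ (resp. $B^s$) on the good indices, and then combine. The hypothesis supplies $|X| \leq \rho t$ and hence $|G| \geq (1-\rho)t$.

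For the first inequality, I would argue as follows. On $G$ we have $r_j \in [\lambda,\lambda^2]$, so $r_j^{\alpha_j} \geq \lambda^{\alpha_j} \geq \lambda$ since $\alpha_j \geq 1$. On $X$ we only know $r_j \in [\lambda^{-2},\lambda^2]$, which gives the pessimistic lower bound $r_j^{\alpha_j} \geq \lambda^{-2\alpha_j} \geq \lambda^{-4}$ since $\alpha_j \leq 2$. Multiplying over $j \in [0,t)$ yields
\begin{align*}
r_0^{\alpha_0} \cdots r_{t-1}^{\alpha_{t-1}} \geq \lambda^{|G|} \cdot \lambda^{-4|X|} \geq \lambda^{(1-\rho)t - 4\rho t} = \lambda^{t(1 - 5\rho)},
\end{align*}
which is exactly \cref{ForwardExponentEstimates}.

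The second inequality follows by a symmetric argument. On $G$ we now have $r_j \in [\lambda^{-2},\lambda^{-1}]$, so $r_j^{\alpha_j} \leq \lambda^{-\alpha_j} \leq \lambda^{-1}$. On $X$ the trivial bound gives $r_j^{\alpha_j} \leq \lambda^{2\alpha_j} \leq \lambda^{4}$. Multiplying yields
\begin{align*}
r_0^{\alpha_0} \cdots r_{t-1}^{\alpha_{t-1}} \leq \lambda^{-|G|} \cdot \lambda^{4|X|} \leq \lambda^{-(1-\rho)t + 4\rho t} = \lambda^{-t(1 - 5\rho)},
\end{align*}
which is \cref{BackwardExponentEstimates}.

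There is really no obstacle here; the lemma is essentially a bookkeeping statement turning a frequency estimate into a multiplicative estimate. The only mild subtlety is keeping track of the two competing worst-case exponents (contribution $+1$ per good step versus $-4$ per bad step, and vice versa), which is where the factor $5 = 1 + 4$ in the final exponent $t(1-5\rho)$ comes from. No assumption on $\lambda$ being large is needed for this particular statement, beyond $\lambda > 1$ to make the inequalities meaningful.
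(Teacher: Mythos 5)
Your proof is correct and is essentially the paper's own argument: split the indices into those where $\theta_j\in\Sigma$ (at most $\rho t$ of them, each contributing the trivial factor $\lambda^{\pm 4}$ from $r_j\in B$) and those where $\theta_j\notin\Sigma$ (at least $(1-\rho)t$ of them, each contributing $\lambda$ from $B^u$, resp.\ $\lambda^{-1}$ from $B^s$), and multiply. Your closing remark that only $\lambda>1$ is needed here is also accurate.
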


\begin{proof}
If $r_j \in B^u = [\lambda, \lambda^2]$, then for every $\alpha_j \in [1,2]$,
\begin{align*}
\lambda^{\alpha_j} \leq r_j^{\alpha_j} \leq \lambda^{2\alpha_j}.
\end{align*}
Moreover, if $r_j \in B \backslash B^u = [\lambda^{-2}, \lambda)$, then
\begin{align*}
\lambda^{-2\alpha_j} \leq r_j^{\alpha_j} \leq \lambda^{\alpha_j}.
\end{align*}
Therefore
\begin{align*}
r_0^{\alpha_0} \cdots r_{t-1}^{\alpha_{t-1}} \geq \lambda^{t(1 - \rho)} \lambda^{-4t\rho} = \lambda^{t(1 - 5\rho)}.
\end{align*}
The proof of the second statement is analogous, noting that if $r_j \in (\lambda^{-1}, \lambda^2]$, then
\begin{align*}
\lambda^{-2} \leq r_j^{\alpha_j} \leq \lambda^4,
\end{align*}
and that
\begin{align*}
\lambda^{-4} \leq r_j^{\alpha_j} \leq \lambda^{-1},
\end{align*}
if $r_j \in [\lambda^{-2}, \lambda^{-1}]$.
\end{proof}

\begin{lemma}\label{CoupledForwardExpansionAfterCritical}
Suppose that we are given a set $\Sigma$, a $\theta_0 \in \T$, and a $t > 0$ such that
\begin{align*}
\frac{| \{ 0 \leq j < t : \theta_j \in \Sigma \} |}{t} \leq \rho,
\end{align*}
for some $0 \leq \rho \leq 1$. If $0 \leq r_j - s_j < \lambda^{-3}$ and $\theta_j \not\in \Sigma \implies s_j \in B^s$, for every $j \in [0, t)$, then
\begin{align}
r_0 s_0 \cdots r_{t-1} s_{t-1} \leq \lambda^{-t(5 - \rho)}. \label{CoupledForwardExponentEstimates}
\end{align}
Similarly, if $0 \leq r_j - s_j < \lambda^{-3}$ and $\theta_{j} \not\in \Sigma \implies r_{-j} \in B^u$, for every $j \in (-t, 0]$, then
\begin{align}
r_0 s_0 \cdots r_{-t+1} s_{-t+1} \geq \lambda^{t(1 - 5\rho)}. \label{CoupledBackwardExponentEstimates}
\end{align}
\end{lemma}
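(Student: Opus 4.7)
The plan is to mirror the structure of \cref{ForwardContractionGivenTimeSpentInBad}, splitting the time window $[0, t)$ into \emph{good} indices (those $j$ with $\theta_j \notin \Sigma$) and \emph{bad} indices, and applying a different bound in each case before multiplying. The new ingredient here is the coupling hypothesis $0 \leq r_j - s_j < \lambda^{-3}$: because $\lambda^{-3}$ is much smaller than the widths of $B^u$ and $B^s$, one-sided information on $s_j$ (respectively $r_{-j}$) automatically transfers to $r_j$ (respectively $s_{-j}$), so both factors of each product $r_j s_j$ sit in the favourable region together.

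For \cref{CoupledForwardExponentEstimates}, when $\theta_j \notin \Sigma$ the hypothesis $s_j \in B^s = [\lambda^{-2}, \lambda^{-1}]$ combined with $r_j \leq s_j + \lambda^{-3}$ yields $r_j \leq \lambda^{-1} + \lambda^{-3}$, hence
\begin{equation*}
r_j s_j \;\leq\; \lambda^{-1}\bigl(\lambda^{-1} + \lambda^{-3}\bigr) \;\leq\; \lambda^{-2}\bigl(1 + \lambda^{-2}\bigr).
\end{equation*}
When $\theta_j \in \Sigma$, we only retain the trivial bound coming from $B$-invariance, namely $r_j s_j \leq (\lambda^2 + \lambda^{-3})\lambda^2$. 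Multiplying the individual factors across the at least $(1-\rho)t$ good indices and at most $\rho t$ bad indices, and absorbing the $(1 + \lambda^{-2})$-type corrections into the base $\lambda$-exponent (permissible since $\lambda$ is taken large), gives the stated upper bound on the product.

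For \cref{CoupledBackwardExponentEstimates}, the same argument applies in reverse. Now when $\theta_{-j} \notin \Sigma$ the assumption $r_{-j} \in B^u = [\lambda, \lambda^2]$ together with $r_{-j} - s_{-j} < \lambda^{-3}$ forces $s_{-j} \geq r_{-j} - \lambda^{-3} \geq \lambda - \lambda^{-3}$, so that $r_{-j} s_{-j} \geq \lambda^2(1 - \lambda^{-4})$; for the bad indices we only use $r_{-j} s_{-j} \geq \lambda^{-4}$ from $B$-invariance. Multiplying these two classes of bounds yields the required lower estimate.

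The only step requiring care, though it is not deep, is the accounting of the small multiplicative factors $(1 \pm \lambda^{-2})$ and $(1 \pm \lambda^{-4})$, which must be absorbed into the $\lambda$-exponent by exploiting the large-$\lambda$ assumption, exactly as in \cref{ForwardContractionGivenTimeSpentInBad}. No dynamical input beyond the density hypothesis on the bad set is required, since the coupling itself delivers the refined bounds on each $r_j s_j$ without any further use of the cocycle structure.
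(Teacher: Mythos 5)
Your method mirrors the paper's exactly: split $[0,t)$ into good indices ($\theta_j\notin\Sigma$) and bad indices, bound $r_js_j$ in each class using the coupling hypothesis $0\le r_j-s_j<\lambda^{-3}$ to transfer one-sided information about $s_j$ to $r_j$, and multiply. The paper's own proof does the same thing with the slightly cruder per-step bounds $r_js_j<\lambda^{-1}$ (good) and $r_js_j\le\lambda^4$ (bad).

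However, the final step you glossed over --- ``gives the stated upper bound'' --- is where the problem lies, and writing out the arithmetic would have revealed it. Over $(1-\rho)t$ good and $\rho t$ bad indices, the paper's bounds give
\begin{equation*}
r_0 s_0 \cdots r_{t-1}s_{t-1} \;\le\; \lambda^{-(1-\rho)t}\,\lambda^{4\rho t} \;=\; \lambda^{-t(1-5\rho)},
\end{equation*}
and your sharper good-index bound $r_js_j\lesssim 2\lambda^{-2}$ gives something of the same form with a slightly better constant. Neither computation --- nor any computation --- can produce the displayed exponent $\lambda^{-t(5-\rho)}$: since $r_j,s_j\ge\lambda^{-2}$ always, one has $r_0 s_0 \cdots r_{t-1}s_{t-1}\ge\lambda^{-4t}$ unconditionally, so for $\rho$ small the bound $\lambda^{-t(5-\rho)}\approx\lambda^{-5t}$ is simply false. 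The displayed exponent in \cref{CoupledForwardExponentEstimates} is a misprint; it should read $\lambda^{-t(1-5\rho)}$, mirroring the second inequality \cref{CoupledBackwardExponentEstimates} (and, incidentally, the exponent $\lambda^{t(5-\rho)}$ in the paper's own proof of the lemma is a misprint as well). So your proposal is correct in approach and your per-step coupling argument is sound, but you should display the final exponent count explicitly and note that the statement as printed is too strong to be true.
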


\begin{proof}
For every $0 \leq j < t$, the distance $r_j - s_j < \lambda^{-3}$. Therefore, $s_j \in B^s \implies \lambda^{-2} \leq r_j \leq \lambda^{-1} + \lambda^{-3} < 2\lambda^{-1}$. Hence $s_j \in B^s$ implies that $\lambda^{-4} \leq s_j r_j \leq 2\lambda^{-2} < \lambda^{-1}$, if $\lambda$ is sufficiently large.

Therefore, $\theta_j \not\in \Sigma \implies s_j r_j < \lambda^{-1}$, and $\theta_j \in \Sigma \implies s_j r_j \leq \lambda^4$. This implies that
\begin{align*}
r_0 s_0 \cdots r_{t-1} s_{t-1} \leq \lambda^{t(5 - \rho)}.
\end{align*}
The second part is proved in a similar way.
\end{proof}

\begin{lemma}\label{AfterSeparatingForwardsBackInGood}
Let $\theta_0 \in \T$ and $s_0 < r_0$. If we suppose that $r_0 - s_0 \geq \lambda^{-7}$, then
	\begin{align*}
	\frac{| \{ j : 0 \leq j < t, r_j \in B^s \} |}{t} \leq \frac23 + \frac3{2t},
	\end{align*}
for every $t > 0$. Moreover, for any $0 \leq m \leq n$, there is a $0 \leq j \leq 10M_m$ such that
\begin{align}
\theta_j \in \Theta_m, r_j \in B^u.
\end{align}
\end{lemma}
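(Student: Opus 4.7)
The plan is to prove the two assertions separately, with the first supplying the quantitative input for the second.

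For the first bound, I would exploit that the fibre map $r \mapsto \lambda^2 v(\theta) - E - 1/r$ has positive derivative on $B$, so the hypothesis $s_0 < r_0$ together with the invariance of $B$ forces $s_j < r_j$ in $B$ for all $j \geq 0$. Consequently, whenever $r_j \in B^s$, also $s_j \in [\lambda^{-2}, r_j] \subset B^s$, so $r_j s_j \leq \lambda^{-2}$; otherwise the trivial bound $r_j s_j \leq \lambda^4$ applies. Setting $K = |\{0 \leq j < t : r_j \in B^s\}|$, this gives $\prod_{j=0}^{t-1} r_j s_j \leq \lambda^{4t - 6K}$. On the other hand, iterating \cref{DistanceNextStep} yields the identity $r_t - s_t = (r_0-s_0)/\prod_{j=0}^{t-1} r_j s_j$, and combining $r_0 - s_0 \geq \lambda^{-7}$ with $r_t - s_t \leq \lambda^2$ forces $\prod_{j=0}^{t-1} r_j s_j \geq \lambda^{-9}$. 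Comparing $\lambda^{4t - 6K} \geq \lambda^{-9}$ rearranges directly into $K/t \leq 2/3 + 3/(2t)$.

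For the second claim I would set $t = 10 M_m$ and combine three ingredients: the first part, \cref{NextInGoodIfNotInBad}, and \cref{TimeSpentInIntervalSystem}. The first part gives $|\{j \in [0, t) : r_j \not\in B^s\}| \geq t/3 - 3/2$; applying \cref{NextInGoodIfNotInBad} forward, every such $j$ with $\theta_j \not\in I_0$ (all but at most $t/N_0 + 1$ of them) produces $r_{j+1} \in B^u$, yielding
\begin{equation*}
|\{1 \leq j \leq 10 M_m : r_j \in B^u\}| \geq \tfrac{10}{3} M_m - \tfrac{10 M_m}{N_0} - O(1) \geq 3 M_m
\end{equation*}
for $\lambda$ large enough (so that $N_0$ dominates). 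Simultaneously, $\Xi^u_m$ and $\Xi^s_m$ are each unions of the $(N_i - M_i, M_i)$-systems $\Sigma^{u,s}_i$ for $0 \leq i \leq m$, so \cref{TimeSpentInIntervalSystem} gives
\begin{equation*}
\frac{|\Theta^c_m \cap [0, t)|}{t} \leq 2 \sum_{i=0}^m \Bigl( \frac{M_i}{t} + \frac{M_i}{N_i} \Bigr).
\end{equation*}
The super-exponential growth of the $M_i$'s in \cref{SuperExponentiaGrowthM}, combined with \cref{SuperExponentialRatioMN}, gives $\sum_{i=0}^m M_i = (1 + o(1)) M_m$ and $\sum_{i=0}^m M_i/N_i = o(1)$. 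With $t = 10 M_m$ this bounds $|\Theta^c_m \cap [0, t)|/t$ by about $0.21$, so $|\Theta_m \cap [1, 10M_m]| \geq 7.9 M_m - o(M_m)$. A direct inclusion-exclusion in $[1, 10 M_m]$ then produces
\begin{equation*}
|\Theta_m \cap \{r_j \in B^u\} \cap [1, 10 M_m]| \geq 7.9 M_m + 3 M_m - 10 M_m - o(M_m) > 0,
\end{equation*}
supplying the required $j$.

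The main obstacle is obtaining a lower bound on $|\Theta_m|$ strong enough for the inclusion-exclusion to close: the crude version of \cref{TimeSpentInIntervalSystem} that keeps only the $M_i/N_i$ summands yields at best $|\Theta_m|/t \geq 1/2 - o(1)$, and since this only gives $|\Theta_m|/t + |\{r_j \in B^u\}|/t \geq 1/2 + 1/3 < 1$, the intersection estimate is empty. Retaining the $M_i/t$ terms and exploiting the super-exponential growth of $M_i$ to dominate $\sum_i M_i$ by $M_m$ is essential; the constant $10$ in the window length $10 M_m$ is precisely calibrated so that $\sum_i M_i / t \approx 1/10$, which gives just enough slack for the inclusion-exclusion to produce the desired $j$.
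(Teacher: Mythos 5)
Your proof is correct and follows essentially the same strategy as the paper: the first bound comes from comparing $\prod_{j<t} r_j s_j$ against the two-sided estimates forced by $r_0 - s_0 \geq \lambda^{-7}$ and $r_t - s_t \leq \lambda^2$ (the paper works with $1/(r_j s_j)$ but the arithmetic is identical), and the second conclusion combines that density bound with \cref{TimeSpentInIntervalSystem} and \cref{NextInGoodIfNotInBad} via pigeonhole in the window $[0, 10M_m]$. The only cosmetic difference is that you apply \cref{NextInGoodIfNotInBad} before the inclusion--exclusion, whereas the paper intersects $\{\theta_j \in \Theta_m\}$ with $\{r_j \not\in B^s\}$ first and then argues for two consecutive indices before invoking the lemma; your numerical bookkeeping is in fact more careful than the paper's (which states $\pi \leq \tfrac35$ and relative size $\geq \tfrac{7}{20}$ where $\pi \leq \tfrac23 + o(1)$ and size $\geq \tfrac1{12} - o(1)$ are the correct values, still positive and sufficient).
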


\begin{proof}
First of all, $\lambda^{-2} \leq s_j < r_j \leq \lambda^2$ for every $j \geq 0$. This means that $r_j \in B^s \implies s_j \in B^s$. In particular $r_j \in B^s \implies \lambda^2 \leq  \frac1{r_js_j} \leq \lambda^4$. Set
\begin{align*}
	\pi = \frac{| \{ j : 0 \leq j < t, r_j \in B^s \} |}{t}.
\end{align*}
Then for every $t > 0$
\begin{align*}
|\frac{\lambda^{-7}}{r_0s_0 \cdots r_{t-1}s_{t-1}}| \leq |\frac{1}{r_0s_0 \cdots r_{t-1}s_{t-1}}| \cdot |r_0 - s_0| = |r_t - s_t| \leq \lambda^2,
\end{align*}
and
\begin{align*}
|\frac{1}{r_0s_0 \cdots r_{t-1}s_{t-1}}| \leq \lambda^9.
\end{align*}
If $r_j \not\in B^s$, that is $r_j \in (\lambda^{-1}, \lambda^2]$, then
\begin{align*}
\lambda^{-4} \leq \frac1{r_js_j} \leq \lambda^3.
\end{align*}
Therefore
\begin{align*}
\lambda^{(6\pi - 4)t} = \lambda^{2\pi t} \lambda^{-4(1 - \pi) t} \leq |\frac{1}{r_0s_0 \cdots r_{t-1}s_{t-1}}| \leq \lambda^9 ,
\end{align*}
implying that $(6\pi - 4)t \leq 9$. This yields the inequality
\begin{align*}
\pi \leq \frac{4t + 9}{6t}.
\end{align*}
For the second part, note that $\Theta_m = \T \backslash \Xi$, where $\Xi = \bigcup \limits_{j = 0}^m \Sigma_j$, and $\Sigma_j = \bigcup \limits_{i = -M_j}^{M_j} I_j + i\omega$. Setting $l_j = 2M_j + 1$ and $r_j = N_j - 2M_j - 1$, \cref{TimeSpentInIntervalSystem} applies to $\Xi$, and gives us the bound
\begin{equation*}
\frac{| \{ 0 \leq i < t : \theta_i \in \bigcup \limits_{j = 0}^m \Sigma_j \} |}{t} \leq \sum \limits_{j = 0}^{m} \frac{2M_j + 1}{t} + \frac{2M_j + 1}{N_j}.
\end{equation*}
Using the estimates for $M_j$ and $N_j$ in the beginning of \cref{SecSettingUp}, we see that choosing $t = 10M_m$ will ensure the inequality
\begin{equation*}
\frac{| \{ 0 \leq i < 10M_m : \theta_i \in \bigcup \limits_{j = 0}^m \Sigma_j \} |}{10M_m} \leq \frac14,
\end{equation*}
if $\lambda$ is large enough. For the same $t$, we have that $\pi \leq \frac35$, which means that the intersection between the sets $\{ 0 \leq i < 10M_m : \theta_i \in \Theta_m \}$ and $\{ 0 \leq i < 10M_m : r_i \not\in B^s \}$ has relative size
\begin{equation*}
\frac{|\{ 0 \leq i < 10M_m : \theta_i \in \Theta_m, r_i \not\in B^s \}|}{10M_m} \geq \frac7{20}.
\end{equation*}
Since we can make the measure of $\Xi^u_m \cup \Xi^s_m$ arbitrarily small, by making $\lambda$ larger, there have to be two successive iterates $\theta_i$ and $\theta_{i+1}$ that are both in $\Theta_m$, and such that both $r_i$ and $r_{i+1}$ are not in $B^s$. Therefore, \cref{NextInGoodIfNotInBad} implies that $r_{i+1} \in B^u$, and we are done.

The measure of $\Xi^u_m \cup \Xi^s_m$ can be made arbitrarily small in a uniform manner (the upper bound of the measure can be made independent of $m$), since the measure of the sets $\bigcup \limits_{-M_j}^{M_j} I_j + i\omega$ decreases super-exponentially in $j$.
\end{proof}

In a similar way we obtain the following result.

\begin{lemma}\label{AfterSeparatingBackwardsBackInGood}
Let $\theta_0 \in \T$ and $s_0 < r_0$. If we suppose that $r_0 - s_0 \geq \lambda^{-7}$, then
	\begin{align*}
	\frac{| \{ j : 0 \leq j < t, s_{-j} \in B^u \} |}{t} \leq \frac23 + \frac3{2t}.
	\end{align*}
for every $t > 0$. Moreover, for any $0 \leq m \leq n$, there is a $0 \leq j \leq 10M_m$ such that
\begin{align}
\theta_{-j} \in \Theta_m, s_{-j} \in B^s.
\end{align}
\end{lemma}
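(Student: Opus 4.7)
The proof follows exactly the same pattern as \cref{AfterSeparatingForwardsBackInGood}, simply with time reversed.

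For the frequency bound, I would iterate the distance recurrence \cref{DistanceNextStep} backwards $t$ times to obtain
\[
r_{-t} - s_{-t} = (r_0 - s_0) \prod_{j=1}^{t} r_{-j}\, s_{-j}.
\]
Invariance of $B = [\lambda^{-2}, \lambda^2]$ gives $r_{-t} - s_{-t} \leq \lambda^2$, and combined with the hypothesis $r_0 - s_0 \geq \lambda^{-7}$ this forces $\prod_{j=1}^{t} r_{-j} s_{-j} \leq \lambda^9$. On the other hand, $s_{-j} \in B^u$ implies $\lambda \leq s_{-j} < r_{-j} \leq \lambda^2$, so $r_{-j} s_{-j} \geq \lambda^2$, while for $s_{-j} \notin B^u$ only the invariant bound $r_{-j} s_{-j} \geq \lambda^{-4}$ is available. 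Writing $\pi$ for the relative frequency of indices $j \in [0,t)$ with $s_{-j} \in B^u$ and taking logarithms in the product bound yields $(6\pi - 4)t \leq 9$, i.e.\ $\pi \leq \tfrac{2}{3} + \tfrac{3}{2t}$, which is the first claim.

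For the second statement, I would take $t = 10 M_m$ and apply \cref{TimeSpentInIntervalSystemReversed} to the collection $\Sigma_j = \bigcup_{i=-M_j}^{M_j}(I_j + i\omega)$, $0 \leq j \leq m$, with parameters $l_j = 2M_j+1$ and $r_j = N_j - 2M_j - 1$. The super-exponential growth of $M_j$ and of the ratio $N_j/M_j$ (cf.\ \cref{InitialScaleConstantsRatio,SuperExponentialRatioMN}) ensures that, for $\lambda$ sufficiently large, the density of indices $j \in [0, 10M_m)$ with $\theta_{-j} \notin \Theta_m$ can be made arbitrarily small. Combining this with the first part, the density of $j$ satisfying simultaneously $\theta_{-j} \in \Theta_m$ and $s_{-j} \notin B^u$ is bounded below by (essentially) $1/3$. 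Arguing as in the forward proof, and using again that the total measure of $\Xi^u_m \cup \Xi^s_m$ is driven to zero by enlarging $\lambda$, one then extracts two consecutive such indices $j, j+1 \in [0, 10M_m)$.

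Finally, I would apply the second (backward) implication of \cref{NextInGoodIfNotInBad} with $z_0 = s_{-j}$ and $\theta_0 = \theta_{-j}$: since $\theta_{-j} \in \Theta_m \subset \T \setminus (I_0 + \omega)$ and $s_{-j} \notin B^u$, the lemma yields $s_{-(j+1)} \in B^s$. Together with $\theta_{-(j+1)} \in \Theta_m$ (guaranteed by the choice of a consecutive good pair), this delivers the required index in $[0, 10M_m]$. The main subtle point, exactly as in the forward case, is the passage from a density bound to the existence of two \emph{consecutive} good indices; one cannot rely on the raw $1/3$ density alone, but on the fact that the complement of $\Theta_m$ has vanishing relative measure as $\lambda \to \infty$, which makes the "bad" obstruction negligible and allows a straightforward pigeonhole argument to produce the desired adjacent pair.
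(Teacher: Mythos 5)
Your proof proposal is correct and follows exactly the mirroring strategy the paper intends; the paper provides no written argument for this lemma, just the phrase ``in a similar way we obtain the following result,'' so the real check is whether the forward argument of \cref{AfterSeparatingForwardsBackInGood} transposes cleanly, and yours does: the backward distance product, the cases $s_{-j}\in B^u$ (forcing $r_{-j}\in B^u$ and $r_{-j}s_{-j}\ge\lambda^2$) versus $s_{-j}\notin B^u$ (only $r_{-j}s_{-j}\ge\lambda^{-4}$), the application of \cref{TimeSpentInIntervalSystemReversed} to the systems $\Sigma_j=\bigcup_{i=-M_j}^{M_j}(I_j+i\omega)$, and the final use of the second implication of \cref{NextInGoodIfNotInBad} with $\theta_0=\theta_{-j}\in\Theta_m\subset\T\setminus(I_0+\omega)$ and $z_0=s_{-j}\notin B^u$ are all the right ingredients, in the right order.

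One small imprecision: your recurrence $r_{-t}-s_{-t}=(r_0-s_0)\prod_{j=1}^{t}r_{-j}s_{-j}$ is correct, but this product is indexed over $j\in\{1,\dots,t\}$ while the lemma's frequency counts $j\in\{0,\dots,t-1\}$. The two sets differ in one element each way, so the density changes by at most $1/t$; you therefore get something like $\frac{2}{3}+\frac{5}{2t}$ rather than exactly $\frac{2}{3}+\frac{3}{2t}$ by this route (alternatively, anchoring at $r_1-s_1\ge\lambda^{-11}$ instead of $r_0-s_0$ yields a product over $j\in\{0,\dots,t-1\}$ but a constant $13$ in place of $9$, giving $\frac{2}{3}+\frac{13}{6t}$). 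Either bound is immaterial for the only application (where $t=10M_m$ is large and one simply needs the frequency to stay below some constant strictly less than $1$), and the paper's own forward proof is not careful with the corresponding constants either. You should also be explicit that, in extracting the adjacent pair, what you really need is a good index $j$ (meaning $\theta_{-j}\in\Theta_m$, $s_{-j}\notin B^u$) for which additionally $\theta_{-(j+1)}\in\Theta_m$ — the condition $s_{-(j+1)}\notin B^u$ is not used as a hypothesis, it becomes the conclusion once \cref{NextInGoodIfNotInBad} gives $s_{-(j+1)}\in B^s$ — and that this is possible because the density of $\{j:\theta_{-j}\notin\Theta_m\}$ can be driven below any fixed threshold (such as $1/6$) by enlarging $\lambda$, uniformly in $m$, which is what makes the pigeonhole go through despite the good set only having density slightly above $1/3$.
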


\section{Derivative estimates}\label{SecGrowthFormulas}

Throughout this section, we will assume that $(\theta_0, r_0), (\theta_0, s_0) \in \T \times B$ are such that
\begin{align}
r_j, s_j \in B,
\end{align}
for every $j \in \Z$.
Since orientation is preserved in the fibres, and each fibre takes only strictly positive values,
\begin{align}
0 \leq \Pi_{j, k}(s_0, r_0) \leq \Pi_{j, k}(z_0, r_0) \leq 1 \label{CorrelationIsMonotone}
\end{align}
if $s_0 \leq z_0 \leq r_0$. Moreover
\begin{align}\label{DistanceProductDifferenceReversedQuotient}
\frac{\Pi_j(r_0, s_0)}{\Pi_j(s_0, r_0)} = 1 + (r_j + s_j)(r_{j+1} - s_{j+1})\Pi_j(r_0, s_0).
\end{align}
Since $\deriv r_{k+1} = \lambda v'(\theta) + \frac{\deriv r_k}{r_k^2}$, it follows that
\begin{align*}
\deriv(r_{k+1} - s_{k+1}) &= \frac{\deriv r_k}{r_k^2} - \frac{\deriv s_k}{s_k^2} = \frac{\deriv r_k}{r_ks_k} \frac{s_k}{r_k} - \frac{\deriv s_k}{r_ks_k} \frac{r_k}{s_k} = \\
&= \frac1{r_ks_k} \frac{s_k}{r_k} \deriv(r_k - s_k) - \frac{\deriv s_k}{r_ks_k} \Big(\frac{r_k}{s_k} - \frac{s_k}{r_k} \Big)= \\
&= D_{k}(r_0, s_0) \Pi_k(s_0, r_0) \deriv(r_k - s_k) + \deriv (s_k) (\frac1{r_k} + \frac1{s_k})(r_{k+1} - s_{k+1}).
\end{align*}
It follows by induction that
\begin{align*}
\deriv(r_{k+1} - s_{k+1}) &= D_{0, k}(r_0, s_0) \Pi_{0, k}(s_0, r_0) \deriv(r_0 - s_0) + R_{0, k}(r_0, s_0),
\end{align*}
where the rest term is
\begin{align*}
R_{0, k}(r_0, s_0) &= \deriv (s_k) \cdot (\frac1{r_k} + \frac1{s_k})(r_{k+1} - s_{k+1}) + \sum \limits_{j = 0}^{k-1} D_{j+1, k}(r_0, s_0) \Pi_{j+1, k}(s_0, r_0) \deriv (s_j) (\frac1{r_j} + \frac1{s_j})(r_{j+1} - s_{j+1}) = \\
&= (r_{k+1} - s_{k+1})\Big[ \deriv (s_k) (\frac1{r_k} + \frac1{s_k}) + \sum \limits_{j = 0}^{k-1} \Pi_{j, k}(s_0, r_0) \deriv (s_j) (\frac1{r_j} + \frac1{s_j}) \Big].
\end{align*}
Noting that $D_{0, k}(r_0, s_0) = \frac{r_{k+1} - s_{k+1}}{r_0 - s_0}$, we obtain the expression
\begin{align}
\deriv(r_{k+1} - s_{k+1}) &= (r_{k+1} - s_{k+1})\frac{\deriv(r_0 - s_0)}{r_0 - s_0} \Pi_{0, k}(s_0, r_0) + R_{0, k}(r_0, s_0), \label{ForwardDerivativeDifferenceFormula}
\end{align}
Since every $r_i, s_i \in B = [\lambda^{-2}, \lambda^2]$, and $s_0 \leq r_0$, it satisfies the inequality
\begin{align}
|R_{0, k}(r_0, z_0)| &\leq 2 \lambda^4 \cdot k \cdot \max_{0 \leq j \leq k} |\deriv (s_j)|. \label{ForwardDerivativeRemainderTermUpperBound}
\end{align}
Later on, we will see that this is small in comparison to the first term in \cref{ForwardDerivativeDifferenceFormula}. Before we carry on the analysis, let us consider the implications of this. If we disregard the rest term $R_{0, k}$, we would have
\begin{align*}
\deriv(r_{k+1} - s_{k+1}) &= (r_{k+1} - s_{k+1})\frac{\deriv(r_0 - s_0)}{r_0 - s_0} \Pi_{0, k}(s_0, r_0).
\end{align*}
This is how the derivative at the $k$-th step is related to the initial distance. Since $r_{k+1} - s_{k+1}$ will be related to our stopping time, we may disregard it as essentially constant. The only problem remaining is therefore the factor $\Pi_{0, k}(s_0, r_0)$. If it is not bounded away from 0 as $E \nearrow E_0$, we may lose the constant. Unfortunately, we may not establish a uniform bound. However, for most parameter values, it will be uniformly bounded; and interestingly, for the parameters where the bound fails, the factor $r_0 - s_0$ will be dominant in the limit. First, note that we can rewrite
\begin{align*}
\Pi_j(s_0, r_0) = \frac1{1 + \frac{r_j - s_j}{s_j}}.
\end{align*}
Since $\frac{r_j - s_j}{s_j}$ is always positive, we have
\begin{align*}
\Pi_{0, k}(s_0, r_0) = \exp( - \sum \limits_{j = 0}^k \log(1 + \frac{r_j - s_j}{s_j}) ).
\end{align*}
Therefore, in order to obtain a lower bound for $\Pi_{0, k}(s_0, r_0)$, we need only an upper bound for the expression
\begin{align*}
\sum \limits_{j = 0}^k \log(1 + \frac{r_j - s_j}{s_j}) \leq \sum \limits_{j = 0}^k \frac{r_j - s_j}{s_j} = (r_{k+1} - s_{k+1}) \sum \limits_{j = 0}^k \frac1{D_{j, k}(r_0, s_0) s_j} \leq \lambda^4 \sum \limits_{j = 0}^k \frac1{D_{j, k}(r_0, s_0)}.
\end{align*}
In conclusion, we have the bounds
\begin{align}
\exp( - \lambda^4 \sum \limits_{j = 0}^k \frac1{D_{j, k}(r_0, s_0)} ) \leq \Pi_{0, k}(s_0, r_0) \leq 1.\label{DistortionFactorInequality}
\end{align}
In order to accomplish that, we need some better control on $D_{j, k}(r_0, s_0)$. It turns out that $D_{j, k}$ behaves like a geometric series for most parameters, but can lose the uniformity in the exponent for certain \textit{bad} parameter values.

\section{More derivative estimates}\label{SecAbstractDerivativeEstimates}
Recall that $r_1 = \lambda^2v(\theta_0) - E - \frac1{r_0}$. Therefore

\begin{align*}
\derivE r_1 &= -1 + \frac{\derivE r_0}{r_0^2} \\
\derivE^2 r_1 &= \frac{\derivE^2 r_0}{r_0^2} - 2\frac{(\derivE r_0)^2}{r_0^3} \\
\deriv r_1 &= \lambda^2v'(\theta_0) + \frac{\deriv r_0}{r_0^2} \\
\deriv^2 r_1 &= \lambda^2v''(\theta_0) + \frac{\deriv^2 r_0}{r_0^2} - 2\frac{(\deriv r_0)^2}{r_0^3}
\end{align*}
By induction we obtain the formulas
\begin{align*}
\derivE r_{k+1} &= \frac{\derivE r_0}{r_0^2 \cdots r_k^2} - 1 - \sum \limits_{j=1}^k \frac1{r_j^2 \cdots r_k^2} \\
\derivE^2 r_{k+1} &= \frac{\derivE^2 r_0}{r_0^2 \cdots r_k^2} - 2 \sum \limits_{j=0}^k \frac{(\derivE r_j)^2}{r_j \cdot r_j^2 \cdots r_k^2} \\
\deriv r_{k+1} &= \frac{\deriv r_0}{r_0^2 \cdots r_k^2} + \lambda^2v'(\theta_k) + \lambda^2 \sum \limits_{j=1}^k \frac{v'(\theta_{j-1})}{r_j^2 \cdots r_k^2} \\
\deriv^2 r_{k+1} &= \frac{\deriv^2 r_0}{r_0^2 \cdots r_k^2} - 2 \sum \limits_{j=0}^k \frac{(\deriv r_j)^2}{r_j \cdot r_j^2 \cdots r_k^2} + \lambda^2v''(\theta_k) + \lambda^2 \sum \limits_{j=1}^k \frac{v''(\theta_{j-1})}{r_j^2 \cdots r_k^2}.
\end{align*}
Note that we can rewrite, for $2 \leq j \leq k$,
\begin{align*}
\frac{(\deriv r_j)^2}{r_j \cdot r_j^2 \cdots r_k^2} &= \frac1{r_j \cdot r_j^2 \cdots r_k^2} \Bigg( \frac{\deriv r_0}{r_0^2 \cdots r_{j-1}^2} + \lambda^2v'(\theta_{j-1}) + \lambda^2 \sum \limits_{i=1}^{j-1} \frac{v'(\theta_{i-1})}{r_i^2 \cdots r_{j-1}^2} \Bigg)^2 =\\
&= \Bigg( \frac{\deriv r_0}{r_0^2 \cdots r_{j-1}^2 r_j^{1/2} r_j \cdots r_k} + \frac{\lambda^2v'(\theta_{j-1})}{r_j^{1/2} r_j \cdots r_k} + \lambda^2 \sum \limits_{i=1}^{j-1} \frac{v'(\theta_{i-1})}{r_i^2 \cdots r_{j-1}^2 r_j^{1/2} r_j \cdots r_k} \Bigg)^2.
\end{align*}
For $j = 1$, we have
\begin{align*}
\frac{(\deriv r_1)^2}{r_1 \cdot r_1^2 \cdots r_k^2} &= \frac1{r_1 \cdot r_1^2 \cdots r_k^2} \Bigg( \frac{\deriv r_0}{r_0^2} + \lambda^2v'(\theta_0) \Bigg)^2 =\\
&=\Bigg( \frac{\deriv r_0}{r_0^2 r_1^{1/2} r_1 \cdots r_k} + \frac{\lambda^2v'(\theta_0)}{r_1^{1/2} r_1 \cdots r_k} \Bigg)^2.
\end{align*}
For $j > 1$, we have
\begin{align*}
\frac{(\derivE r_j)^2}{r_j \cdot r_j^2 \cdots r_k^2} &= \frac1{r_j \cdot r_j^2 \cdots r_k^2} \Bigg( \frac{\derivE r_0}{r_0^2 \cdots r_{j-1}^2} - 1 - \sum \limits_{i=1}^{j-1} \frac1{r_i^2 \cdots r_{j-1}^2} \Bigg)^2 =\\
&= \Bigg( \frac{\derivE r_0}{r_0^2 \cdots r_{j-1}^2 r_j^{1/2} r_j \cdots r_k} - \frac1{r_j^{1/2} r_j \cdots r_k} - \sum \limits_{i=1}^{j-1} \frac1{r_i^2 \cdots r_{j-1}^2 r_j^{1/2} r_j \cdots r_k} \Bigg)^2.
\end{align*}

\begin{lemma}\label{AbstractForwardDerivativeBounds}
Suppose that $(\theta_0, r_0)$, $N > 0$ and $c_1, c_2 > 0$ are such that $\lambda^{-c_1/2}, \lambda^{-c_2/2} \leq \frac12$, and
\begin{align*}
\frac1{r_k \cdots r_N} &\leq \lambda^{-c_1(N - k)/2 - 1}, \\
\frac1{r_k^{2} \cdots r_{j-1}^2 r_{j}^{1/2} r_j \cdots r_N} &\leq \lambda^{-c_2(N - k)/2 - 1}, \\
\frac1{r_k^{2} \cdots r_{j-1}^2 r_{N}^{1/2} r_N} &\leq \lambda^{-c_2(N - k)/2 - 3/2},
\end{align*}
for every $0 \leq k \leq j \leq N$. If $|\frac{\deriv^i r_0}{r_0^2 \cdots r_N^2}|, |\frac{\derivE^i r_0}{r_0^2 \cdots r_N^2}| \leq \frac1{\lambda^2}$ for $i=1,2$, and $\lambda$ is large enough, then
\begin{align*}
|\derivE r_{N+1} + 1| &\leq \frac2{\lambda^2} \\
|\derivE^2 r_{N+1}| &\leq \frac{16}{\lambda^{2}} \\
|\deriv r_{N+1} - \lambda^2v'(\theta_N)| &\leq 2\|v\|_{C^1} \\
|\deriv^2 r_{N+1} - \lambda^2v''(\theta_N)| &\leq \frac{16\lambda^2 \|v\|_{C^1}^2}{\lambda^{c_2}} + 8\lambda\|v\|_{C^1}^2 + 2\|v\|_{C^2}.
\end{align*}
\end{lemma}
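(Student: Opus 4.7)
The approach is to substitute directly into the explicit formulas for $\derivE r_{N+1}$, $\derivE^2 r_{N+1}$, $\deriv r_{N+1}$, $\deriv^2 r_{N+1}$ derived just above the lemma. In each of the four target inequalities the stated dominant term (either $-1$ or $\lambda^2 v^{(i)}(\theta_N)$) corresponds to one explicit summand of these formulas, and all other summands are tails that I will bound by combining the smallness hypothesis on $|\deriv^i r_0/(r_0^2\cdots r_N^2)|$ and $|\derivE^i r_0/(r_0^2\cdots r_N^2)|$ with the three product hypotheses and the assumption $\lambda^{-c_1/2},\lambda^{-c_2/2}\le \tfrac12$, which ensures that all arising geometric series converge with ratio $\le\tfrac14$.

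For the first-order bounds I would start by subtracting the dominant terms to obtain
\[
\derivE r_{N+1}+1 \;=\; \tfrac{\derivE r_0}{r_0^2\cdots r_N^2}\;-\;\sum_{j=1}^N \tfrac{1}{r_j^2\cdots r_N^2},\qquad \deriv r_{N+1}-\lambda^2 v'(\theta_N)\;=\;\tfrac{\deriv r_0}{r_0^2\cdots r_N^2}\;+\;\lambda^2\sum_{j=1}^N \tfrac{v'(\theta_{j-1})}{r_j^2\cdots r_N^2}.
\]
The head term is $\le \lambda^{-2}$ by hypothesis. For the tail I square the first product hypothesis to get $1/(r_j^2\cdots r_N^2)\le \lambda^{-c_1(N-j)-2}$; summing in $j$ yields a geometric series that is a universal constant times $\lambda^{-2}$, giving the two first-order estimates (the second picking up an extra factor $\lambda^2\|v\|_{C^1}$).

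For the second-order bounds the essential work is to control $\sum_{j=0}^N (\derivE r_j)^2/(r_j\cdot r_j^2\cdots r_N^2)$ and the corresponding $\deriv$-sum. Here I would invoke the factorisation already packaged above the lemma, which expresses each summand as the square of a three-term sum consisting of an initial-derivative piece, a single $v^{(1)}(\theta_{j-1})$ piece, and an inner sum of $v^{(1)}(\theta_{i-1})$ contributions. Applying $(a+b+c)^2\le 3(a^2+b^2+c^2)$ splits the problem into three geometric estimates, each of which is reduced to the second product hypothesis squared; that hypothesis is designed exactly to control the mixed product $r_k^2\cdots r_{j-1}^2 r_j^{1/2} r_j\cdots r_N$ that appears in the factorisation, while the third hypothesis handles the boundary index $j=N$ where an additional $r_N^{1/2}$ telescopes in and provides the final $\lambda^{-3/2}$. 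The residual tail $\lambda^2\sum_j v''(\theta_{j-1})/(r_j^2\cdots r_N^2)$ in the $\deriv^2$ formula is then treated exactly as in the first-order argument and yields the $2\|v\|_{C^2}$ contribution.

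The main obstacle will be the bookkeeping for the quadratic tail of $\deriv^2 r_{N+1}$: the inner sum $\lambda^2\sum_{i=1}^{j-1}v'(\theta_{i-1})/(r_i^2\cdots r_{j-1}^2 r_j^{1/2} r_j\cdots r_N)$ gets squared and then re-summed over $j$, producing a triple sum that must be re-ordered before the second product hypothesis can be applied termwise. This is the step that produces the $16\lambda^2\|v\|_{C^1}^2/\lambda^{c_2}$ term in the stated bound; the $8\lambda\|v\|_{C^1}^2$ term will come from the cross contribution mixing $\deriv r_0$ with a single $v'(\theta_{j-1})$. Once this bookkeeping is carried out, the analogous (in fact simpler, since no $v'$ appears) argument for $\derivE^2 r_{N+1}$ gives $16/\lambda^2$ immediately, completing the proof.
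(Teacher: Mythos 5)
Your overall strategy is the same as the paper's: substitute into the explicit iteration formulas for $\derivE r_{N+1}$, $\deriv r_{N+1}$ and the corresponding second-order expressions, subtract the dominant summand, and control the remainders by applying the three product hypotheses term by term and summing geometric series; the factorisation of each $(\deriv r_j)^2/(r_j\cdot r_j^2\cdots r_N^2)$ into a (head) $+$ (single $v'$ piece) $+$ (inner sum) inside a square is exactly what the paper uses. Two minor remarks: the paper does not invoke $(a+b+c)^2\le 3(a^2+b^2+c^2)$ nor re-order any triple sum --- it bounds the whole bracket $a+b+c$ by one geometric series and then squares that scalar bound, which is tidier and gives the constants actually stated.

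The concrete error is your attribution of the $8\lambda\|v\|_{C^1}^2$ term. It does \emph{not} arise from a ``cross contribution mixing $\deriv r_0$ with a single $v'(\theta_{j-1})$''. Under the second product hypothesis the $\deriv r_0$ piece of the bracket is $O(\lambda^{-c_2 N/2})$, so both its square and any cross term against it are exponentially small in $N$ and contribute nothing of order $\lambda$. The $O(\lambda)$ term comes entirely from the boundary index $j=N$ of $\sum_{j=0}^N (\deriv r_j)^2/(r_j\cdot r_j^2\cdots r_N^2)$: there the denominator degenerates to $r_N^3$, the freshly injected $\lambda^2 v'(\theta_{N-1})$ summand of $\deriv r_N$ dominates, and dividing by $r_N^{3/2}\ge\lambda^{3/2}$ (which is exactly what the third product hypothesis, with its $r_N^{1/2}r_N$, is there to guarantee) yields $\big(\lambda^2\|v\|_{C^1}/\lambda^{3/2}\big)^2=\lambda\|v\|_{C^1}^2$, and, up to factors of $2$, this is the source of $8\lambda\|v\|_{C^1}^2$. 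You correctly noticed that the third hypothesis targets $j=N$ and produces $\lambda^{-3/2}$, but then disconnected that observation from the constant it generates; carrying out your cross-term plan would leave the $8\lambda\|v\|_{C^1}^2$ unexplained.
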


\begin{proof}
We immediately obtain the estimates
\begin{align*}
|\derivE r_{N+1} + 1| &\leq \frac1{\lambda^{c_1N + 2}} + \sum \limits_{j=1}^N \frac1{\lambda^{c_1(N - j) + 2}} \leq\\
&\leq |\lambda^{-2} \sum \limits_{j=0}^\infty \frac1{\lambda^{c_1j}}| \leq \frac1{\lambda^2(1-\lambda^{-c_1})} \leq  \frac2{\lambda^2}\\
|\deriv r_{N+1} - \lambda^2v'(\theta_k)| &\leq \frac{\lambda^2\|v\|_{C^1}}{\lambda^{c_1N + 2}} + \lambda^2 \sum \limits_{j=1}^N \frac{|v'(\theta_{j-1})|}{\lambda^{c_1(N - j) + 2}} \leq\\
&\leq \|v\|_{C^1} |\sum \limits_{j=0}^\infty \frac1{\lambda^{c_1j}}| \leq 2\|v\|_{C^1}.
\end{align*}
In the same way, we estimate
\begin{align*}
|\frac{(\deriv r_j)^2}{r_j \cdot r_j^2 \cdots r_N^2}| &\leq \Bigg( \frac{\lambda^2\|v\|_{C^1}}{\lambda^{c_2N/2 + 1}} + \frac{\lambda^2 \|v\|_{C^1}}{\lambda^{c_2(N - j)/2 + 1}} + \lambda^2 \|v\|_{C^1} \sum \limits_{i=1}^{j-1} \frac1{\lambda^{c_2(N - i)/2 + 1}} \Bigg)^2 =\\
&= \lambda^2\|v\|_{C^1}^2 \Bigg( \sum \limits_{i=0}^{j} \frac1{\lambda^{c_2(N - i)/2}} \Bigg)^2 =\\
&= \frac{\lambda^2\|v\|_{C^1}^2}{\lambda^{c_2(N-j)}} \Bigg( \sum \limits_{i=0}^{j} \frac1{\lambda^{c_2(j - i)/2}} \Bigg)^2 \leq\\
&\leq \frac{4\lambda^{2}\|v\|_{C^1}^2}{\lambda^{c_2(N - j)}},
\end{align*}
for $2 \leq j \leq N - 1$. If $j = 1$, then
\begin{align*}
|\frac{(\deriv r_1)^2}{r_1 \cdot r_1^2 \cdots r_N^2}| &\leq \Bigg( \frac{\lambda^2\|v\|_{C^1}}{\lambda^{c_2N/2 + 1}} + \frac{\lambda^2v'(\theta_0)}{\lambda^{c_2(N - 1)/2 + 1}} \Bigg)^2 \leq \frac{4\lambda^2\|v\|_{C^1}^2}{\lambda^{c_2(N-1)}},
\end{align*}
and if $j = 0$, then
\begin{align*}
|\frac{(\deriv r_0)^2}{r_0 \cdot r_0^2 \cdots r_N^2}| &= \Bigg( \frac{\deriv r_0}{r_0^{1/2} \cdot r_0 \cdots r_N} \Bigg)^2 \leq \\
&\leq \Bigg( \frac{\lambda^2\|v\|_{C^1}}{\lambda^{c_2N/2 + 1}} \Bigg)^2 \leq \frac{\lambda^2\|v\|_{C^1}^2}{\lambda^{c_2N}}.
\end{align*}
For the case $j = N$, we instead use the estimate
\begin{align*}
|\frac{(\deriv r_N)^2}{r_j \cdot r_j^2 \cdots r_N^2}| &\leq \Bigg( \frac{\lambda^2 \|v\|_{C^1}}{r_0^2 \cdots r_{j-1}^2 r_N^{1/2} r_N} + \frac{\lambda^2v'(\theta_{N-1})}{r_N^{3/2}} + \lambda^2 \sum \limits_{i=1}^{N-1} \frac{v'(\theta_{i-1})}{r_i^2 \cdots r_{N-1}^2 r_N^{3/2}} \Bigg)^2 =\\
&= \lambda^4\|v\|_{C^1}^2 \Bigg( \frac{1}{\lambda^{c_2N + 3/2}} + \frac1{\lambda^{3/2}} + \sum \limits_{i=1}^{N-1} \frac1{\lambda^{c_2(N - i)/2 + 3/2}} \Bigg)^2 = \\
&= 4\lambda\|v\|_{C^1}^2.
\end{align*}
Therefore
\begin{align*}
|\sum \limits_{j=0}^N \frac{(\deriv r_j)^2}{r_j \cdot r_j^2 \cdots r_N^2}| &\leq |\sum \limits_{j=0}^{N-1} \frac{(\deriv r_j)^2}{r_j \cdot r_j^2 \cdots r_N^2}| + |\frac{(\deriv r_N)^2}{r_N \cdot r_N^2}| \leq \\
&\leq \sum \limits_{j=0}^{N-1} \frac{4\lambda^2 \|v\|_{C^1}^2}{\lambda^{c_2(N - j)}} + 4\lambda\|v\|_{C^1}^2 \leq \\
&\leq \frac{8\lambda^2 \|v\|_{C^1}^2}{\lambda^{c_2}} + 4\lambda\|v\|_{C^1}^2.
\end{align*}
This means that
\begin{align*}
|\deriv^2 r_{N+1} - \lambda^2v''(\theta_N)| &\leq \frac{|\deriv^2 r_0|}{r_0^2 \cdots r_N^2} + 2 |\sum \limits_{j=0}^N \frac{(\deriv r_j)^2}{r_j \cdot r_j^2 \cdots r_N^2}| + |\lambda^2 \sum \limits_{j=1}^N \frac{v''(\theta_{j-1})}{r_j^2 \cdots r_N^2}| \leq \\
&\leq \frac{\lambda^2\|v\|_{C^2}}{\lambda^{c_1N + 2}} + 2 |\sum \limits_{j=0}^N \frac{(\deriv r_j)^2}{r_j \cdot r_j^2 \cdots r_N^2}| + |\lambda^2 \sum \limits_{j=1}^N \frac{\|v\|_{C^2}}{\lambda^{c_1(N - j) + 2}}| \leq \\
&\leq \frac{16\lambda^2 \|v\|_{C^1}^2}{\lambda^{c_2}} + 8\lambda\|v\|_{C^1}^2 + 2\|v\|_{C^2}.
\end{align*}
Similarly, for $2 \leq j \leq N$
\begin{align*}
\frac{(\derivE r_j)^2}{r_j \cdot r_j^2 \cdots r_N^2} &= \Bigg( \frac{\derivE r_0}{r_0^2 \cdots r_{j-1}^2 \cdot r_j^{1/2} \cdot r_j \cdots r_N} - \frac1{r_j^{1/2} \cdot r_j \cdots r_N} - \sum \limits_{i=1}^{j-1} \frac1{r_i^2 \cdots r_{j-1}^2 \cdot r_j^{1/2} \cdot r_j \cdots r_N} \Bigg)^2 \leq\\
&\leq \Bigg( \sum \limits_{i=1}^{j} \frac1{\lambda^{c_2(N - i)/2 + 1}} \Bigg)^2 \leq \frac4{\lambda^{c_2(N - j) + 2}},
\end{align*}
and for $j = 1$
\begin{align*}
\frac{(\derivE r_1)^2}{r_1 \cdot r_1^2 \cdots r_N^2} &= \Bigg( - \frac1{r_1^{1/2} \cdot r_1 \cdots r_N} \Bigg)^2 \leq\\
&\leq \frac1{\lambda^{c_2N + 2}},
\end{align*}
Therefore
\begin{align*}
|\derivE^2 r_{N+1}| &\leq 2 \sum \limits_{j=1}^N \frac4{\lambda^{c_2(N - j) + 2}} \leq \frac{16}{\lambda^{2}}.
\end{align*}
\end{proof}

Siilarly, one obtains expressions for the derivatives of backward iterates:
\begin{align}
\derivE r_{-(k+1)} &= (\derivE r_0)r_{-(k+1)}^2 \cdots r_{-1}^2 + \sum \limits_{j=1}^{k+1} r_{-j}^2 \cdots r_{-(k+1)}^2 \\
\derivE^2 r_{-(k+1)} &= (\derivE^2 r_0) r_{-1}^2 \cdots r_{-(k+1)} + 2 \sum \limits_{j = 1}^k \frac{(\derivE r_j)^2}{r_{-j}}r_{-{j +1}}^2 \cdots r_{-(k+1)}^2 + 2 \frac{(\derivE r_{-(k+1)})^2}{r_{-k}} +\\
&+ \sum \limits_{j = 1}^{k+1} r_{-j}^2 \cdots r_{-(k+1)}^2 \\
\deriv r_{-(k+1)} &= (\deriv r_0)r_{-(k+1)}^2 \cdots r_{-1}^2 - \lambda^2 \sum \limits_{j=1}^{k+1} v'(\theta_{-j}) r_{-j}^2 \cdots r_{-(k+1)}^2 \label{DerivBackwardExpression}\\
\deriv^2 r_{-(k+1)} &= (\deriv^2 r_0) r_{-1}^2 \cdots r_{-(k+1)} + 2 \sum \limits_{j = 1}^k \frac{(\deriv r_j)^2}{r_{-j}}r_{-{j +1}}^2 \cdots r_{-(k+1)}^2 + 2 \frac{(\deriv r_{-(k+1)})^2}{r_{-k}} -\\
&- \lambda^2 \sum \limits_{j = 1}^{k+1} v''(\theta_{-j}) r_{-j}^2 \cdots r_{-(k+1)}^2
\end{align}
The proof of the next lemma proceeds analogously to the proof of the previous lemma.
\begin{lemma}\label{AbstractBackwardDerivativeBounds}
Suppose that $(\theta_0, r_0)$, $N > 0$ and $c_1, c_2 > 0$ are such that $\lambda^{-c_1/2}, \lambda^{-c_2/2} \leq \frac12$, and
\begin{align*}
r_{-k} \cdots r_{-N} &\leq \lambda^{-c_1(N - k)/2 - 1}, \\
r_{-k}^{2} \cdots r_{-(j+1)}^2 r_{-j}^{1/2} r_{-j} \cdots r_{-N} &\leq \lambda^{-c_2(N - k)/2 - 1},
r_k^{2} \cdots r_{j-1}^2 r_{N}^{1/2} r_{-N} &\leq \lambda^{-c_2(N - k)/2 - 3/2},
\end{align*}
for every $0 \leq k \leq j \leq N$. If $|\frac{\deriv^i r_0}{r_0^2 \cdots r_{-N}^2}|, |\frac{\derivE^i r_0}{r_0^2 \cdots r_{-N}^2}| \leq \frac1{\lambda^2}$ for $i=1,2$, and $\lambda$ is large enough, then
\begin{align*}
|\derivE r_{-N}| &\leq \frac2{\lambda^2} \\
|\derivE^2 r_{-N}| &\leq \frac{16}{\lambda^{2}} \\
|\deriv r_{-N}| &\leq 2\|v\|_{C^1} \\
|\deriv^2 r_{-N}| &\leq \frac{16\lambda^2 \|v\|_{C^1}^2}{\lambda^{c_2}} + 8\lambda\|v\|_{C^1}^2 + 2\|v\|_{C^2}.
\end{align*}
\end{lemma}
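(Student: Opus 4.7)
The argument would mirror that of Lemma~\ref{AbstractForwardDerivativeBounds} almost verbatim, applied to the explicit backward-iterate identities for $\derivE^i r_{-(k+1)}$ and $\deriv^i r_{-(k+1)}$ displayed immediately before the statement. The correspondence is clean: in the forward formulas, reciprocal products $\frac{1}{r_k^{2}\cdots r_N^{2}}$ play the role that direct products $r_{-k}^{2}\cdots r_{-N}^{2}$ play in the backward ones, and the three product bounds in the hypothesis are precisely the backward analogues of the forward ones. Setting $N=k+1$ in the displayed identities turns every term into either a pure product of $r_{-i}$'s (interior contributions) or a quadratic factor of the form $\frac{(\deriv r_{-j})^{2}}{r_{-j}} r_{-(j-1)}^{2}\cdots r_{-N}^{2}$.

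First I would dispatch the first-order bounds. For $\derivE r_{-N}=(\derivE r_0)\, r_{-1}^{2}\cdots r_{-N}^{2}+\sum_{j=1}^{N} r_{-j}^{2}\cdots r_{-N}^{2}$, the leading term is small by combining the hypothesis on $\derivE r_0/(r_0^{2}\cdots r_{-N}^{2})$ with the first product bound, while the tail is a geometric sum of ratio $\lambda^{-c_1}\leq \tfrac14$, dominated by $\lambda^{-2}(1-\lambda^{-c_1})^{-1}\leq 2\lambda^{-2}$. The same computation applied to the $\deriv r_{-N}$ formula, with the constants $1$ replaced by $\lambda^{2} v'(\theta_{-j})$ inside the summand, yields $|\deriv r_{-N}|\leq 2\|v\|_{C^{1}}$.

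For the second-order quantities, the novel contribution is the quadratic term above (and its $\derivE$ counterpart). As in the forward proof, I would expand $\deriv r_{-j}$ via the first-order identity, square it, and absorb the stray $r_{-j}^{1/2}$ factor using the second product hypothesis for interior $j$ and the third product hypothesis for the endpoint $j=N$. Summing geometrically in $j$ reproduces exactly the structure of the forward bound, contributing $\frac{16\lambda^{2}\|v\|_{C^{1}}^{2}}{\lambda^{c_2}}+8\lambda\|v\|_{C^{1}}^{2}$ to $|\deriv^{2} r_{-N}|$; the remaining $\lambda^{2}\sum v''(\theta_{-j})\, r_{-j}^{2}\cdots r_{-N}^{2}$ term adds at most $2\|v\|_{C^{2}}$ after geometric summation. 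The $|\derivE^{2} r_{-N}|\leq \frac{16}{\lambda^{2}}$ bound follows in the same way but without any $\lambda^{2} v''$ contribution.

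The only real difficulty is bookkeeping: when expanding each square $(\deriv r_{-j})^{2}$ into a sum and then squaring, one must split the available $r_{-i}$-factors in exactly the right way (with one stray half-power, as in the forward case) so that each resulting piece is controlled by one of the three product hypotheses, and so that the endpoint $j=N$ is handled separately by the third. No new geometric or dynamical mechanism is required; the statement is the backward transcription of Lemma~\ref{AbstractForwardDerivativeBounds}, and taking $\lambda$ large enough absorbs the same harmless constants that appear there.
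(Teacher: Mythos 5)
Your proposal takes essentially the same route as the paper, whose entire proof of this lemma is the one-line remark that it proceeds analogously to \cref{AbstractForwardDerivativeBounds}. Your term-by-term transcription of the forward argument onto the backward-iterate identities — leading term from the hypothesis, tails as geometric series in $\lambda^{-c_1}$, quadratic contributions absorbed via the $r^{1/2}$ product bounds with the endpoint $j=N$ treated separately — is exactly what that remark asks you to fill in.
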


\bibliographystyle{alpha}
\bibliography{references}

\end{document}